\numberwithin{equation}{section}
\newtheorem{Theorem}{Theorem}[section]
\newtheorem{Corollary}[Theorem]{Corollary}
\newtheorem{Lemma}[Theorem]{Lemma}
\newtheorem{Proposition}[Theorem]{Proposition}
 { \theoremstyle{definition}
\newtheorem{Definition}[Theorem]{Definition}

 }
\def\R{\mathbb{R} }
\def\E{\mathbb{E} }
\def\Z{\mathbb{Z} }
\def\N{\mathbb{N} }
\def\C{\mathbb{C} }
\def\K{\mathbb{K} }
\DeclareMathOperator{\TKK}{TKK}
\DeclareMathOperator{\ad}{ad}
\DeclareMathOperator{\Inn}{Inn}
\DeclareMathOperator{\End}{End}
\DeclareMathOperator{\bessel}{\mathcal B}
\DeclareMathOperator{\mbessel}{\widetilde{\mathcal B}}
\DeclareMathOperator{\SB}{SB}
\DeclareMathOperator{\B}{\mathbb I}
\DeclareMathOperator{\Fock}{\mc F}
\DeclareMathOperator{\gsh}{\widetilde {\mc H}}
\newcommand{\iSB}{\SB^{-1}}
\newcommand{\bfip}[1]{\langle {#1}\rangle _\mathcal B}
\newcommand{\bfipbar}[1]{\overline{\langle {#1}\rangle }_\mathcal B}
\newcommand{\ip}[1]{\langle {#1}\rangle _W}
\newcommand{\ipbar}[1]{\overline{\langle {#1}\rangle }_W}
\newcommand{\bbf}[1]{\langle {#1}\rangle _\beta}
\newcommand{\pt}[1]{\partial_{#1}}
\newcommand{\mf}[1]{\mathfrak{#1}}
\newcommand{\mc}[1]{\mathcal{#1}}
\begin{document}
\allowdisplaybreaks

\newcommand{\arXivNumber}{2002.12836}

\renewcommand{\PaperNumber}{085}

\FirstPageHeading

\ShortArticleName{A Fock Model and the Segal--Bargmann Transform for $\mathfrak{osp}(m,2|2n)$}

\ArticleName{A Fock Model and the Segal--Bargmann Transform for the Minimal Representation of the Orthosymplectic Lie Superalgebra $\boldsymbol{\mathfrak{osp}(m,2|2n)}$}

\Author{Sigiswald BARBIER, Sam CLAEREBOUT and Hendrik DE BIE}
\AuthorNameForHeading{S.~Barbier, S.~Claerebout and H.~De Bie}
\Address{Department of Electronics and Information Systems, Faculty of Engineering and Architecture,\\ Ghent University, Krijgslaan 281, 9000 Gent, Belgium}
\Email{\href{mailto:Sigiswald.Barbier@UGent.be}{Sigiswald.Barbier@UGent.be}, \href{mailto:Sam.Claerebout@UGent.be}{Sam.Claerebout@UGent.be}, \href{mailto:Hendrik.DeBie@UGent.be}{Hendrik.DeBie@UGent.be}}

\ArticleDates{Received March 17, 2020, in final form August 12, 2020; Published online August 26, 2020}

\Abstract{The minimal representation of a semisimple Lie group is a `small' infinite-di\-mensional irreducible unitary representation. It is thought to correspond to the minimal nilpotent coadjoint orbit in Kirillov's orbit philosophy. The Segal--Bargmann transform is an intertwining integral transformation between two different models of the minimal representation for Hermitian Lie groups of tube type. In this paper we construct a Fock model for the minimal representation of the orthosymplectic Lie superalgebra $\mathfrak{osp}(m,2|2n)$. We also construct an integral transform which intertwines the Schr\"odinger model for the minimal representation of the orthosymplectic Lie superalgebra $\mathfrak{osp}(m,2|2n)$ with this new Fock model.}

\Keywords{Segal--Bargmann transform; Fock model; Schr\"odinger model; minimal representations; Lie superalgebras; spherical harmonics; Bessel--Fischer product}

\Classification{17B10; 17B60; 22E46; 58C50}

\section{Introduction}

The classical Segal--Bargmann transform
\begin{align*}
\SB f(z) := \exp\left(-\tfrac{1}{2} z^2\right) \int_{{\mathbb R}^m} \exp(2 x \cdot z) \exp\big({-}x^2\big)f(x) \mathrm{d} x
\end{align*}	
is a unitary isomorphism from the space of square integrable functions on $\mathbb{R}^m$ to the Fock space of entire functions on $\mathbb{C}^m$ which are square integrable which respect to the weight function $\exp\big({-}|z|^2\big)$.
The Segal--Bargmann transform is defined in such a way that it maps the creation (resp.\ anihilation) operators on the Schr\"odinger space to coordinate multiplication (resp. differentiation) on the Fock space. This implies in particular that the harmonic oscillator becomes the much simpler Euler operator on the Fock space~\cite{Bargmann}. The Segal--Bargmann transform can also be interpreted as an intertwining operator between two models of the metaplectic representation (also known as the Segal--Shale--Weil or oscillator representation) of the metaplectic group, a double cover of the symplectic group. See~\cite{Folland} for more on the classical Segal--Bargmann transform and the metaplectic representation.

The (even part of the) metaplectic representation is a prominent example of a minimal representation. Another extensively studied example is the minimal representation of ${\rm O}(p,q)$ \cite{KO1,KO2,KO3,KM}. The minimal representation of a semisimple Lie group is the irreducible unitary representation that according to the orbit method should correspond to the minimal nilpotent coadjoint orbit~\cite{GanSavin}. The Segal--Bargmann transform has been generalized to this setting of minimal representations. Namely for a Hermitian Lie group of tube type, there exists an explicit integral transform which intertwines the Schr\"odinger and Fock model of the minimal representation~\cite{HKMO}.

Lie superalgebras and Lie supergroups are generalisations of Lie algebras and Lie groups. They were introduced to mathematically describe supersymmetry. Their representation theory is an active field of study with still a lot of open questions, for instance a description of the unitary irreducible representations. Since most ingredients of the orbit method still exist in the super setting, it is believed that the orbit method should also in the super setting be a good tool for the study of irreducible representations \cite[Chapter~6.3]{Kirillov}. For example, the irreducible unitary representations of nilpotent Lie supergroups have been classified that way~\cite{NeebSalmasian, Salmasian}.

An ambitious aim in this light would be to construct minimal representations and the intertwining Segal--Bargmann transform for Lie supergroups. Recently a first step in that direction has already been taken. Namely the construction of a minimal representation of the orthosymplectic Lie supergroup ${\rm OSp}(p,q|2n)$ was accomplished in~\cite{BF}. In the bosonic case (i.e., $n=0$) this realisation corresponds to the Schr\"odinger model for ${\rm O}(p,q)$ of~\cite{KM}.

\looseness=1 In this article we achieve two further goals. First we construct a Fock model for the minimal representation of the Lie superalgebra $\mathfrak{osp}(m,2|2n)$. We also define an integral transform which intertwines the Schr\"odinger model of the minimal representation of $\mathfrak{osp}(m,2|2n)$ with this Fock model. Note that only for $q=2$ the Lie group ${\rm O}(p,q)$ is Hermitian of tube type and thus only in that case do we have a Segal--Bargmann transform. For that reason we have only constructed a Segal--Bargmann transform in the super setting for $\mathfrak{osp}(m,2|2n)$. Our main results hold for $m-2n \geq 4$. This restriction comes from \cite{BF}, where key properties of the integral we use in the definition of the Segal--Bargmann transform are only proven for the case \mbox{$m-2n \geq 4$}.

We will work in this paper always on the algebraic level. So we will work with representations of the Lie superalgebra $\mathfrak{osp}(m,2|2n)$ instead of the Lie supergroup ${\rm OSp}(m,2|2n)$, and we will act on super-vector spaces defined using superpolynomials instead of using global sections of a~supermanifold. This allows us to circumvent the delicate technicalities associated with supergroups and supermanifolds. Note that in the bosonic case, the spaces we work with are dense in certain Hilbert spaces. Using standard techniques one can then integrate the representation to group level and extend to the whole Hilbert space, see for example \cite[Theorem~2.30]{HKM} or \cite[Theorem~2.17]{HKMO}. These techniques no longer work/exist in the super case. There does exist an abstract way to integrate a so-called admissible $(\mathfrak{g},K)$-module to group level \cite{Alldridge} which was for example used in~\cite{BF} to integrate the minimal representation of $\mathfrak{osp}(p,q|2n)$ to ${\rm OSp}(p,q|2n)$. However, this approach is not very concrete.

Explicit examples such as the one constructed in this paper could help to develop such integration tools and to find the correct definitions in the super setting. For example our representations ought to be `unitary' in some sense. A definition of unitary representations does exists in the supersetting \cite[Definition~2]{CCTV}. However, a large class of Lie superalgebras, including $\mathfrak{osp}(p,q|2n)$, do not allow for any super unitary representation in this sense \cite[Theorem~6.2.1]{NeebSalmasian}. This highly unsatisfactory situation has inspired the search for a new or extended definition of a~unitary representation~\cite{dGM, Tuynman}. At the moment it is still unclear what the right definition should be, but we believe that the construction of explicit examples which ought to be `unitary' could be useful for this endeavour.

\subsection{Structure of the paper}
We structure the paper as follows. In Section~\ref{Preliminaries and notations} we fix notations and introduce the spaces and algebras we will use throughout the paper. In Section~\ref{The Schrodinger representation} we recall the Schr\"odinger model of the minimal representation of $\mathfrak{osp}(m,2|2n)$ defined in~\cite{BF}. We also introduce an integral which leads to an $\mathfrak{osp}(m,2|2n)$-invariant, non-degenerate, superhermitian form.

The next three sections contain the main body of this paper. In Section~\ref{The Fock space} we construct the Fock space as a quotient of the space of superpolynomials. We then define the Bessel--Fischer product, which gives us a non-degenerate, superhermitian form on our Fock space (Propositions~\ref{supersym} and~\ref{nondeg}).
In the bosonic case $(n=0)$, this Bessel--Fischer product is equivalent to the inner product coming from an integral on the Fock space \cite[Proposition 2.6]{HKMO}. Since we do no longer have this integral in the super setting, we construct a direct proof for the superhermitian property. This seems new even in the bosonic case.
We also show that our Fock space has a~reproducing kernel (Theorem \ref{Theorem repr kernel}).

In Section~\ref{The Fock model} we endow this Fock space with an $\mathfrak{osp}(m,2|2n)$-module structure leading to a~Fock model of the minimal representation of $\mathfrak{osp}(m,2|2n)$. We prove that this is an irreducible representation and obtain a~very explicit description (Theorem \ref{ThDecF}). In particular, we have complete branching rules for the subalgebras $\mathfrak{osp}(m|2n)$ and $\mathfrak{osp}(m-1|2n)$.

In Section \ref{The Segal--Bargmann transform}, we define an integral transform which maps the space of functions used in the Schr\"odinger realisation to the space of functions of the Fock realisation (Definition \ref{DefSB}). We show that this integral is an intertwining isomorphism which preserves the superhermitian form (Theorems \ref{ThIP} and \ref{PropUnitSB}). We also give an explicit inverse (Definition \ref{Def SB inv}).
As an application we use the Segal--Bargmann transform to define generalised Hermite functions.

In Appendix \ref{Special functions} we gather some definitions and results on special functions which are used throughout the paper. We have also put the technical and lengthy proof of Theorem \ref{ThIP} in Appendix \ref{AppIP}.

\section{Preliminaries and notations}
\label{Preliminaries and notations}
In this paper Jordan and Lie algebras will be defined over complex numbers $\C$ if they have a~$\C$ in subscript, otherwise they are defined over the field of real numbers $\mathbb R$. Function spaces will always be defined over the complex field~$\C$. We use the convention $\mathbb N = \{0,1,2,\ldots\}$.

A super-vector space is defined as a $\Z_2$-graded vector space, i.e., $V=V_{0}\oplus V_{1}$, with~$V_{0}$ and~$V_{1}$ vector spaces. An element $v$ of a super-vector space $V$ is called homogeneous if it belongs to~$V_i$, $i\in \Z_2$. We call~$i$ the parity of $v$ and denote it by $|v|$. An homogeneous element $v$ is even if $|v|=0$ and odd if $|v|=1$. When we use $|v|$ in a formula, we are considering homogeneous elements, with the implicit convention that the formula has to be extended linearly for arbitrary elements. We denote the super-vector space~$V$ with $V_{0} = \mathbb R^m$ and $V_{1} = \mathbb R^n$ as $\mathbb R^{m|n}$. We will always assume $m\geq 2$.

\subsection{Superpolynomials}

Let $\mathbb K$ be either $\mathbb R$ or $\C$.
\begin{Definition}
The \textit{space of superpolynomials} over $\mathbb K$ is defined as
\begin{gather*}
\mathcal P\big(\mathbb K^{m|2n}\big):=\mathcal P \big(\mathbb K^{m}\big)\otimes_\C\Lambda\big(\mathbb K^{2n}\big),
\end{gather*}
where $\mc P\big(\mathbb K^m\big)$ denotes the space of complex-valued polynomials over the field $\mathbb K$ in $m$ variables and $\Lambda\big(\mathbb K^{2n}\big)$ denotes the Grassmann algebra in $2n$ variables. The variables of $\mc P(\mathbb K^m)$ and $\Lambda\big(\mathbb K^{2n}\big)$ are called even and odd variables, respectively. They satisfy the commutation relations
\begin{gather*}
x_ix_j = (-1)^{|i||j|}x_j x_i,
\end{gather*}
where $|i| := |x_i|$, $i\in\{0, 1, \ldots, m+2n-1\}$.
\end{Definition}

Let $\bbf{\cdot\,,\cdot}$ be a supersymmetric, non-degenerate, even bilinear form on $\mathbb K^{m|2n}$ with basis $\{x_i\}_{i=0}^{m+2n-1}$. We denote the matrix components by $\beta_{ij} := \bbf{x_i,x_j}$ and denote the components of the inverse matrix by $\beta^{ij}$, i.e., $\beta^{ij}$ is defined such that $\sum_j \beta_{ij}\beta^{jk}=\delta_{ik}$. Set $x^j = \sum_i x_i \beta^{ij}$.
The differential operator $\partial^i$ is defined as the unique derivation in $\End\big(\mc P\big(\mathbb K^{m|2n}\big)\big)$ such that $\partial^i(x_j) = \delta_{ij}$, with $\delta_{ij}$ the Kronecker delta.
We also define $\pt {j} = \sum_i \partial^i \beta_{ji}$. Then it holds that $\pt i(x^j) = \delta_{ij}$.

When we are working with both real and complex polynomials at the same time, we will denote $\pt i$ and $\partial^i$ for the real variable $x_i$ as $\pt {x^i}$ and $\pt{x_i}$, respectively. Similarly, we will denote~$\pt i$ and~$\partial^i$ for the complex variable $z_i$ as $\pt {z^i}$ and $\pt{z_i}$, respectively.

We will make frequent use of the following operators:
\begin{gather}\label{EqR2ELap}
R^2 := \sum_{i,j} \beta^{ij}x_ix_j, \qquad \E := \sum_{i,j} \beta^{ij} x_i\pt j \qquad \mbox{and} \qquad \Delta := \sum_{ij}\beta^{ij}\pt i \pt j.
\end{gather}
Here, the operator $R^2$ is called the square of the radial coordinate and acts through multiplication. The operators $\E$ and $\Delta$ are called the Euler operator and the Laplacian, respectively. We have the following lemma.

\begin{Lemma}\label{Lemsltriple}
The operators $R^2$, $\E$ and $\Delta$ satisfy
\begin{gather*}
[\Delta, R^2] = 4\E+2M, \qquad [\Delta, \E] = 2\Delta, \qquad [R^2,\E] = -2R^2,
\end{gather*}
where $M=m-2n$ is the superdimension. In particular, $\big(R^2,\E+\frac{M}{2}, -\frac{\Delta}{2}\big)$ forms an $\mf{sl}_\K(2)$-triple.
Furthermore, they commute in $\End\big(\mc P\big(\K^{m|2n}\big)\big)$ with the operators \[ L_{ij} := x_i\pt j - (-1)^{|i||j|}x_j\pt i. \]
\end{Lemma}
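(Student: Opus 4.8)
The plan is to reduce every identity to the fundamental (super)commutation relations between the generators and then expand, keeping track of signs. Writing $[A,B]:=AB-(-1)^{|A||B|}BA$ for the supercommutator and letting $x_j$ denote multiplication, the defining property $\partial^i(x_j)=\delta_{ij}$ together with the super-Leibniz rule gives $[\partial^i,x_j]=\delta_{ij}$, while $[x_i,x_j]=0$ and $[\partial^i,\partial^j]=0$. Contracting with $\beta$ and using $\pt j=\sum_k\partial^k\beta_{jk}$ and $x^j=\sum_k x_k\beta^{kj}$, these become $[\pt i,x_j]=\beta_{ij}$, $[\pt i,x^j]=\delta_{ij}$ and $[\partial^i,x^j]=\beta^{ij}$, together with $[\pt i,\pt j]=0=[x_i,x_j]$. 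I would record once the two consequences of the hypotheses on $\bbf{\cdot\,,\cdot}$ that drive all the sign bookkeeping: evenness forces $\beta_{ij}=0$ unless $|i|=|j|$, and supersymmetry gives $\beta_{ij}=(-1)^{|i||j|}\beta_{ji}$ (and likewise for $\beta^{ij}$).

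Next I would note that $\E=\sum_i x_i\partial^i$ is exactly the total-degree operator on $\mc P(\K^{m|2n})$, so $[\E,T]=dT$ for any operator $T$ homogeneous of degree $d$ in the Euler grading. Since $R^2$ raises degree by $2$ and $\Delta$ lowers it by $2$, this yields $[R^2,\E]=-2R^2$ and $[\Delta,\E]=2\Delta$ with no further computation. The one genuinely computational identity is $[\Delta,R^2]$: I would substitute $\Delta=\sum_{ab}\beta^{ab}\pt a\pt b$ and $R^2=\sum_{cd}\beta^{cd}x_cx_d$ and commute the two derivatives through the two multiplications using the relations above. The uncontracted terms cancel against $R^2\Delta$, the single-contraction terms reassemble (after using the supersymmetry of $\beta$) into $4\E$, and the double-contraction terms collapse to a scalar.

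That scalar is the crux of the lemma. The double contraction pairs two copies of $\beta$ in a combination of the form $\sum_{ij}\beta^{ij}\beta_{ij}$, and here the supersymmetry sign $\beta_{ij}=(-1)^{|i||j|}\beta_{ji}$ turns this into the supertrace $\sum_i(-1)^{|i|}=m-2n=M$ rather than the naive $m+2n$; this is precisely why the constant is $2M$ and not $2(m+2n)$, and seeing that each odd variable contributes $-2$ is the main obstacle. The quickest way to confirm the sign is a sanity check on the purely odd part, where $\Delta$, $R^2$, $\E$ reduce to fermionic operators and the constant comes out as $2(-2n)$.

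With the three commutators established, the $\mf{sl}_\K(2)$ statement is immediate: rewriting them with $e=R^2$, $h=\E+\tfrac M2$, $f=-\tfrac\Delta2$ gives $[h,e]=2e$, $[h,f]=-2f$ and $[e,f]=\tfrac12[\Delta,R^2]=2\E+M$, so the three operators span a copy of $\mf{sl}_\K(2)$. Finally, since each $L_{ij}$ preserves the Euler degree, $[\E,L_{ij}]=0$ is automatic; for $[R^2,L_{ij}]$ and $[\Delta,L_{ij}]$ I would expand directly using $[\pt j,x_k]=\beta_{jk}$ and its variants and check that the terms cancel in pairs. Here the cancellation is forced by the antisymmetrisation $x_i\pt j-(-1)^{|i||j|}x_j\pt i$ built into $L_{ij}$ together with $\beta_{ij}=(-1)^{|i||j|}\beta_{ji}$, so once more the only real work is the sign bookkeeping and no new idea is needed.
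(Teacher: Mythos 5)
Your proposal is correct and is essentially the paper's own approach: the paper disposes of this lemma with ``a straightforward calculation or see \cite{DS}'', and your argument is precisely that calculation carried out and organized sensibly (Euler-grading argument for $[\Delta,\E]$ and $[R^2,\E]$, direct expansion for $[\Delta,R^2]$ with the supertrace $\sum_i(-1)^{|i|}=m-2n$ producing the constant $2M$, and degree preservation plus $\beta$-supersymmetry for the commutation with $L_{ij}$). The only cosmetic remark is that in the $\mf{sl}_\K(2)$ verification you should note explicitly that $[e,f]=2\E+M=2h$, which is what closes the triple.
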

\begin{proof}A straightforward calculation or see, for example, \cite{DS}.
\end{proof}

If we are working with two sets of variables we will add a variable indicator to avoid confusion. For example, we denote
\begin{gather*}
R^2_x = \sum_{i,j} \beta^{ij}x_ix_j, \qquad \E_x = \sum_{i,j} \beta^{ij} x_i\pt {x^j}, \qquad \Delta_x = \sum_{ij}\beta^{ij}\pt {x^i} \pt {x^j}
\end{gather*}
and $L_{ij}^x = x_i\pt {x^j} - (-1)^{|i||j|}x_j\pt {x^i}$ for the real variables and
\begin{gather*}
R^2_z = \sum_{i,j} \beta^{ij}z_iz_j, \qquad \E_z = \sum_{i,j} \beta^{ij} z_i\pt {z^j}, \qquad \Delta_z = \sum_{ij}\beta^{ij}\pt {z^i} \pt {z^j}
\end{gather*}
and $L^z_{ij} = z_i\pt {z^j} - (-1)^{|i||j|}z_j\pt {z^i}$ for the complex variables.

\subsection{The orthosymplectic Lie superalgebra}

Let $\mathbb K$ be either $\mathbb R$ or $\C$.
\begin{Definition}
The \textit{orthosymplectic Lie superalgebra} $\mf{osp}_{\mathbb K}(m|2n,\beta)$ is defined as the subalgebra of $\mf{gl}_{\mathbb K}(m|2n)$ preserving a supersymmetric non-degen\-erate even bilinear form $\beta$. Thus $\mf{osp}_{\mathbb K}(m|2n,\beta)$ is spanned by $X \in \mf{gl}_{\mathbb K}(m|2n) $ such that
\begin{gather*}
\bbf{X(u),v}+(-1)^{|u||X|}\bbf{u,X(v)} = 0,
\end{gather*}
for all $u,v \in \K^{m|2n}$.
\end{Definition}
The orthosymplectic Lie superalgebra has a differential operator realisation on $\mc P\big(\K^{m|2n}\big)$. A~basis in this realisation is given by
\begin{gather*}
L_{ij} := x_i\pt j - (-1)^{|i||j|}x_j\pt i, \qquad \mbox{for } i<j,\\
L_{ii} := 2x_i\pt i, \qquad \mbox{for } |i| = 1.
\end{gather*}

\subsection{Spherical harmonics}

Let $\mathbb K$ be either $\mathbb R$ or $\C$.
The space of homogeneous superpolynomials of degree $k$ is denoted by
\begin{gather*}
\mc P_k\big(\K^{m|2n}\big) := \big\{p\in\mc P\big(\K^{m|2n}\big)\colon \E p = k p\big\}.
\end{gather*}
The space of spherical harmonics of degree $k$ is defined by
\begin{gather*}
\mc H_k\big(\K^{m|2n}\big) := \big\{ f\in \mc P_k\big(\K^{m|2n}\big)\colon \Delta f = 0\big\},
\end{gather*}
i.e., it is the space of homogeneous polynomials of degree $k$ which are in the kernel of the Laplacian.

 The Fischer decomposition gives a decomposition of the space of superpolynomials using these spherical harmonics \cite[Theorem~3]{DS}.
\begin{Proposition}\label{LemFC}
If $m-2n\neq -2\N$, then $\mc P\big(\K^{m|2n}\big)$ decomposes as
\begin{gather*}
\mc P\big(\K^{m|2n}\big) = \bigoplus_{k=0}^\infty \mc P_k\big(\K^{m|2n}\big) = \bigoplus_{k=0}^\infty\bigoplus_{j=0}^\infty R^{2j}\mc H_k\big(\K^{m|2n}\big).
\end{gather*}
\end{Proposition}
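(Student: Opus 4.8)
The plan is to deduce everything from the $\mf{sl}_\K(2)$-structure of Lemma~\ref{Lemsltriple}. The first equality is just the grading of $\mc P(\K^{m|2n})$ by homogeneous degree: every monomial is an eigenvector of $\E$ with eigenvalue its total degree, so $\mc P(\K^{m|2n}) = \bigoplus_k \mc P_k(\K^{m|2n})$. Since $R^2$ raises degree by $2$, we have $R^{2j}\mc H_k(\K^{m|2n}) \subseteq \mc P_{k+2j}(\K^{m|2n})$, so it suffices to prove, for each fixed degree $d$, the finite decomposition $\mc P_d(\K^{m|2n}) = \bigoplus_{0\le 2j\le d} R^{2j}\mc H_{d-2j}(\K^{m|2n})$, and then sum over $d$.

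The engine of the whole argument is a single commutator formula. Using the relations $[\Delta,R^2]=4\E+2M$ and $[R^2,\E]=-2R^2$, an easy induction on $j$ gives
\begin{gather*}
[\Delta,R^{2j}] = 2j\, R^{2(j-1)}\big(2\E+M+2(j-1)\big).
\end{gather*}
Applying this to $h\in\mc H_k(\K^{m|2n})$ (so $\Delta h=0$ and $\E h = kh$) yields $\Delta R^{2j}h = c_{j,k}\,R^{2(j-1)}h$ with $c_{j,k} := 2j(2k+M+2j-2)$. The crucial observation is that, for $j\ge 1$ and $k\ge 0$, the scalar $c_{j,k}$ vanishes exactly when $M=-2(k+j-1)\in -2\N$; hence the hypothesis $M=m-2n\notin -2\N$ guarantees $c_{j,k}\neq 0$ for every relevant $j$ and $k$. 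The hard part is really just this bookkeeping: one must check that the non-vanishing of all the constants that appear lines up precisely with the stated restriction on $M$. Everything else is formal.

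For directness, I would suppose $\sum_{j} R^{2j} h_{d-2j}=0$ with $h_{d-2j}\in\mc H_{d-2j}(\K^{m|2n})$, and let $j_0$ be the largest index with $h_{d-2j_0}\neq 0$. Iterating the formula above gives $\Delta^{j_0}R^{2j}h = 0$ for every $j<j_0$, while $\Delta^{j_0}R^{2j_0}h_{d-2j_0}=\big(\prod_{l=1}^{j_0} c_{l,\,d-2j_0}\big)h_{d-2j_0}$. Applying $\Delta^{j_0}$ to the relation thus forces this last expression to vanish; since every factor $c_{l,\,d-2j_0}$ is nonzero (here $d-2j_0+l-1\ge 0$), we conclude $h_{d-2j_0}= 0$, a contradiction. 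Hence the sum is direct.

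For spanning I would induct on $d$, the cases $d=0,1$ being immediate because there $\Delta$ lowers degree below $0$ and every homogeneous polynomial is automatically harmonic. For the inductive step, observe that by the induction hypothesis $\mc P_{d-2}(\K^{m|2n})$ already decomposes into summands $R^{2j}\mc H_k(\K^{m|2n})$ with $k+2j=d-2$, and on each such summand $\Delta R^2$ acts as the scalar $c_{j+1,k}=2(j+1)(2k+M+2j)$, which is nonzero since $k+j\ge 0$ and $M\notin -2\N$. Therefore $\Delta R^2$ is invertible on $\mc P_{d-2}(\K^{m|2n})$. Given $p\in\mc P_d(\K^{m|2n})$, I set $q := (\Delta R^2)^{-1}(\Delta p)\in\mc P_{d-2}(\K^{m|2n})$ and $h := p - R^2 q$; then $\Delta h = \Delta p - \Delta R^2 q = 0$, so $h\in\mc H_d(\K^{m|2n})$. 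Applying the induction hypothesis to $q$ expresses $R^2 q$ as a sum of terms in the spaces $R^{2(j+1)}\mc H_{d-2-2j}(\K^{m|2n})$, so $p = h + R^2 q$ lies in $\sum_j R^{2j}\mc H_{d-2j}(\K^{m|2n})$. Combined with directness, this gives the claimed decomposition.
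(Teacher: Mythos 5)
Your proof is correct. Note, however, that the paper does not prove this statement at all: it is quoted as \cite[Theorem~3]{DS} (the Fischer decomposition of De Bie--Sommen), so your self-contained argument is genuinely different in character from what the paper offers. Your route is the standard $\mf{sl}_\K(2)$ argument, and all the key steps check out: the induction $[\Delta,R^{2j}] = 2j\,R^{2(j-1)}\big(2\E+M+2(j-1)\big)$ follows correctly from $[\Delta,R^2]=4\E+2M$ and $[\E,R^2]=2R^2$; the scalars $c_{j,k}=2j(2k+M+2j-2)$ vanish precisely when $M=-2(k+j-1)\in-2\N$, so the hypothesis is used exactly where it must be; the directness argument via applying $\Delta^{j_0}$ to a putative relation is sound (terms with $j<j_0$ die because the harmonic factor is eventually hit by a $\Delta$, terms with $j>j_0$ vanish by maximality); and the spanning induction correctly exploits that $\Delta R^2$ acts by the nonzero scalar $c_{j+1,k}$ on each summand $R^{2j}\mc H_k\big(\K^{m|2n}\big)$ of $\mc P_{d-2}\big(\K^{m|2n}\big)$, hence is invertible there, which makes $h:=p-R^2(\Delta R^2)^{-1}\Delta p$ harmonic. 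What your approach buys is transparency about the role of the condition $M\notin-2\N$ (it is exactly the non-vanishing of the constants $c_{j,k}$), and it keeps the paper self-contained; what the citation buys the authors is brevity and consistency, since \cite{DS} is also the source of their dimension formulas (Proposition~\ref{PropDimH}) and the generalised version (Proposition~\ref{PropFDgen}) covering the excluded case $M\in-2\N$, which your argument, by design, cannot reach.
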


In \cite{LS} the following generalisation of the Fischer decomposition was obtained that still holds for the exceptional case $M\in -2\N$.
\begin{Proposition}[generalised Fischer decomposition]\label{PropFDgen}
The superspace $\mc P\big(\K^{m|2n}\big)$ decomposes as
\begin{gather*}
\mc P\big(\K^{m|2n}\big) = \bigoplus_{k=0}^\infty \mc P_k\big(\K^{m|2n}\big) = \bigoplus_{k=0}^\infty\bigoplus_{j=0}^\infty \big(R^{2}\Delta R^{2}\big)^j\gsh_k\big(\K^{m|2n}\big),
\end{gather*}
where
\begin{gather*}
\gsh_k\big(\K^{m|2n}\big) = \big\{ f\in \mc P_k\big(\K^{m|2n}\big)\colon \Delta R^2\Delta f = 0\big\}
\end{gather*}
is the space of generalised spherical harmonics of degree~$k$.
\end{Proposition}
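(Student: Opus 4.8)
The first equality is immediate: $\mc P_k$ is by definition the $k$-eigenspace of the Euler operator $\E$, and $\E$ multiplies each monomial by its total degree, so it is diagonalisable with spectrum $\N$ and $\mc P\big(\K^{m|2n}\big)=\bigoplus_k\mc P_k$. Each $\mc P_k$ is finite-dimensional. Since $R^2$ raises degree by $2$ and $\Delta$ lowers it by $2$, the operator $R^2\Delta R^2$ raises degree by $2$, so $\big(R^2\Delta R^2\big)^j\gsh_{k-2j}\subseteq\mc P_k$, and the asserted decomposition is equivalent, after iterating on the degree and summing, to the single-step splitting
\[
\mc P_k=\gsh_k\oplus\big(R^2\Delta R^2\big)\mc P_{k-2}
\]
for every $k$. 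I will prove this splitting.

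The engine is the $\mf{sl}(2)$-triple of Lemma~\ref{Lemsltriple}. On $\mc P_k$ the two degree-preserving compositions are related by $\Delta R^2=R^2\Delta+(4k+2M)\,\mathrm{Id}$, coming from $[\Delta,R^2]=4\E+2M$, so it suffices to understand the single operator $P:=R^2\Delta\in\End(\mc P_k)$. The key computation along an $\mf{sl}(2)$-string through a genuine spherical harmonic is
\[
\Delta R^{2s}h=2s\,(M+2j+2s-2)\,R^{2s-2}h,\qquad h\in\mc H_j,
\]
whence $P$ acts on $R^{2s}\mc H_{k-2s}$ as the scalar $c_s=2s(M+2k-2s-2)$. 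When $M\notin-2\N$ all scalars with $s\ge1$ are nonzero, $P$ is diagonalisable with $\ker P=\mc H_k=\gsh_k$, and the splitting is precisely the ordinary Fischer decomposition; there the statement already follows from Proposition~\ref{LemFC}.

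The real work is the exceptional range $M\in-2\N$. Here some $c_s$ vanish, the $\mf{sl}(2)$-strings break, $\mc H_k$ becomes deficient, and $P=R^2\Delta$ is no longer semisimple but acquires nilpotent Jordan blocks at the eigenvalue $0$. I would apply the Fitting decomposition of $P$ on the finite-dimensional space $\mc P_k$, writing $\mc P_k=V_0\oplus V_1$ with $V_0=\bigcup_n\ker P^n$ (where $P$ is nilpotent) and $V_1=\bigcap_n\mathrm{im}\,P^n$ (where $P$ is invertible), and then identify $V_0=\gsh_k$ and $V_1=\big(R^2\Delta R^2\big)\mc P_{k-2}$. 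One direction of each identification is easy: if $\Delta R^2\Delta p=0$ then $P^2p=R^2\big(\Delta R^2\Delta p\big)=0$, so $\gsh_k\subseteq\ker P^2\subseteq V_0$; and $\big(R^2\Delta R^2\big)\mc P_{k-2}=P\big(R^2\mc P_{k-2}\big)\subseteq\mathrm{im}\,P$.

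The hard part, and the main obstacle, is to upgrade these inclusions to equalities, which is exactly where the exceptional degeneracies bite. It comes down to two structural facts about the broken strings: first, that every nilpotent Jordan block of $P$ at $0$ has size at most $2$, so that $V_0=\ker P^2$; and second, that the outer $R^2$ does not enlarge the kernel, i.e.\ $\mathrm{im}\big(\Delta R^2\Delta\big)\cap\ker R^2=0$, so that $\ker P^2=\ker\big(\Delta R^2\Delta\big)=\gsh_k$. The dual statement for $V_1$ requires controlling the cokernel of $\Delta$. Both reduce to understanding how $\ker R^2$ and the non-surjectivity of $\Delta$ (both nonzero in this range, which is precisely why Proposition~\ref{LemFC} fails) sit inside the string decomposition, and I would settle them by analysing the indecomposable $\mf{sl}(2)$-constituents of $\mc P$ directly: each is either an irreducible infinite string or a length-two extension of two strings glued at a zero of $c_s$, and on each the operator $P$, the space $\gsh_k$ and the image $R^2\Delta R^2\mc P_{k-2}$ can be read off explicitly. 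Once this non-semisimple bookkeeping is in place, Fitting's lemma delivers the single-step splitting, and the same structural analysis yields the injectivity needed to make the iteration over degrees direct, giving the generalised Fischer decomposition.
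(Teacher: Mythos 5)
Your proposal does not reach the actual content of this proposition. Note first that the paper itself gives no proof: Proposition \ref{PropFDgen} is quoted from \cite{LS}, so your argument has to stand entirely on its own. The part you do prove --- the generic case $M\notin-2\N$, via the scalars $c_s=2s(M+2k-2s-2)$ --- is correct, but that is precisely the case already covered by Proposition \ref{LemFC}; the only reason Proposition \ref{PropFDgen} exists is the exceptional case $M\in-2\N$. For that case your text is a plan, not a proof: the bound on the size of the Jordan blocks of $P=R^2\Delta$ at $0$, the identifications $V_0=\gsh_k$ and $V_1=\big(R^{2}\Delta R^{2}\big)\mc P_{k-2}$ of the Fitting components, the claim that the indecomposable $\mf{sl}(2)$-constituents of $\mc P$ are irreducible strings or length-two gluings, and the directness of the iteration over degrees are all announced (``I would\dots'') and none is carried out. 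Even the inclusion you call easy, $\big(R^{2}\Delta R^{2}\big)\mc P_{k-2}\subseteq\mathrm{im}\,P$, does not give the easy half of the identification of $V_1$, since $V_1=\bigcap_n\mathrm{im}\,P^n$ is in general strictly smaller than $\mathrm{im}\,P$.

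Moreover, the proposal misjudges where the difficulty sits, in both directions. The step you single out as a main obstacle --- that $\mathrm{im}\big(\Delta R^2\Delta\big)\cap\ker R^2=0$, hence $\ker P^2=\gsh_k$ --- is trivial: multiplication by $R^2$ is injective on all of $\mc P\big(\K^{m|2n}\big)$, because filtering by Grassmann degree the lowest term of $R^2f$ is $\big({-}x_0^2+\sum_{i=1}^{m-1}x_i^2\big)$ times the lowest term of $f$, and $\mc P\big(\K^{m}\big)$ is an integral domain. Conversely, the injectivity you rely on to make the iteration over degrees direct is false exactly in the exceptional range: for $M=0$ one has $\Delta R^2 1=(4\E+2M)1=0$, so $R^2\Delta R^2$ annihilates $\gsh_0=\mc P_0$, and more generally the summands $\big(R^{2}\Delta R^{2}\big)^j\gsh_{k-2j}$ can vanish outright. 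Directness of $\bigoplus_j\big(R^{2}\Delta R^{2}\big)^j\gsh_{k-2j}$ therefore cannot be obtained from the single-step splitting plus injectivity; it needs a genuinely different argument, which is exactly what \cite{LS} supplies. As it stands, the exceptional case --- i.e., the proposition itself --- remains open in your write-up.
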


From \cite[Theorem 5.2]{K2} we obtain the following.
\begin{Proposition}\label{PropIrred}
If $M\not\in -2\N$, then $\mc H_k\big(\K^{m|2n}\big)$ is an irreducible $\mf{osp}_\K(m|2n)$-module.
\end{Proposition}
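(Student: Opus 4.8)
The plan is to verify the easy half---that $\mc H_k\big(\K^{m|2n}\big)$ is an $\mf{osp}_\K(m|2n)$-submodule of $\mc P_k\big(\K^{m|2n}\big)$---and then to realise it as a cyclic highest weight module with a one-dimensional highest weight space, from which irreducibility will follow once the degeneracies excluded by the hypothesis $M\notin -2\N$ are dealt with.

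For the module structure, note that each basis operator $L_{ij}$ of the differential-operator realisation is homogeneous of degree $0$, so it preserves $\mc P_k\big(\K^{m|2n}\big)$; by Lemma~\ref{Lemsltriple} it also commutes with the Laplacian $\Delta$, hence preserves $\ker\Delta$. Therefore the $L_{ij}$ preserve $\mc H_k\big(\K^{m|2n}\big)=\mc P_k\big(\K^{m|2n}\big)\cap\ker\Delta$, and $\mc H_k$ is genuinely an $\mf{osp}_\K(m|2n)$-module. It remains to prove irreducibility.

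Next I would fix a triangular decomposition of $\mf{osp}_\K(m|2n)$ and exhibit an explicit highest weight vector. Choosing an even linear form $w$ that is isotropic for $\beta$, one computes that $\Delta\big(w^k\big)$ is proportional to $\bbf{w,w}w^{k-2}$ and hence vanishes, so $w^k\in\mc H_k$; a direct check shows $w^k$ is annihilated by every positive root vector, so it is a highest weight vector, of weight $k\epsilon_1$ in the standard labelling. The key step is then to show that $\mc H_k$ is cyclic, i.e.\ $\mc H_k=U\big(\mf{osp}_\K(m|2n)\big)\cdot w^k$. I would establish this either by a direct argument applying lowering operators to $w^k$, or---more efficiently---by a dimension count: one reads off $\dim\mc H_k$ from the Fischer decomposition of Proposition~\ref{LemFC} (valid precisely because $M\notin -2\N$) and matches it against $\dim L(k\epsilon_1)$.

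Finally, since the weight $k\epsilon_1$ occurs with multiplicity one, any nonzero submodule $W\subseteq\mc H_k$ contains, by finite-dimensionality, a weight vector killed by all raising operators; such a vector must be proportional to $w^k$, whence $W\supseteq U\big(\mf{osp}_\K(m|2n)\big)\cdot w^k=\mc H_k$ and $W=\mc H_k$. The main obstacle is exactly here: finite-dimensional modules over a Lie superalgebra are generally not completely reducible, so the existence of a single highest weight line does not by itself give irreducibility, and one must genuinely rule out a proper invariant subspace. This is where the hypothesis $M=m-2n\notin -2\N$ is indispensable---it keeps the trace and $R^2$ maps non-degenerate, so that the Fischer decomposition holds and the atypical situations in which $w^k$ would fail to generate all of $\mc H_k$ do not arise. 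Verifying that $w^k$ really does fill out the whole harmonic space under this condition is the delicate point, and it is precisely the content one extracts from \cite[Theorem~5.2]{K2}.
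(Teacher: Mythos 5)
The first thing to note is that the paper contains no proof of this Proposition at all: it is imported wholesale, the entire justification being the sentence ``From \cite[Theorem~5.2]{K2} we obtain the following.'' Your proposal, after its highest-weight scaffolding, defers the decisive step to exactly that same theorem, so in substance you end up where the paper does --- and your opening observation (the $L_{ij}$ preserve degree and, by Lemma~\ref{Lemsltriple}, commute with $\Delta$, so $\mc H_k\big(\K^{m|2n}\big)$ is a submodule) is correct and is implicitly the easy half of the cited result.

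However, the scaffolding itself would not survive as a standalone proof, and it is worth being precise about where it breaks. The final step claims that any nonzero submodule $W$ contains a singular vector which ``must be proportional to $w^k$'' because the weight $k\epsilon_1$ has multiplicity one. That inference is invalid: a singular vector of $W$ need not have weight $k\epsilon_1$; multiplicity one of the top weight does not exclude singular vectors of strictly lower weight, and ruling those out is (given cyclicity) \emph{equivalent} to the irreducibility you are trying to prove, so the argument is circular. The failure is not hypothetical: for $M=0$ one has $\Delta R^2=2M=0$, so $R^2\in\mc H_2\big(\K^{m|2n}\big)$; since $L_{ij}R^2=0$, the line $\C R^2$ is a proper nonzero submodule of $\mc H_2$, and $R^2$ is a singular vector (of weight $0$) not proportional to $w^2$. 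This is precisely the phenomenon --- harmonics divisible by $R^2$ --- that the hypothesis $M\notin-2\N$ must exclude, and excluding it is the actual content of \cite[Theorem~5.2]{K2}. Your alternative route, the dimension count against $\dim L(k\epsilon_1)$, has the same status: there is no elementary dimension formula for irreducible $\mf{osp}_\K(m|2n)$-modules (the weight $k\epsilon_1$ can be atypical, so the Kac--Weyl formula does not apply), so computing $\dim L(k\epsilon_1)$ amounts to re-deriving the cited result rather than providing an independent check. In short: your proof is correct only in the same sense the paper's is --- by citation --- and the intermediate arguments, read literally, contain a genuine gap.
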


The dimension of the spherical harmonics of degree $k$ is given in \cite[Corollary~1]{DS}.
\begin{Proposition}\label{PropDimH}
The dimension of $\mc H_k\big(\K^{m|2n}\big)$, for $m\neq 0$ is given by
\begin{gather*}
\dim \mc H_k\big(\K^{m|2n}\big) = \dim \mc P_k\big(\K^{m|2n}\big) - \dim \mc P_{k-2}\big(\K^{m|2n}\big),
\end{gather*}
with
\begin{gather*}
\dim \mc P_k\big(\K^{m|2n}\big) = \sum_{i=0}^{\min(k,2n)}\binom{2n}{i}\binom{k-i+m-1}{m-1}.
\end{gather*}
\end{Proposition}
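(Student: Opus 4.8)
The plan is to treat the two assertions separately: first the closed formula for $\dim\mc P_k\big(\K^{m|2n}\big)$, which is pure bookkeeping, and then the identity $\dim\mc H_k=\dim\mc P_k-\dim\mc P_{k-2}$, which I would reduce to the surjectivity of the Laplacian.

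For the formula I would use the tensor product structure $\mc P\big(\K^{m|2n}\big)=\mc P\big(\K^{m}\big)\otimes_\C\Lambda\big(\K^{2n}\big)$. Grading by the Euler operator, a homogeneous element of total degree $k$ is a sum of terms of degree $k-i$ in the commuting variables and degree $i$ in the Grassmann variables, so that $\mc P_k\big(\K^{m|2n}\big)=\bigoplus_{i=0}^{\min(k,2n)}\mc P_{k-i}\big(\K^{m}\big)\otimes_\C\Lambda_i\big(\K^{2n}\big)$. Here $\dim\mc P_{k-i}\big(\K^m\big)=\binom{k-i+m-1}{m-1}$ counts the monomials of degree $k-i$ in $m$ commuting variables, while $\dim\Lambda_i\big(\K^{2n}\big)=\binom{2n}{i}$ counts the squarefree monomials of degree $i$ in $2n$ anticommuting variables; the index $i$ runs only up to $\min(k,2n)$ because $\Lambda_i=0$ for $i>2n$ and there are no commuting monomials of negative degree. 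Summing the products of these dimensions yields the stated expression for $\dim\mc P_k$.

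For the identity, since $\Delta$ lowers the degree by $2$ it restricts to a linear map $\Delta\colon\mc P_k\big(\K^{m|2n}\big)\to\mc P_{k-2}\big(\K^{m|2n}\big)$ whose kernel is exactly $\mc H_k\big(\K^{m|2n}\big)$. The rank--nullity theorem then gives $\dim\mc H_k=\dim\mc P_k-\operatorname{rank}\Delta$, so everything comes down to showing that this restriction is surjective, i.e.\ $\operatorname{rank}\Delta=\dim\mc P_{k-2}$. When $M=m-2n\notin-2\N$ this is immediate from Proposition \ref{LemFC}: writing $\mc P_k=\bigoplus_{j\ge0}R^{2j}\mc H_{k-2j}$ and the analogous decomposition of $\mc P_{k-2}$, a telescoping comparison of the two finite sums gives $\dim\mc P_k-\dim\mc P_{k-2}=\dim\mc H_k$ at once.

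To cover the exceptional values $M\in-2\N$ as well (the hypothesis being only $m\neq0$), I would prove surjectivity of $\Delta$ directly. Splitting the bilinear form into its even and odd parts writes $\Delta=\Delta_{\mathrm b}+\Delta_{\mathrm f}$, where $\Delta_{\mathrm b}$ is the classical Laplacian in the $m$ commuting variables and $\Delta_{\mathrm f}$ involves only the anticommuting derivatives; with respect to the grading by odd degree, $\Delta_{\mathrm b}$ preserves the odd degree while $\Delta_{\mathrm f}$ lowers it by $2$. Given a target $q=\sum_i q^{[i]}\in\mc P_{k-2}$ decomposed by odd degree $i$, I would solve $\Delta p=q$ for $p=\sum_i p^{[i]}$ by descending recursion on $i$: at each odd degree the equation reads $\Delta_{\mathrm b}p^{[i]}=q^{[i]}-\Delta_{\mathrm f}p^{[i+2]}$, and since $m\geq1$ the classical Laplacian $\Delta_{\mathrm b}$ is surjective in the commuting variables, so $p^{[i]}$ can be chosen once $p^{[i+2]}$ is known. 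Starting from the highest odd degree, where the correction term vanishes, this produces a preimage $p$, proving surjectivity and hence the identity. The main obstacle is precisely this surjectivity in the exceptional range $M\in-2\N$, where the clean Fischer decomposition is unavailable; the splitting $\Delta=\Delta_{\mathrm b}+\Delta_{\mathrm f}$ together with the classical surjectivity of $\Delta_{\mathrm b}$ is what rescues the argument.
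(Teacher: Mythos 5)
Your proof is correct, but note that the paper does not actually prove this proposition: it is imported from the literature, cited as \cite[Corollary~1]{DS}, so any self-contained argument is by construction a different route from the paper's. Your count of $\dim \mc P_k\big(\K^{m|2n}\big)$ via the tensor-product grading is routine and correct. The substantive content is your reduction, via rank--nullity, of the identity $\dim \mc H_k = \dim\mc P_k - \dim \mc P_{k-2}$ to surjectivity of $\Delta\colon \mc P_k\big(\K^{m|2n}\big)\to\mc P_{k-2}\big(\K^{m|2n}\big)$, and your descending recursion on Grassmann degree does establish this surjectivity for every $m\geq 1$: since $\beta$ is even it has no even--odd cross terms, so $\Delta=\Delta_{\mathrm b}+\Delta_{\mathrm f}$ splits exactly as you say, and solving $\Delta_{\mathrm b}p^{[i]}=q^{[i]}-\Delta_{\mathrm f}p^{[i+2]}$ componentwise in a basis of $\Lambda_i\big(\K^{2n}\big)$ only needs surjectivity of the classical Laplacian of a nondegenerate form in $m\geq 1$ commuting variables (for $\K=\R$ the even part of $\beta$ has signature $(m-1,1)$, but surjectivity follows from the complex case since the rank of the map is unchanged under complexification --- this deserves a word). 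This argument is valid with no condition on the superdimension $M$, which is precisely the generality the proposition claims ($m\neq 0$ only) and exactly where a pure Fischer-decomposition proof would fail. Two small remarks: your intermediate paragraph treating $M\notin-2\N$ separately is redundant, since the recursion already covers that case; and as written its telescoping step silently assumes $\dim R^{2j}\mc H_{k-2j}=\dim\mc H_{k-2j}$, i.e., injectivity of multiplication by $R^{2j}$, which is true (the purely even part of $R^2$ is a nonzero element of the domain $\mc P\big(\K^m\big)$, so multiplication by $R^2$ is injective on all of $\mc P\big(\K^{m|2n}\big)$) but would need saying. Neither point affects the validity of the overall proof.
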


We will also use the following formula for the dimension of the spherical harmonics of degree~$k$.
\begin{Proposition}\label{Prop dimension}
The dimension of $\mc H_k\big(\K^{m|2n}\big)$, for $m > 1$ is given by
\begin{gather*}
\dim \mc H_k\big(\K^{m|2n}\big) = \dim \mc P_k\big(\K^{m-1|2n}\big) + \dim \mc P_{k-1}\big(\K^{m-1|2n}\big).
\end{gather*}
\end{Proposition}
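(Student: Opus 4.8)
The plan is to derive the identity directly from the closed dimension formula of Proposition~\ref{PropDimH} by passing to generating functions in the degree~$k$. First I would observe that the formula
\[
\dim \mc P_k\big(\K^{m|2n}\big) = \sum_{i=0}^{\min(k,2n)}\binom{2n}{i}\binom{k-i+m-1}{m-1}
\]
exhibits $\dim \mc P_k\big(\K^{m|2n}\big)$ as a Cauchy product: it is the coefficient of $t^k$ in the product of $(1+t)^{2n} = \sum_i \binom{2n}{i}t^i$ and $(1-t)^{-m} = \sum_j \binom{j+m-1}{m-1}t^j$. Hence the ordinary generating function of the numbers $\dim \mc P_k\big(\K^{m|2n}\big)$ is
\[
\sum_{k=0}^{\infty} \dim \mc P_k\big(\K^{m|2n}\big)\, t^k = \frac{(1+t)^{2n}}{(1-t)^{m}}.
\]
Note that this requires $m \neq 0$ for the even factor, and for the right-hand side of the proposition we will also need $m-1 \geq 1$, i.e., $m > 1$, which is exactly the stated hypothesis.

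Next I would feed in Proposition~\ref{PropDimH}, which gives $\dim \mc H_k\big(\K^{m|2n}\big) = \dim \mc P_k\big(\K^{m|2n}\big) - \dim \mc P_{k-2}\big(\K^{m|2n}\big)$. Multiplying the generating function above by $1-t^2$ therefore produces the generating function of the left-hand side of the claimed identity, and using $1-t^2 = (1-t)(1+t)$ it simplifies as
\[
\big(1-t^2\big)\frac{(1+t)^{2n}}{(1-t)^{m}} = \frac{(1+t)^{2n+1}}{(1-t)^{m-1}}.
\]
On the other hand, the generating function of the right-hand side $\dim \mc P_k\big(\K^{m-1|2n}\big) + \dim \mc P_{k-1}\big(\K^{m-1|2n}\big)$ is obtained by multiplying the generating function of $\dim \mc P_k\big(\K^{m-1|2n}\big)$ by $1+t$, giving
\[
(1+t)\frac{(1+t)^{2n}}{(1-t)^{m-1}} = \frac{(1+t)^{2n+1}}{(1-t)^{m-1}}.
\]
The two generating functions coincide, so comparing coefficients of $t^k$ yields the asserted equality of dimensions.

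I do not expect any real obstacle here; the argument is bookkeeping once the generating function $\frac{(1+t)^{2n}}{(1-t)^m}$ is identified. The only point that genuinely needs the hypothesis is the shift from $m$ to $m-1$ variables, which is valid precisely because $m > 1$ ensures $m - 1 \geq 1$ so that Proposition~\ref{PropDimH} still applies to $\mc P\big(\K^{m-1|2n}\big)$. One could instead argue entirely combinatorially, applying Pascal's rule $\binom{k-i+m-1}{m-1} = \binom{k-i+m-2}{m-2} + \binom{k-i+m-2}{m-1}$ term by term inside the sums and reindexing; but this makes the telescoping considerably more opaque and error-prone, so I would favour the generating-function computation, which makes the factorisation $(1-t^2) = (1-t)(1+t)$ do all the work transparently.
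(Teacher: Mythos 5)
Your proposal is correct, and it verifies the identity by a genuinely different method than the paper. Both arguments start the same way, reducing the claim via Proposition~\ref{PropDimH} to the identity $\dim \mc P_k\big(\K^{m|2n}\big) - \dim \mc P_{k-2}\big(\K^{m|2n}\big) = \dim \mc P_k\big(\K^{m-1|2n}\big) + \dim\mc P_{k-1}\big(\K^{m-1|2n}\big)$; the difference lies in how that identity is checked. The paper works at the level of individual binomial coefficients: it applies Pascal's rule termwise inside the sums, and must then split into the cases $2n\leq k-2$ and $2n>k-2$ to account for the truncation of the sum at $\min(k,2n)$, with extra boundary terms $\binom{2n}{k-1}\big(\binom{m}{m-1}-\binom{m-1}{m-2}-\binom{m-2}{m-2}\big)$ and $\binom{2n}{k}\big(\binom{m-1}{m-1}-\binom{m-2}{m-2}\big)$ handled separately. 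Your Hilbert-series argument absorbs all of this bookkeeping at once: the truncation at $\min(k,2n)$ is automatic because $\binom{2n}{i}=0$ for $i>2n$, the conventions $\dim\mc P_{k-1}=\dim\mc P_{k-2}=0$ for small $k$ are built into the formal power series, and the whole identity collapses to the factorisation $(1-t^2)=(1-t)(1+t)$ applied to $\frac{(1+t)^{2n}}{(1-t)^m}$. What the paper's route buys is self-containedness (nothing beyond binomial identities is invoked); what yours buys is the elimination of the case analysis, which is precisely where such proofs tend to go wrong, and a conceptual explanation of why the identity holds. Your tracking of the hypothesis $m>1$ (needed so that the closed formula, equivalently the factor $(1-t)^{-(m-1)}$, makes sense for $m-1$ even variables) is also accurate.
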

\begin{proof}
If we prove
\begin{gather*}
\dim \mathcal P_k\big(\K^{m|2n}\big) - \dim \mathcal P_{k-2}\big(\K^{m|2n}\big) - \dim \mathcal{P}_k\big(\K^{m-1|2n}\big) - \dim\mathcal{P}_{k-1}\big(\K^{m-1|2n}\big)=0,
\end{gather*}
then the proposition follows from Proposition~\ref{PropDimH}. First suppose $2n\leq k-2$, then the above equation becomes
\begin{gather*}
\sum_{i=0}^{2n} \binom{2n}{i} \left(\binom{k-i+m-1}{m-1}-\binom{k-i+m-3}{m-1}\right.\\
 \left.\qquad{}-\binom{k-i+m-2}{m-2}-\binom{k-i+m-3}{m-2}\right) = 0,
\end{gather*}
which is true since the recursive formula of binomial coefficients gives us that
\begin{gather*}
\binom{k-i+m-1}{m-1}-\binom{k-i+m-3}{m-1}-\binom{k-i+m-2}{m-2}-\binom{k-i+m-3}{m-2} = 0,
\end{gather*}
for all $i\in\{0, \ldots, 2n\}$. For $2n > k-2$ we have the following extra terms
\begin{gather*}
\binom{2n}{k-1}\left(\binom{m}{m-1}-\binom{m-1}{m-2}-\binom{m-2}{m-2}\right) + \binom{2n}{k}\left(\binom{m-1}{m-1}-\binom{m-2}{m-2}\right),
\end{gather*}
where we ignore the last term if $2n=k-1$. Using basic binomial properties, these terms are clearly equal to zero.
\end{proof}

For $n = 0$ a more insightful reasoning as to why this formula holds is given by Proposition 5 in \cite{DBGvdVV}.

\subsection[The spin factor Jordan superalgebra $J$]{The spin factor Jordan superalgebra $\boldsymbol{J}$}
To each Hermitian Lie group of tube type corresponds an Euclidean Jordan algebra. These Jordan algebras were a crucial ingredient in the unified approach to construct minimal representations and the Segal--Bargmann transform \cite{HKM, HKMO}. More concretely, one can associate with each Jordan algebra certain Lie algebras (the structure algebra and the TKK-algebra), and the Jordan algebra is also used in the construction of spaces on which these Lie algebras act. In this paper we will not use anything directly from Jordan theory, but introducing $\mathfrak{osp}(m,2|2n)$ via the spin factor Jordan superalgebra leads to a natural decomposition of $\mathfrak{osp}(m,2|2n)$ as well as to some interesting subalgebras that we will use.
\begin{Definition}
A \textit{Jordan superalgebra} is a supercommutative superalgebra $J$ satisfying the Jordan identity
\begin{gather*}
(-1)^{|x||z|}[L_{x}, L_{yz}]+(-1)^{|y||x|}[L_{y}, L_{zx}]+(-1)^{|z||y|}[L_{z}, L_{xy}]=0 \qquad \text{for all } x,y,z \in J.
 \end{gather*}
Here the operator $L_x$ is (left) multiplication with $x$ and $[\cdot\,,\cdot]$ is the supercommutator, i.e., $[L_x,L_y] := L_xL_y - (-1)^{|x||y|}L_yL_x$.
\end{Definition}

Let $\mathbb K$ be either $\mathbb R$ or $\C$. We will define the spin factor Jordan superalgebra associated with a supersymmetric, non-degenerate, even, bilinear form. Let $V_\K$ be a super-vector space over $\mathbb K$ with $\dim(V_\K) = (m-1|2n)$ and a supersymmetric, non-degenerate, even, bilinear form $\langle \cdot \;, \cdot \rangle _{\tilde\beta}$ where, for $\K=\mathbb R$, the even part has signature $(m-1,0)$. Recall that we always assume $m\geq 2$. We choose a homogeneous basis $(e_i)_{i=1}^{m+2n-1}$ of $V_\K$. For $u = \sum_i u^i e_i$ and $v=\sum_i v^i e_i$ we then have
\begin{gather*}
\langle u,v\rangle _{\tilde\beta} = \sum_{i,j} u^i {\tilde\beta}_{ij}v^j\qquad \mbox{with} \quad {\tilde\beta}_{ij} := \langle e_i,e_j\rangle _{\tilde\beta}.
\end{gather*}
\begin{Definition}
The \textit{spin factor Jordan superalgebra} is defined as $J_\K := \K e_0 \oplus V_\K$ with $|e_0| = 0$. The Jordan product is given by
\begin{gather*}
(\lambda e_0+u)(\mu e_0 + v) = (\lambda\mu +\langle u,v\rangle _{\tilde\beta})e_0 + \lambda v +\mu u,
\end{gather*}
for $u,v\in V_\K$ and $\lambda,\mu \in \K$.
\end{Definition}
Thus $e_0$ is the unit of $J_\K$.
We extend the homogeneous basis $(e_i)_{i=1}^{m+2n-1}$ of $V_\K$ to a homogeneous basis $(e_i)_{i=0}^{m+2n-1}$ of $J_\K$ and extend the bilinear form $\langle \cdot \;, \cdot \rangle _{\tilde\beta}$ as follows. Set $\beta_{00} = -1$, $\beta_{i0}=0=\beta_{0i}$ and $\beta_{ij} = {\tilde\beta}_{ij}$ for $i,j \in \{1, \ldots, m+2n-1\}$. Then the corresponding form $\langle \cdot \;, \cdot \rangle _\beta$ is a supersymmetric, non-degenerate, even bilinear form on the super-vector space $J_\K$ where, for $\K=\mathbb R$, the even part has signature $(m-1,1)$.

Define $(\beta^{ij})_{ij}$ as the inverse of $(\beta_{ij})_{ij}$. Let $\big(e^i\big)_i$ be the right dual basis of $(e_i)_i$ with respect to the form $\langle \cdot \;, \cdot \rangle _\beta$, i.e.,
\begin{gather*}
\langle e_i \;, e^j \rangle _\beta = \delta_{ij}.
\end{gather*}
Then
\begin{gather*}
e^j = \sum_i e_i\beta^{ij}.
\end{gather*}

In this paper we will assume that the orthosymplectic metric is standardized such that
\begin{align*}
\beta = (\beta_{ij})_{i,j = 0}^{m+2n-1} = \left(\begin{array}{cc|cc}
-1 & & &\\
 & I_{m-1} & &\\ \hline
 & & &-I_n\\
 & & I_n &
\end{array}\right),
\end{align*}
with $I_d$ the $d$-dimensional identity matrix.

From now on the real spin factor Jordan superalgebra will always be denoted by $J$ or $J_\R$ and its complexified version by $J_\C$.

\subsection{The TKK algebra}\label{SSExpReal}
With each Jordan (super)algebra one can associate a $3$-graded Lie (super)algebra via the TKK-construction. There exist different TKK-constructions in the literature, see \cite{BC2} for an overview, but for the spin factor Jordan superalgebra $J_\K$ all constructions lead to the orthosymplectic Lie superalgebras $\mf{osp}_\K(m,2|2n)$. We will quickly review the Koecher construction.
First consider $\Inn (J_\K)$, the subalgebra of $\mf{gl}(J_\K)$ of inner derivations. It is generated by the operators $[L_u,L_v]$, $u,v\in J_\K$. If we add the left multiplication operators $L_u$ to the inner derivations we obtain the inner structure algebra:
\begin{gather*}
\mf{istr}(J_\K) := \{L_u|u\in J_\K\}\oplus\Inn(J_\K) = \operatorname{span}_\K\{L_u,[L_u,L_v]\colon u,v\in J_\K\}.
\end{gather*}
Let $J_\K^+$ and $J_\K^-$ be two copies of $J_\K$. As a vector space we define the TKK-algebra of $J_\K$ as
\begin{gather*}
\TKK(J_\K) := J_\K^-\oplus \mf{istr}(J_\K) \oplus J_\K^+.
\end{gather*}
The Lie bracket on $\TKK(J_\K)$ is defined as follows. We interpret $\mf{istr}(J_\K)$ as a subalgebra of~$\mf{gl}(J_\K)$ and for homogeneous $x,y\in J_\K^+$, $u,v\in J_\K^-$, $a,b\in J_\K$ we set
\begin{alignat*}{3}
& [x,u] = 2L_{xu}+2[L_x,L_u], \qquad&& [x,y] = [u,v] = 0,& \\
& [L_a,x] = ax, \qquad && [L_a,u] = -au,& \\
& [[L_a,L_b], x] = [L_a,L_b]x, \qquad && [[L_a,L_b], u] = [L_a,L_b]u.&
\end{alignat*}
From \cite[Proposition~3.1]{BF} we obtain the following
\begin{Theorem}
We have
\begin{gather*}
\mf{istr}(J_\K) = \mf{osp}_\K(J_\K)\oplus \K L_{e_0} \cong \mf{osp}_\K(m-1,1)\oplus \K L_{e_0}, \qquad \TKK(J_\K) \cong \mf{osp}_\K(m,2|2n),
\end{gather*}
where the direct sum decomposition is as algebras.
\end{Theorem}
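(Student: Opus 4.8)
The plan is to treat the two isomorphisms separately: first identify the inner structure algebra $\mf{istr}(J_\K)$ by a direct computation with the left-multiplication operators, and then deduce $\TKK(J_\K)\cong\mf{osp}_\K(m,2|2n)$ by exhibiting a compatible $3$-grading on $\mf{osp}_\K(m,2|2n)$ and matching the brackets with the TKK relations.

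For the structure algebra I would first record that $e_0$ is the unit, so $L_{e_0}=\mathrm{id}$ and $\K L_{e_0}=\K\cdot\mathrm{id}$. For $w\in V_\K$ the Jordan product gives $L_w(\mu e_0+v)=\langle w,v\rangle_{\tilde\beta}e_0+\mu w$, so $L_w$ exchanges $\K e_0$ and $V_\K$. The key computation is to test the defining relation of $\mf{osp}_\K(J_\K)$ on $L_w$: the only nontrivial cross term is $\langle w,v\rangle_{\tilde\beta}\big(1+\bbf{e_0,e_0}\big)$, which vanishes precisely because $\bbf{e_0,e_0}=\beta_{00}=-1$, whence $L_w\in\mf{osp}_\K(J_\K)$ for all $w$. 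Since $L_{e_0}=\mathrm{id}$ is central, the inner derivations reduce to the supercommutators $[L_w,L_{w'}]$, and a short calculation shows these annihilate $e_0$ and act on $V_\K$ as $v\mapsto\langle w',v\rangle_{\tilde\beta}w\mp\langle w,v\rangle_{\tilde\beta}w'$, i.e., as the standard generators spanning $\mf{osp}_\K(V_\K)=\mf{osp}_\K(m-1|2n)$. Conversely, any $X\in\mf{osp}_\K(J_\K)$ satisfies $\langle Xe_0,e_0\rangle_\beta=0$ by antisymmetry, which forces $Xe_0\in V_\K$; subtracting the matching $L_w$ leaves an element fixing $e_0$ and preserving $V_\K$, hence lying in $\mf{osp}_\K(V_\K)$. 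This yields $\mf{osp}_\K(J_\K)=\{L_w:w\in V_\K\}\oplus\mf{osp}_\K(V_\K)$ and therefore $\mf{istr}(J_\K)=\K L_{e_0}\oplus\mf{osp}_\K(J_\K)$, with $\mf{osp}_\K(J_\K)\cong\mf{osp}_\K(m-1,1|2n)$ since $\beta_{00}=-1$ gives the even part of $J_\K$ the signature $(m-1,1)$.

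For the TKK isomorphism I would realize $\mf{osp}_\K(m,2|2n)$ as $\mf{osp}_\K(W)$ for $W=H\oplus V$, where $H=\K f_+\oplus\K f_-$ is an even hyperbolic plane (isotropic vectors with $\langle f_+,f_-\rangle=1$, signature $(1,1)$) and $V\cong J_\K$ carries the form $\beta$ (even signature $(m-1,1)$, odd dimension $2n$); then $W$ has even signature $(m,2)$ and odd dimension $2n$, as required. Let $h\in\mf{osp}_\K(W)$ act by $\pm1$ on $f_\pm$ and by $0$ on $V$. Decomposing $\mf{osp}_\K(W)$ into $\ad(h)$-eigenspaces gives a priori weights in $\{-2,\dots,2\}$, but the antisymmetry defining $\mf{osp}$ kills the weight $\pm2$ pieces (a map $f_-\mapsto\alpha f_+$ forces $\alpha\langle f_+,f_-\rangle=0$), so we obtain a genuine $3$-grading $\mf{osp}_\K(W)=\mf g_{-1}\oplus\mf g_0\oplus\mf g_{+1}$ with $\mf g_0=\mf{osp}_\K(V)\oplus\K h\cong\mf{istr}(J_\K)$ and $\mf g_{\pm1}\cong V\cong J_\K$. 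Identifying $\mf g_{\pm1}$ with $J_\K^{\pm}$ and $\mf g_0$ with $\mf{istr}(J_\K)$ via the first part, the relations $[\mf g_{+1},\mf g_{+1}]=[\mf g_{-1},\mf g_{-1}]=0$ and the $\mf g_0$-action on $\mf g_{\pm1}$ reproduce the corresponding TKK brackets at once.

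The main obstacle is the remaining bracket $[\mf g_{+1},\mf g_{-1}]\subseteq\mf g_0$: one must verify that, under the above identification and with the correct normalization of the isomorphisms $\mf g_{\pm1}\cong J_\K$, the commutator in $\mf{osp}_\K(W)$ of the weight $+1$ element attached to $x$ and the weight $-1$ element attached to $u$ equals exactly $2L_{xu}+2[L_x,L_u]$. This is the step where the Jordan product resurfaces inside the orthosymplectic commutator: the $L_{xu}$ term should come from the $e_0$-component (the unit direction of $J_\K\cong V$) and the $[L_x,L_u]$ term from the $V_\K$-component, and pinning down both the scalar $2$ and the relative placement requires a careful explicit evaluation of the $\mf{osp}_\K(W)$ commutators. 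Once this single bracket is matched, the $3$-graded Lie superalgebra $\mf{osp}_\K(W)$ and $\TKK(J_\K)$ agree on all brackets, giving $\TKK(J_\K)\cong\mf{osp}_\K(m,2|2n)$.
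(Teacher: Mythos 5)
Your proof of the first isomorphism is complete and correct: the observation that $L_w\in\mf{osp}_\K(J_\K)$ for $w\in V_\K$ \emph{precisely because} $\beta_{00}=-1$, together with the converse step (subtract $L_{Xe_0}$ from $X$ and note that an element of $\mf{osp}_\K(J_\K)$ killing $e_0$ preserves $e_0^\perp=V_\K$), is exactly the right mechanism, and the inner derivations $[L_w,L_{w'}]$ do span $\mf{osp}_\K(V_\K)$. Note that the paper itself gives no proof of this theorem: it is quoted from Proposition~3.1 of the reference [BF], and the paper only records afterwards an explicit differential-operator isomorphism built from the form $\overline{\beta}$ (which is the concrete incarnation of your hyperbolic-plane model $W=H\oplus V$). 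So a self-contained argument like yours is genuinely different from what the paper does, and your reduction of $\TKK(J_\K)\cong\mf{osp}_\K(m,2|2n)$ to a $3$-grading of $\mf{osp}_\K(W)$ by the element $h$ is the right skeleton: the vanishing of the weight $\pm2$ pieces and the identifications $\mf g_0\cong\mf{istr}(J_\K)$, $\mf g_{\pm1}\cong J_\K$ are all correctly justified.

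However, the step you postpone — matching $[\mf g_{+1},\mf g_{-1}]$ with $[x,u]=2L_{xu}+2[L_x,L_u]$ — is a genuine gap, and it is not a matter of "normalization" in the sense your text suggests. Write $x=\lambda e_0+x'$, $u=\mu e_0+u'$, let $X_x\in\mf g_{+1}$ be determined by $X_xf_-=x$ and $Y_u\in\mf g_{-1}$ by $Y_uf_+=u$, and suppose the identifications are scalar rescalings $x\mapsto X_{ax}$, $u\mapsto Y_{bu}$, with $h\mapsto\kappa L_{e_0}$. Then for $x=u=e_0$ one finds $[X_{e_0},Y_{e_0}]=h$ (the sign coming from $\langle e_0,e_0\rangle_\beta=-1$), while TKK demands $[e_0^+,e_0^-]=2L_{e_0\cdot e_0}=2L_{e_0}$, forcing $ab\kappa=2$; but for $x=x'$, $u=u'\in V_\K$ the $h$-component of $[X_{x'},Y_{u'}]$ is $-\langle x',u'\rangle_{\tilde\beta}h$, while TKK demands the $L_{e_0}$-component $+2\langle x',u'\rangle_{\tilde\beta}L_{e_0}$ (since $L_{x'u'}=\langle x',u'\rangle_{\tilde\beta}\,\mathrm{id}$), forcing $ab\kappa=-2$ — a contradiction. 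The discrepancy is structural: the invariant form restricts to $\K e_0$ with sign $-1$, whereas the Jordan product has $e_0e_0=+e_0$. The repair is a twisted identification that treats the $e_0$- and $V_\K$-components differently, e.g., $\sigma_+=\mathrm{id}$ on $\mf g_{+1}$ and $\sigma_-(\mu e_0+u')=2\mu e_0-2u'$ on $\mf g_{-1}$, with $h\mapsto L_{e_0}$; writing $\sigma_\pm=(c_\pm,d_\pm)$ on $(\K e_0,V_\K)$, the full set of bracket conditions is $c_+c_-=2$, $d_+d_-=-2$, $c_+d_-=-2$, $c_-d_+=2$, which this choice satisfies, and one checks that the $\mf g_0$-actions on $\mf g_{\pm1}$ then also match $[L_a,x]=ax$, $[L_a,u]=-au$. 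With that twist your strategy goes through, but as written the final step would fail, and it is exactly the step where the proof lives.
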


Using the bilinear form
\begin{gather*}
\overline{\beta} = \left(\begin{array}{cccc|c}
1 & & & &\\
 & (\beta_{ij})_{i,j=1}^{m-1} & & &\\
 & & \beta_{00} & &\\
 & & & -1 &\\ \hline
 & & & & (\beta_{ij})_{i,j = m}^{m+2n-1}
\end{array}\right),
\end{gather*}
we have the following explicit isomorphism of $\TKK(J_\K)$ with the differential operator realisation of $\mathfrak{osp}_\K(m,2|2n)$:
\begin{alignat*}{3}
&e_i^- \mapsto L_{\tilde{i}, (m+1)} +L_{\tilde{i}, 0},\qquad && e_0^- \mapsto L_{m, (m+1)}+L_{m,0},&\\
& L_{e_i} \mapsto L_{\tilde{i}, m},\qquad && L_{e_0} \mapsto L_{0, (m+1)},&\\
& [L_{e_i}, L_{e_j}] \mapsto L_{\tilde{i}, \tilde{j}}, &&& \\
& e_i^+ \mapsto L_{\tilde{i}, (m+1)} -L_{\tilde{i}, 0},\qquad && e_0^+ \mapsto -L_{m,(m+1)}+L_{m,0}.&
\end{alignat*}
Here $\tilde{i} = i$ if $|i|=0$ and $\tilde{i}= i+2$ if $|i|=1$.

\section[The Schr\" odinger model of $\mf{osp}(m,2|2n)$]{The Schr\" odinger model of $\boldsymbol{\mf{osp}(m,2|2n)}$}
\label{The Schrodinger representation}

In the bosonic case (i.e., $n=0$), the Schr\" odinger model of the minimal representation of a~Hermitian Lie group $G$ of tube type can be seen as a representation realized on the Hilbert space~$L^2(C)$ where~$C$ is a minimal orbit for the structure group of~$G$, see~\cite{HKMO}. In the super setting the classical definition of minimal orbits no longer works. Indeed, supermanifolds, in contrast to ordinary manifolds, are not completely determined by their points. In~\cite{BF} an orbit was instead defined as the quotient supermanifold of the structure group by a stabilizer subgroup together with an embedding. Using this definition, a minimal orbit $C$ was constructed which can be characterized by $R^2=0$, with~$R^2$ as introduced in~(\ref{EqR2ELap}). We refer to Section~4 of~\cite{BF} for a detailed description of this minimal orbit. We will now recall the Schr\"odinger model constructed in~\cite{BF}. A critical role in this construction is played by the Bessel operators.

\subsection{The Bessel operator}

The Bessel operator $\bessel_\lambda(x_k)$ is a linear operator acting on $\mc P\big(\R^{m|2n}\big)$. It depends on a complex parameter $\lambda$ and an explicit expression is given by
\begin{gather*}
\bessel_\lambda(x_k) := (-\lambda + 2\E)\pt k - x_k\Delta,
\end{gather*}
where $\E$ and $\Delta$ are the Euler operator and Laplacian introduced in~(\ref{EqR2ELap}).

From Proposition 4.9 of \cite{BF} we obtain the following.
\begin{Proposition}\label{PropTangBes}
The Bessel operators map $\langle R^2 \rangle $ into $\big\langle R^2 \big\rangle $ if and only if $\lambda = 2-M$, where $M=m-2n$ is the superdimension of $\R^{m|2n}$ and $\big\langle R^2 \big\rangle $ is the ideal in $\mc P\big(\R^{m|2n}\big)$ generated by~$R^2$.
\end{Proposition}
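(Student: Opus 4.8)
The plan is to test the defining property directly on the ideal, exploiting that $\mc P\big(\R^{m|2n}\big)$ is supercommutative. Since $R^2$ is even of degree $2$, the ideal $\langle R^2\rangle$ equals $R^2\,\mc P\big(\R^{m|2n}\big)$, so \emph{every} element of it has the form $R^2 p$ and it suffices to compute $\bessel_\lambda(x_k)\big(R^2 p\big)$ for an arbitrary $p$ and ask when the result lands back in $R^2\,\mc P\big(\R^{m|2n}\big)$. Beyond the defining formula $\bessel_\lambda(x_k) = (-\lambda + 2\E)\pt k - x_k\Delta$, the only inputs I need are a short list of commutators: the $\mf{sl}_2$-relations $[\E, R^2] = 2R^2$ and $[\Delta, R^2] = 4\E + 2M$ supplied by Lemma~\ref{Lemsltriple}, the elementary $[\E, x_k] = x_k$ (because $x_k$ is homogeneous of degree $1$), and the identity $\pt k\big(R^2\big) = 2x_k$, which I would read off from $R^2 = \sum_j x^j x_j$ and $\pt k\big(x^j\big) = \delta_{kj}$ by a graded-Leibniz computation in which the even parity of $R^2$ removes the Koszul signs.

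First I would push the derivatives through $R^2$ via the Leibniz rule, obtaining $\pt k\big(R^2 p\big) = 2x_k p + R^2\,\pt k p$ and $\Delta\big(R^2 p\big) = (4\E + 2M)p + R^2\Delta p$. Substituting these into the two summands of $\bessel_\lambda(x_k)$ and then commuting $\E$ and $x_k$ to the right — using $\E(x_k p) = x_k(1 + \E)p$, $\E\big(R^2 q\big) = R^2(2 + \E)q$ and $x_k R^2 = R^2 x_k$ — every contribution carrying a factor of $R^2$ should collect into a single term visibly inside $\langle R^2\rangle$. The anticipated outcome is the identity $\bessel_\lambda(x_k)\big(R^2 p\big) = (4 - 2\lambda - 2M)\,x_k p + R^2\big[(4 - \lambda + 2\E)\pt k p - x_k\Delta p\big]$. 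The step to watch is the cancellation of the two $x_k\,\E p$ contributions, a $+4$ coming from $(-\lambda + 2\E)\pt k$ and a $-4$ coming from $-x_k\Delta$; this is precisely what leaves a remainder proportional to $x_k p$ alone rather than to $x_k\,\E p$.

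With this identity both directions follow at once. The bracketed term lies in $\langle R^2\rangle$, so $\bessel_\lambda(x_k)$ preserves the ideal as soon as the scalar $4 - 2\lambda - 2M$ vanishes, i.e., for $\lambda = 2 - M$. Conversely, specialising to $p = 1$ gives $\bessel_\lambda(x_k)\big(R^2\big) = (4 - 2\lambda - 2M)x_k$; since $x_k$ is a nonzero homogeneous element of degree $1$ while $\langle R^2\rangle$ has no nonzero elements of degree below $2$, membership forces $4 - 2\lambda - 2M = 0$, again $\lambda = 2 - M$. I expect the real obstacle to be not the final cancellation but the sign bookkeeping of the super setting: establishing $\pt k\big(R^2\big) = 2x_k$ and moving $\E$ and $x_k$ past $R^2$ with the correct signs. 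Once the even parity of $R^2$ is used to kill those signs, the computation runs as sketched.
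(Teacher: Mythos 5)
Your proposal is correct, and it is worth noting that the paper itself contains no proof of this proposition: it is imported wholesale from Proposition~4.9 of~\cite{BF}. Your computation therefore supplies a self-contained argument where the paper only cites. I checked the key identity and it holds exactly as you predict: using $\pt k\big(R^2\big)=2x_k$, $[\E,R^2]=2R^2$, $[\Delta,R^2]=4\E+2M$ and $[\E,x_k]=x_k$ (all legitimate, the first by the graded Leibniz computation you describe and the others from Lemma~\ref{Lemsltriple}), one gets
\begin{gather*}
\bessel_\lambda(x_k)\big(R^2 p\big) = (4-2\lambda-2M)\,x_k p + R^2\big[(4-\lambda+2\E)\pt k p - x_k\Delta p\big],
\end{gather*}
with the two $4x_k\E p$ contributions cancelling as you anticipate. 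The bracketed term is in fact $\bessel_{\lambda-4}(x_k)p$, so the identity can be written compactly as $\bessel_\lambda(x_k)\big(R^2p\big)=(4-2\lambda-2M)x_kp+R^2\bessel_{\lambda-4}(x_k)p$, which makes the ``if'' direction immediate. Your ``only if'' direction is also sound: since $\mc P\big(\R^{m|2n}\big)$ is supercommutative and $R^2$ is even and central, $\big\langle R^2\big\rangle = R^2\,\mc P\big(\R^{m|2n}\big)$ is a graded ideal whose nonzero elements have degree at least $2$, so $\bessel_\lambda(x_k)\big(R^2\big)=(4-2\lambda-2M)x_k$ can only lie in it when the scalar vanishes, i.e., $\lambda=2-M$; this uses only one index $k$, so even the weaker hypothesis that a single Bessel operator preserves the ideal already forces the critical parameter. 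The one step where care is genuinely needed is the sign bookkeeping in $\pt k\big(R^2\big)=2x_k$ and $\E(x_kp)=x_k(1+\E)p$; both reduce, as you say, to the fact that $\beta^{ij}$ and $\beta_{ij}$ vanish unless $|i|=|j|$, which kills the Koszul signs.
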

Therefore we will only use the Bessel operator with the parameter $2-M$ in this paper and we set $\bessel(x_i) := \bessel_{2-M}(x_i)$. We obtain the following two properties of the Bessel operator from Proposition~4.2 in~\cite{BC1}.

\begin{Proposition}[supercommutativity]
For all $i\in\{0,\ldots, m+2n-1\}$ we have
\begin{gather*}
\bessel(x_i)\bessel(x_j) = (-1)^{|i||j|}\bessel(x_j)\bessel(x_i).
\end{gather*}
\end{Proposition}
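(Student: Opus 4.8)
The plan is to prove the stronger statement that the (graded) supercommutator
\[
[\bessel(x_i),\bessel(x_j)] := \bessel(x_i)\bessel(x_j) - (-1)^{|i||j|}\bessel(x_j)\bessel(x_i)
\]
vanishes identically as an operator on $\mc P(\R^{m|2n})$; note that $\bessel(x_k)$ has parity $|k|$, since $\E$ and $\Delta$ are even. It is convenient to split $\bessel(x_k) = A_k - B_k$ with $A_k := (-\lambda + 2\E)\pt k$ and $B_k := x_k\Delta$, so that
\[
[\bessel(x_i),\bessel(x_j)] = [A_i,A_j] - [A_i,B_j] - [B_i,A_j] + [B_i,B_j],
\]
and to evaluate the four supercommutators separately.

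First I would assemble an elementary toolkit, each entry checked by acting on monomials: the degree-counting relations $[\E,\pt k] = -\pt k$ and $[\E, x_k] = x_k$; the canonical relation $[\pt i, x_j] = \beta_{ij}$ (immediate from $[\partial^c, x_j] = \delta_{cj}$ and $\pt i = \sum_c \partial^c\beta_{ic}$); the supercommutativity of generators $\pt i\pt j = (-1)^{|i||j|}\pt j\pt i$ and $x_ix_j = (-1)^{|i||j|}x_jx_i$; the relation $[\Delta, x_k] = 2\pt k$; the identity $[\Delta,\E] = 2\Delta$ from Lemma~\ref{Lemsltriple}; and the supersymmetry $\beta_{ij} = (-1)^{|i||j|}\beta_{ji}$. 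Two consequences are $\Delta\pt i = \pt i\Delta$ and the fact that the degree-zero operator $x_j\pt i$ commutes with any polynomial in $\E$. With these, the first two brackets are quick. Moving $\pt i$ through the Euler factor gives $A_iA_j = (-\lambda+2\E)(-\lambda+2+2\E)\pt i\pt j$, and since the scalar factors commute while $\pt i\pt j = (-1)^{|i||j|}\pt j\pt i$, one gets $[A_i,A_j] = 0$. Pushing $\Delta$ past $x_j$ via $[\Delta,x_j] = 2\pt j$ yields $B_iB_j = x_ix_j\Delta^2 + 2x_i\pt j\Delta$, and after $x_ix_j=(-1)^{|i||j|}x_jx_i$ the $\Delta^2$-terms cancel in the bracket, leaving
\[
[B_i,B_j] = 2\big(x_i\pt j - (-1)^{|i||j|}x_j\pt i\big)\Delta = 2L_{ij}\Delta.
\]

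This is nonzero, so the whole burden falls on the two mixed brackets cancelling it. Using $\pt i x_j = (-1)^{|i||j|}x_j\pt i + \beta_{ij}$, the commutation of $x_j\pt i$ with $\E$, and $\Delta(-\lambda+2\E) = (-\lambda+4+2\E)\Delta$ together with $\Delta\pt i = \pt i\Delta$, one finds after collecting terms
\[
[A_i,B_j] = -2(-1)^{|i||j|}x_j\pt i\Delta + \beta_{ij}(-\lambda+2\E)\Delta,
\]
and, by the same computation with $i,j$ interchanged together with graded antisymmetry and $\beta_{ji}=(-1)^{|i||j|}\beta_{ij}$,
\[
[B_i,A_j] = 2x_i\pt j\Delta - \beta_{ij}(-\lambda+2\E)\Delta.
\]
Substituting into the four-term expansion, the $\beta_{ij}(-\lambda+2\E)\Delta$ contributions from $-[A_i,B_j]$ and $-[B_i,A_j]$ cancel, and the surviving $x_i\pt j\Delta$ and $x_j\pt i\Delta$ terms exactly cancel the $2L_{ij}\Delta$ coming from $[B_i,B_j]$; hence $[\bessel(x_i),\bessel(x_j)] = 0$.

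I expect the main obstacle to be the sign bookkeeping in the two mixed brackets, where one must repeatedly commute an odd or even generator past $\E$ and $\Delta$ while tracking the Koszul signs, and in particular confirm that the two $\beta_{ij}(-\lambda+2\E)\Delta$ terms occur with opposite sign. A secondary technical point is the clean derivation of $[\Delta,x_k]=2\pt k$, which uses both $\sum_b \beta^{ab}\beta_{bk} = \delta_{ak}$ and the supersymmetry of $\beta$ to merge the two Leibniz contributions into the single factor $2$, correctly also for odd $k$. It is worth noting that $\lambda$ drops out of the final cancellation entirely, so the computation in fact shows that $\bessel_\lambda(x_i)$ and $\bessel_\lambda(x_j)$ supercommute for every $\lambda$, not only for $\lambda = 2-M$.
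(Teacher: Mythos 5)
Your computation is correct, and every intermediate identity checks out: $[A_i,A_j]=0$ via $\pt i(-\lambda+2\E)=(-\lambda+2+2\E)\pt i$ and supercommutativity of derivatives; $[B_i,B_j]=2L_{ij}\Delta$ via $[\Delta,x_j]=2\pt j$; and the two mixed brackets $[A_i,B_j]=-2(-1)^{|i||j|}x_j\pt i\Delta+\beta_{ij}(-\lambda+2\E)\Delta$ and $[B_i,A_j]=2x_i\pt j\Delta-\beta_{ij}(-\lambda+2\E)\Delta$, whose $\beta_{ij}$-terms cancel against each other and whose remaining terms cancel $2L_{ij}\Delta$ exactly. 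Your flagged subtlety about $[\Delta,x_k]=2\pt k$ for odd $k$ is also handled correctly: one Leibniz contribution gives $\pt k$ directly from $\sum_j\beta^{ij}\beta_{jk}=\delta_{ik}$, while the other picks up a factor $(-1)^{|k|}$ from the graded antisymmetry of $\beta^{-1}$ on the odd block that is cancelled by the Koszul sign $(-1)^{|j||k|}$, so both contributions are $+\pt k$. However, your route differs from the paper's: the paper gives no proof at all, instead citing Proposition~4.2 of \cite{BC1}, where supercommutativity of Bessel operators is established in the general framework of polynomial realisations attached to Jordan (super)algebras and their TKK construction. Your argument is a self-contained, elementary verification specific to the orthosymplectic form $\beta$, which has two advantages: it keeps the paper independent of the Jordan-theoretic machinery, and it makes visible that $\lambda$ drops out entirely, so $\bessel_\lambda(x_i)$ and $\bessel_\lambda(x_j)$ supercommute for every $\lambda$, not only for the value $\lambda=2-M$ singled out by Proposition~\ref{PropTangBes}. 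What the citation buys instead is generality (the result in \cite{BC1} covers a whole class of Jordan algebras at once) and brevity.
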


\begin{Proposition}[product rule]\label{PropProdRule}
For all $i\in\{0,\ldots, m+2n-1\}$ we have
\begin{gather*}
\bessel(x_i)(\phi\psi) = \bessel(x_i)(\phi)\psi + (-1)^{|i||\phi|}\phi\bessel(x_i)(\psi) \\
\hphantom{\bessel(x_i)(\phi\psi) =}{} + 2(-1)^{|i||\phi|}\E(\phi)\pt i(\psi) +2 \pt i (\phi)\E(\psi) - 2x_i\sum_{r,s}(-1)^{|\phi||r|}\beta^{rs}\pt r(\phi)\pt s(\psi).
\end{gather*}
\end{Proposition}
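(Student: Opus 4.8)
The plan is to prove the identity by direct expansion, exploiting the explicit form $\bessel(x_i) = (-\lambda + 2\E)\pt i - x_i\Delta$ with $\lambda = 2-M$. Since $\pt i$ is an honest graded derivation while $\E$ and $\Delta$ are not, the non-derivation correction terms on the right-hand side must come entirely from $\E$ and $\Delta$. So the strategy is to expand $(-\lambda + 2\E)\pt i(\phi\psi)$ and $x_i\Delta(\phi\psi)$ separately, collect the ``pure'' pieces into $\bessel(x_i)(\phi)\psi$ and $(-1)^{|i||\phi|}\phi\bessel(x_i)(\psi)$, and verify that the leftover cross terms are exactly the three displayed corrections.

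First I would record the Leibniz rules needed. The operator $\pt i$ satisfies $\pt i(\phi\psi) = \pt i(\phi)\psi + (-1)^{|i||\phi|}\phi\pt i(\psi)$, and $\E$, being the degree operator, is an even derivation: $\E(\phi\psi) = \E(\phi)\psi + \phi\E(\psi)$. Applying $(-\lambda + 2\E)\pt i$ to $\phi\psi$ and using these two rules gives, after grouping, the pure terms $\big[(-\lambda+2\E)\pt i(\phi)\big]\psi$ and $(-1)^{|i||\phi|}\phi\big[(-\lambda+2\E)\pt i(\psi)\big]$ together with the two genuine cross terms $2\pt i(\phi)\E(\psi)$ and $2(-1)^{|i||\phi|}\E(\phi)\pt i(\psi)$, which already reproduce the two Euler corrections in the statement.

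The Laplacian contributes the remaining structure. Writing $\Delta = \sum_{r,s}\beta^{rs}\pt r\pt s$ and iterating the first-order rule for $\pt r$ and $\pt s$ yields a four-term second-order Leibniz rule for $\pt r\pt s(\phi\psi)$. The main bookkeeping obstacle is the sign simplification when summing against $\beta^{rs}$: because the form $\bbf{\cdot\,,\cdot}$ is even, $\beta^{rs}$ vanishes unless $|r|=|s|$, which collapses the Koszul signs, and the supersymmetry of the form forces $\beta^{sr} = (-1)^{|r|}\beta^{rs}$ on the surviving block. Relabelling $r\leftrightarrow s$ in one of the two cross sums and using this relation shows that the two cross sums coincide, so that
\begin{gather*}
\Delta(\phi\psi) = \Delta(\phi)\psi + \phi\Delta(\psi) + 2\sum_{r,s}(-1)^{|\phi||r|}\beta^{rs}\pt r(\phi)\pt s(\psi).
\end{gather*}
Multiplying by $-x_i$ then produces $-x_i\Delta(\phi)\psi$, $-x_i\phi\Delta(\psi)$, and precisely the last term of the proposition.

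Finally I would reassemble. The combination $\big[(-\lambda+2\E)\pt i(\phi)\big]\psi - x_i\Delta(\phi)\psi$ is exactly $\bessel(x_i)(\phi)\psi$. For the second Bessel term I would combine $(-1)^{|i||\phi|}\phi\big[(-\lambda+2\E)\pt i(\psi)\big]$ with $-x_i\phi\Delta(\psi)$, using graded commutativity $x_i\phi = (-1)^{|i||\phi|}\phi x_i$ to move $x_i$ across $\phi$ and recognise $-x_i\phi\Delta(\psi) = (-1)^{|i||\phi|}\phi\big(-x_i\Delta(\psi)\big)$; this assembles $(-1)^{|i||\phi|}\phi\bessel(x_i)(\psi)$. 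Together with the two Euler cross terms and the Laplacian cross term identified above, every summand of the claimed identity is accounted for. The only real care needed throughout is sign tracking, concentrated in the supersymmetry argument that merges the two $\Delta$ cross sums.
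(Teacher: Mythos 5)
Your proof is correct. Note that the paper itself does not prove this proposition at all: it imports it verbatim from Proposition~4.2 of the reference [BC1] (Barbier--Coulembier), so your direct expansion is a self-contained verification of something the paper only cites. The computation holds up at every step: $\pt i$ is a parity-$|i|$ derivation, $\E$ is an even derivation (so the Euler cross terms $2\pt i(\phi)\E(\psi)$ and $2(-1)^{|i||\phi|}\E(\phi)\pt i(\psi)$ come out exactly as claimed), and your treatment of the Laplacian is the genuinely delicate part, handled correctly: evenness of the form kills $\beta^{rs}$ unless $|r|=|s|$, which both collapses the Koszul sign $(-1)^{(|r|+|s|)|\phi|}$ in the $\phi\Delta(\psi)$ term and, combined with $\beta^{sr}=(-1)^{|r||s|}\beta^{rs}=(-1)^{|r|}\beta^{rs}$ on the surviving block (the inverse of a supersymmetric even matrix inherits block symmetry/antisymmetry), makes the two second-order cross sums equal after the relabelling $r\leftrightarrow s$, since $(-1)^{|r|}(-1)^{|s|(|r|+|\phi|)}=(-1)^{|r||\phi|}$ when $|r|=|s|$. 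The final reassembly via $x_i\phi=(-1)^{|i||\phi|}\phi x_i$ is also right. One cosmetic remark: your opening sentence says $\E$ is not a derivation, which contradicts your (correct) later use of it as an even derivation; the non-derivation pieces of $\bessel(x_i)$ are really the composites $\E\pt i$ and $x_i\Delta$, not $\E$ itself. This does not affect the argument.
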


As a direct result from the product rule we have
\begin{align}\label{EqComBesx}
[\bessel(x_i), x_j] = \beta_{ij}(M-2+2\E)-2L_{ij},
\end{align}
for all $i, j \in\{0, \ldots, m+2n-1\}$.

In what follows we will mostly use the following slightly modified version of the Bessel operator.
\begin{Definition}\label{Modified Bessel Operators}
The \textit{modified Bessel operator} $\mbessel_\lambda(x_k)$ is given by
\begin{gather*}
\mbessel(x_0) :=-\bessel(x_0), \qquad \mbessel(x_k) :=\bessel(x_k),
\end{gather*}
for $k\in \{1, \ldots , m+2n-1\}$.
\end{Definition}

\subsection{Radial superfunctions}\label{SSRadSub}

Let us denote the supervariables $x_m, \ldots, x_{m+2n-1}$ by $\theta_1, \ldots, \theta_{2n}$. We keep the notations for $x_0, \ldots , x_{m-1}$. Define $\theta^I$ as $\theta_1^{i_1}\theta_2^{i_2} \cdots \theta_n^{i_{2n}}$ for $I = (i_1, i_2, \dots, i_{2n}) \in \mathbb{Z}_2^{2n}$. Consider a function $h\colon \R\rightarrow \R$, $h\in \mc C^{2n}(\R\setminus \{0\})$ and a superfunction $f=f_0+\sum\limits_{I\neq 0}f_I\theta^I$, with $f_0, f_I\in \mc C^{\infty}\big(\R^m\big)$ and where $f_0$ has non-zero values. Then a new superfunction
\begin{gather}\label{EqDefRadSup2}
h(f) := \sum_{j=0}^{2n}\dfrac{1}{j!}\left(\sum_{I\neq 0}f_I\theta^I\right)^jh^{(j)}(f_0)
\end{gather}
is defined in \cite[Definition 3]{CDS}. Here $h^{(j)}$ denotes the $j$th derivative of $h$.

Set the supervariable
\begin{gather*}
r^2 := \sum_{i=1}^{m+2n-1}x^i x_i,
\end{gather*}
so $R^2 = -x_0^2+ r^2$. We can use equation~\eqref{EqDefRadSup2} with $f= \big(x_0^2+r^2\big)/2$ and $h$ the square root to define the superfunction $|X|$ as
\begin{gather*}
|X| := \sqrt{\dfrac{x_0^2 + r^2}{2}}.
\end{gather*}
Using equation~\eqref{EqDefRadSup2} again, but now with $f = |X|$ we define $\exp(|X|)$ and $\Lambda_{2,j}^{\mu,\nu}(|X|)$ as radial superfunctions. Here $\Lambda_{2,j}^{\mu,\nu}$ is the generalised Laguerre function introduced in Appendix \ref{AppLag}.

\subsection[The $(\mf g, \mf k)$-module $W$]{The $\boldsymbol{(\mf g, \mf k)}$-module $\boldsymbol{W}$}\label{SSSchrodRep}

We introduce the notations
\begin{gather*}
\mf g := \TKK(J) \cong \mf{osp}(m,2|2n),\\
\mf k := \{(x, I, -x) \colon I \in \Inn(J),\, x\in J\},\\
\mf k_0 := \mf k \cap \mf{istr}(J) = \Inn(J).
\end{gather*}

\begin{Theorem}The following isomorphisms
\begin{gather*}
\mf k_0 \cong \mf{osp}(m-1,0|2n), \qquad
\mf k \cong \mf{osp}(m,0|2n)\oplus \R,
\end{gather*}
hold as algebras.
\end{Theorem}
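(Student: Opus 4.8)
The plan is to transport everything through the explicit isomorphism $\TKK(J) \cong \mf{osp}(m,2|2n)$ in the differential operator realisation recorded just above the statement, and simply read off the images of $\mf k$ and $\mf k_0$. An element of $\mf k$ has the form $(x, I, -x)$ with $x \in J$ and $I \in \Inn(J)$, where $x$ sits in the $J^-$-slot and $-x$ in the $J^+$-slot. Since $\mf{istr}(J)$ consists exactly of the elements with vanishing $J^\pm$-components, the intersection $\mf k_0 = \mf k \cap \mf{istr}(J)$ is $\Inn(J) = \{(0, I, 0)\}$, as already noted. Because $L_{e_0}$ is the identity and hence central, every inner derivation is a combination of the $[L_{e_i}, L_{e_j}]$ with $i, j \in \{1, \ldots, m+2n-1\}$, so $\Inn(J) = \operatorname{span}\{[L_{e_i}, L_{e_j}] \colon i, j \in \{1, \ldots, m+2n-1\}\}$.

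Applying the isomorphism, $[L_{e_i}, L_{e_j}] \mapsto L_{\tilde i, \tilde j}$, so the image of $\mf k_0$ is $\operatorname{span}\{L_{ab} \colon a, b \in \{1, \ldots, m-1\} \cup \{m+2, \ldots, m+2n+1\}\}$, using the convention $\tilde i = i$ for even $i$ and $\tilde i = i+2$ for odd $i$. Reading off $\overline\beta$ on these coordinates, the even coordinates $1, \ldots, m-1$ carry the positive-definite block of signature $(m-1,0)$ and the coordinates $m+2, \ldots, m+2n+1$ carry the symplectic block, so by the description of the differential operator realisation these $L_{ab}$ form a basis of the orthosymplectic algebra preserving $\overline\beta$ restricted to that coordinate subspace. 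Hence $\mf k_0 \cong \mf{osp}(m-1, 0|2n)$.

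For $\mf k$ itself I compute the images of the remaining generators. Writing an element with $J^-$-component $x$ and $J^+$-component $-x$, for $x = e_i$ with $i \neq 0$ the four relevant lines of the isomorphism give $e_i^- - e_i^+ \mapsto 2L_{\tilde i, 0}$, and for $x = e_0$ they give $e_0^- - e_0^+ \mapsto 2 L_{m, m+1}$. Together with the image of $\mf k_0$, the generators $L_{ab}$ now range over $a, b \in \{0, 1, \ldots, m-1\} \cup \{m+2, \ldots, m+2n+1\}$; since $\overline\beta$ assigns $+1$ to the coordinate $0$ as well, the orthogonal block $0, 1, \ldots, m-1$ is positive-definite of signature $(m,0)$, so these generators form a basis of $\mf{osp}(m, 0|2n)$. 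The one extra generator $L_{m, m+1}$ involves only the coordinates $m, m+1$, which are disjoint from every index occurring above, so it super-commutes with all the other generators and spans a one-dimensional central ideal $\cong \R$. Therefore the image of $\mf k$ is $\mf{osp}(m, 0|2n) \oplus \R L_{m, m+1}$, and $\mf k \cong \mf{osp}(m, 0|2n) \oplus \R$.

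I expect the main points needing care to be bookkeeping rather than conceptual: tracking the shift $\tilde i$ so that the odd coordinates land in $\{m+2, \ldots, m+2n+1\}$, confirming from $\overline\beta$ that $0, 1, \ldots, m-1$ together form a positive-definite (compact-type) block so that one genuinely obtains the forms $\mf{osp}(m-1,0|2n)$ and $\mf{osp}(m,0|2n)$, and checking that the listed images exhaust a basis of the claimed subalgebras, for which a dimension count against $\dim J + \dim \Inn(J)$ serves as a cross-check. The centrality of $L_{m,m+1}$ follows immediately from the disjointness of its indices, and this is precisely what makes the decomposition a direct sum of algebras; note also that because the image is manifestly a subalgebra, this computation simultaneously verifies that $\mf k$ is closed under the bracket.
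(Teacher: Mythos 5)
Your proof is correct. For comparison: the paper does not actually write out a proof of this theorem --- its proof is the single line ``this follows from a straightforward verification by, for example, looking at the matrix realisation of $\mf g$'' --- so your argument supplies exactly the verification the paper leaves to the reader, but carried out in the differential-operator realisation via the explicit $\TKK$-isomorphism of Section~\ref{SSExpReal} rather than in the matrix realisation the paper gestures at. The content is the same in either realisation: one checks that $\Inn(J)$ (spanned by the $[L_{e_i},L_{e_j}]$, $i,j\geq 1$, since $L_{e_0}=\mathrm{id}$) maps onto the copy of $\mathfrak{osp}(m-1,0|2n)$ sitting on the coordinates $\{1,\ldots,m-1\}\cup\{m+2,\ldots,m+2n+1\}$, that the elements $e_i^--e_i^+\mapsto 2L_{\tilde i,0}$ supply precisely the missing root vectors needed to enlarge this to $\mathfrak{osp}(m,0|2n)$ on the positive-definite block $\{0,1,\ldots,m-1\}$ plus the symplectic block, and that $e_0^--e_0^+\mapsto 2L_{m,m+1}$ acts on the complementary pair of negative-definite coordinates and hence spans a central summand $\R$. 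Your route has the advantage that every ingredient ($\overline\beta$, the isomorphism table, the description of the differential-operator realisation of $\mathfrak{osp}$) is displayed explicitly in the paper, so each step is checkable against stated formulas; the matrix realisation would make the signature bookkeeping marginally more visible but requires setting that realisation up first. The points you flag as needing care --- the index shift $\tilde i$, the signature of the restricted form, exhaustion of a basis (with the dimension count $\dim J+\dim\Inn(J)=\dim\mathfrak{osp}(m|2n)+1$ as a cross-check), and centrality of $L_{m,m+1}$ via disjointness of variables --- are indeed the only places where something could go wrong, and you have handled each correctly.
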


\begin{proof}This follows from a straightforward verification by, for example, looking at the matrix realisation of $\mf g$.
\end{proof}

In Section~5.2 of~\cite{BF} the so-called Schr\"odinger model of $\mf g$ was constructed for $M\geq 4$. Since this model will play an important role in this paper, we will give a short recall of its construction.
Firstly, we consider a representation $\pi$ of $\mf g$ acting on smooth superfunctions on~$J^-$. Explicitly, $\pi$ of $\mf g = \TKK(J)=J^-\oplus \mf{istr}(J) \oplus J^+$ is given as follows:
\begin{itemize}\itemsep=0pt
\item $\pi (e_l, 0, 0) = -2\imath x_l $,
\item $\pi (0, L_{e_k}, 0) = -x_0\pt k + x_k\pt 0$,
\item $\pi (0, [L_{e_i}, L_{e_j}], 0) = x_i\pt j - (-1)^{|i||j|}x_j\pt i$,
\item $\pi (0, L_{e_0}, 0) = \frac{2-M}{2}-\E$,
\item $\pi (0, 0, e_l) = -\frac{1}{2}\imath \mbessel(x_l)$,
\end{itemize}
with $i,j,k\in\{1, 2, \ldots, m+2n-1\}$, $l\in\{0, 1, \ldots, m+2n-1\}$ and where $\imath$ is the imaginary unit. For $n=0$, our convention corresponds to the Schr\"odinger realisation given in~\cite{HKMO}. It only differs from the Schr\"odinger realisation given in~\cite{BF} by a change of basis. It can be shown that all the operators occuring in~$\pi$ are tangential to~$R^2$, with $R^2$ as in equation~\eqref{EqR2ELap}. So we can consider the representation obtained by quotienting out $R^2$. This quotient representation has an interesting subrepresentation consisting of $\mf k$-finite vectors. It is generated by $\exp(-2|X|)$, with $\exp(-2|X|)$ the smooth radial superfunction on~$J^-$ as defined in Section~\ref{SSRadSub}. This subrepresentation wil be our Schr\"odinger model. So the Schr\"odinger model is given as follows:
\begin{gather*}
W := U(\mf g) \exp(-2|X|) \mod \big\langle R^2\big\rangle ,
\end{gather*}
where the $\mf g$-module structure is given by the \textit{Schr\"odinger representation}~$\pi$.

Theorem~5.3 of~\cite{BF} gives the following decomposition of $W$.

\begin{Theorem}[decomposition of $W$]\label{ThDecW}
Assume $M\geq 4$.
\begin{itemize}\itemsep=0pt
\item[$(1)$] The decomposition of $W$ as a $\mf k$-module is given by
\begin{gather*}
W =\bigoplus_{j=0}^{\infty}W_j, \qquad \mbox{with}\quad W_j =U(\mf k)\Lambda_{2,j}^{M-3,-1}(2|X|),
\end{gather*}
where $W_j$ and thus also $W$ are $\mf k$-finite.
\item[$(2)$] $W$ is a simple $\mf g$-module.
\item[$(3)$] An explicit decomposition of $W_j$ into irreducible $\mf k_0$-modules is given by
\begin{gather*}
W_j = \bigoplus_{k=0}^j \bigoplus_{l=0}^{1}\Lambda_{2,j-k}^{M-3 + 2k, 2l-1}(2|X|)\big(\mc H_k\big(\R^{m-1|2n}\big)\otimes \mc H_l(\R)\big).
\end{gather*}
Furthermore, if $m$ is even we also have the following $\mf k$-isomorphism
\begin{gather*}
W_j \cong \mc H_j\big(\R^{m|2n}\big)\otimes \mc H_{\frac{M-2}{2}+j}\big(\R^{2}\big).
\end{gather*}
\end{itemize}
\end{Theorem}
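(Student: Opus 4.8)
The plan is to realise $W$ as a cyclic module, generated by $\exp(-2|X|)$, and to organise it by the interplay of three commuting structures: the harmonic degree $j$ for the subalgebra $\mf{osp}(m|2n)\subseteq\mf k$, which will index the summands $W_j$; the angular harmonics for $\mf k_0\cong\mf{osp}(m-1|2n)$ acting on $x_1,\dots,x_{m+2n-1}$; and a radial $\mf{sl}(2)$-triple in the $x_0$-direction. A preliminary remark is that on the quotient by $\big\langle R^2\big\rangle$ one has $x_0^2\equiv r^2$, so any even power of $x_0$ is absorbed into the remaining variables; this is exactly what will restrict the $x_0$-degree label $l$ in part~(3) to $\{0,1\}$. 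I would also record that, under $\pi$, the algebra $\mf k$ is spanned by $\mf k_0$ together with the operators $\pi(e_l,0,-e_l)$ for $l\geq1$ and $\pi(e_0,0,-e_0)$, where the first two families span the $\mf{osp}(m|2n)$-summand and the last spans the central $\R$.

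For part~(1) I would use the $\mf{sl}(2)$-triple $(e_0^-,2L_{e_0},e_0^+)$, transverse to $\mf k$, whose images under $\pi$ are, up to scalars, multiplication by $x_0$, the operator $(2-M)-2\E$, and $\bessel(x_0)$, with bracket $[\bessel(x_0),x_0]=-(M-2)-2\E$ read off from \eqref{EqComBesx} (using $\beta_{00}=-1$, $L_{00}=0$). Because these operators commute with $\mf k_0$, they act only on the radial variable $|X|$ and on the $x_0$-degree and connect the radial seeds $\Lambda_{2,j}^{M-3,-1}(2|X|)$ across different $j$, the shift $j\mapsto j+1$ being the recurrence of the generalised Laguerre functions; starting from the generator $\exp(-2|X|)$, which is the seed $\Lambda_{2,0}^{M-3,-1}(2|X|)$ of $W_0$, this produces all seeds. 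Setting $W_j:=U(\mf k)\Lambda_{2,j}^{M-3,-1}(2|X|)$, the action of $\mf k$ builds the angular content at fixed $j$, the sum $\sum_jW_j$ is direct because $W_j$ carries the $\mf{osp}(m|2n)$-type $\mc H_j\big(\R^{m|2n}\big)$ (distinct for distinct $j$), and each $W_j$ is $\mf k$-finite since it is a finite sum of spherical-harmonic spaces in finitely many variables, finite-dimensional by Proposition~\ref{PropDimH}.

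For part~(3) I would branch $W_j$ to $\mf k_0$. The generators $\pi(e_l,0,-e_l)$, $l\geq1$, raise the harmonic degree in $x_1,\dots,x_{m+2n-1}$ and produce the factors $\mc H_k\big(\R^{m-1|2n}\big)$ (irreducible by Proposition~\ref{PropIrred}, valid as $M\geq4$ excludes $M\in-2\N$), while the $x_0$-degree contributes the label $l\in\{0,1\}$; this gives the bigrading $(k,l)$. On each sector the radial $\mf{sl}(2)$ acts on a factor of angular momentum $k$ and $x_0$-degree $l$, whose effective Laguerre parameters are $\mu=M-3+2k$ and $\nu=2l-1$, identifying the radial factor with $\Lambda_{2,j-k}^{M-3+2k,2l-1}(2|X|)$ and yielding the stated double sum. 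For $m$ even, $\tfrac{M-2}2$ is an integer, so $\mc H_{(M-2)/2+j}(\R^2)$ is a genuine space of spherical harmonics, and I would identify $W_j\cong\mc H_j\big(\R^{m|2n}\big)\otimes\mc H_{(M-2)/2+j}(\R^2)$ as $\mf k$-modules: the $\mf k_0$-branching of the right-hand side is $\bigoplus_{k=0}^j\mc H_k(\R^{m-1|2n})$ tensored with the two-dimensional $\mc H_{(M-2)/2+j}(\R^2)$, matching the $(k,l)$-sum term by term, and the dimensions agree by Propositions~\ref{PropDimH} and~\ref{Prop dimension}.

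Finally, for part~(2), I would use that $W=U(\mf g)\exp(-2|X|)$ is cyclic and argue that every nonzero $\mf g$-submodule $W'$ equals $W$: being $\mf k$-stable, $W'$ is a sum of pieces of the $W_j$ distinguished by the $\mf{osp}(m|2n)$-Casimir, and the transverse $\mf{sl}(2)$ together with $\mf k$ links all these pieces without leaving an invariant subsum, forcing $W'$ to contain the cyclic generator and hence to be all of $W$. The step I expect to be the main obstacle is part~(3): proving rigorously, in the $\Z_2$-graded setting, that the super-Bessel operators act on the (radial)$\times$(angular) vectors with precisely the parameter shifts $\mu=M-3+2k$, $\nu=2l-1$ and the correct signs, and that the $x_0$-direction couples to the $\mf k_0$-branching without spurious terms. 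The non-vanishing of the ladder operators needed for both the direct-sum statement and the simplicity argument is read off from these same Laguerre identities, which is where the real work lies.
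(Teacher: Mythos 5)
You should know at the outset that this paper contains no proof of Theorem~\ref{ThDecW} to compare against: the theorem is imported wholesale from the reference \cite{BF} (the sentence preceding it reads ``Theorem~5.3 of \cite{BF} gives the following decomposition of $W$''). So your attempt can only be measured against that external proof (and its bosonic antecedents in \cite{HKMO,KM}), whose general strategy your outline does mirror: Laguerre-function seeds for the $\mf k$-types, the transverse $\mf{sl}(2)$-triple $(e_0^-,2L_{e_0},e_0^+)$ acting as ladder operators, branching over $\mf k_0$, and recurrence/non-vanishing arguments for simplicity. Your algebraic preliminaries are also accurate: $\pi(e_0^-)=-2\imath x_0$ and $\pi(0,0,e_0)=\frac{\imath}{2}\bessel(x_0)$, the bracket $[\bessel(x_0),x_0]=-(M-2+2\E)$ from \eqref{EqComBesx}, the identification $\exp(-2|X|)\propto\Lambda_{2,0}^{M-3,-1}(2|X|)$ via $\widetilde K_{-1/2}(t)=\frac{\sqrt{\pi}}{2}e^{-t}$, and the observation that $x_0^2\equiv r^2$ modulo $\big\langle R^2\big\rangle$ is what confines the label $l$ to $\{0,1\}$.

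Nevertheless, as a proof the proposal has genuine gaps, and they are not peripheral. First, everything that makes the theorem true is deferred: you never derive how multiplication by $x_0$ and the operator $\bessel(x_0)$ act on products $\Lambda_{2,a}^{\mu,\nu}(2|X|)\,h_k\,x_0^l$, and this is not routine bookkeeping. Multiplication by $x_0$ is \emph{not} a radial operation: it flips the parity label $l$ and shifts the K-Bessel parameter $\nu$ by $\pm 2$. Concretely, the generating function of Appendix~\ref{AppLag} together with $\widetilde K_{\pm 1/2}$ gives the identity $t\,\Lambda_{2,a}^{\mu,1}(t)=2\,\Lambda_{2,a}^{\mu,-1}(t)$ at the \emph{same} index $a$; so the ladder ``$j\mapsto j+1$'' you invoke is really a composite of $\nu$-shifting identities whose coefficients must be computed explicitly, both to see that all seeds $\Lambda_{2,j}^{M-3,-1}(2|X|)$ are reached from $\exp(-2|X|)$ and to see that the double sum in (3) is actually direct --- note that the displayed identity shows the $l=0$ and $l=1$ summands would literally coincide if one were allowed to replace $x_0$ by $|X|$, so directness hinges on exactly the distinctions ($x_0$ versus $|X|$ modulo $\big\langle R^2\big\rangle$) that your sketch suppresses. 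Second, your argument that $\sum_j W_j$ is direct is circular as ordered: it appeals to the $\mf{osp}(m|2n)$-type $\mc H_j\big(\R^{m|2n}\big)$ of $W_j$, which is only available once (3) (or the $m$-even isomorphism) has been established. Third, part (2) --- the hardest claim --- is given no mechanism: simplicity requires the non-vanishing of the matrix coefficients of $\pi(e_0^{\pm})$ between consecutive $\mf k$-types, which again comes only from the explicit Laguerre recurrences; the phrase ``links all these pieces without leaving an invariant subsum'' is a restatement of the claim, not an argument. In short, your plan is the right plan --- it is essentially the one carried out in \cite{BF} --- but the proposal postpones precisely the computations that constitute the proof, as you yourself concede in your closing paragraph.
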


\subsection{The integral and sesquilinear form}\label{SSIntC}

In Section~8 of~\cite{BF} an integral which restricts to $W$ was constructed. We will use a renormalized version of that integral, restricted to $x_0>0$. To give the integral explicitly we consider spherical coordinates in $\R^{m-1}$ by setting $x_i = \omega_i s$ with $\omega_i \in \mathbb S^{m-2}$, $i\in\{1, \ldots, m-1\}$ and
\begin{gather*}
s^2 := \sum_{i=1}^{m-1}x_i^2.
\end{gather*}
We also introduce the following notations
\begin{gather*}
\theta^2 := \sum_{i,j=m}^{m+2n-1}\beta^{ij}x_ix_j, \qquad 1+\eta := \sqrt{1-\dfrac{\theta^2}{2s^2}}, \qquad 1+\xi := \sqrt{1+\dfrac{\theta^2}{2x_0^2}}
\end{gather*}
and the morphism
\begin{gather*}
\phi^\sharp (f) := \sum_{j=0}^n \dfrac{\theta^{2j}}{j!}\left(\dfrac{1}{4x_0}\pt {x_0} - \dfrac{1}{4s}\pt s\right)^j(f),
\end{gather*}
for $f\in \mc C^{\infty}\big(\R^m\setminus \{0\}\big)\otimes \Lambda\big(\R^{2n}\big)$. We remark that in \cite[Lemma~8.2]{BF} it is shown that $\phi^\sharp$ is actually an algebra isomorphism. The Berezin integral on $\Lambda\big(\R^{2n}\big)$ is defined as
\begin{gather*}
\int_B := \partial^{m+2n-1}\partial^{m+2n-2}\cdots \partial^{m}.
\end{gather*}

\begin{Definition}\label{DefIntW}
Suppose $M\geq 4$. For $f\in W$ we define the integral $\int_W$ by
\begin{gather}\label{EqExplW}
\int_W f := \dfrac{1}{\gamma} \int_0^\infty\int_{\mathbb S^{m-2}}\int_B\rho^{m-3}(1+\eta)^{m-3}(1+\xi)^{-1}\phi^\sharp(f)_{|s=x_0=\rho}\mathrm{d}\rho \mathrm{d}\omega,
\end{gather}
where $\gamma\in \C$ is the renormalisation factor such that $\int_W \exp(-4|X|)=1$.
\end{Definition}

We can show that the integral is well defined modulo $\big\langle R^2\big\rangle $. This follows from Section~8 of~\cite{BF} together with the fact that~$\gamma$ is non-zero.

\begin{Proposition}
For $M = m-2n\geq 4$ we have
\begin{gather*}
\gamma = \dfrac{2^{5-2M}}{n!} \left(\dfrac{3-m}{2}\right)_n \dfrac{\pi^{\frac{m-1}{2}}}{\Gamma{\big(\frac{m-1}{2}\big)}}\Gamma(M-2),
\end{gather*}
where we used the Pochhammer symbol $(a)_k = a(a+1)(a+2)\cdots (a+k-1)$. Note that $\big(\frac{3-m}{2}\big)_n=0$ implies $M = m-2n\leq 1$. Therefore~$\gamma$ is non-zero.
\end{Proposition}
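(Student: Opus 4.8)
The quantity $\gamma$ is fixed by the normalisation $\int_W\exp(-4|X|)=1$, so by Definition~\ref{DefIntW} it equals the value of the unnormalised triple integral in~\eqref{EqExplW} evaluated on $f=\exp(-4|X|)$. The plan is therefore to compute that integral explicitly. The first simplification is that $\phi^\sharp$ acts trivially on $\exp(-4|X|)$: since $\exp(-4|X|)$ is, by~\eqref{EqDefRadSup2}, a power series in $\theta^2$ whose coefficients are functions of $x_0^2+s^2$, and since the operator $\tfrac{1}{4x_0}\pt{x_0}-\tfrac{1}{4s}\pt{s}$ annihilates every function of $x_0^2+s^2$, only the $j=0$ term of $\phi^\sharp$ survives, giving $\phi^\sharp(\exp(-4|X|))=\exp(-4|X|)$. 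Evaluating at $s=x_0=\rho$ and using $r^2=s^2+\theta^2$ then yields the clean integrand factor $\exp(-4|X|)_{|s=x_0=\rho}=\exp\big({-}4\rho(1+\xi)\big)$, where $(1+\xi)_{|x_0=\rho}=\sqrt{1+\theta^2/(2\rho^2)}$ and likewise $(1+\eta)_{|s=\rho}=\sqrt{1-\theta^2/(2\rho^2)}$.

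Next I would carry out the three integrations in turn. The Berezin integral isolates the top Grassmann component: writing $\tau=\theta^2/(2\rho^2)$, the $\theta$-dependence of the integrand is
\[
(1-\tau)^{\frac{m-3}{2}}(1+\tau)^{-\frac12}\exp\big({-}4\rho\sqrt{1+\tau}\big),
\]
and $\int_B$ extracts the coefficient of $(\theta^2)^n$. Swapping this coefficient extraction with the elementary radial integral $\int_0^\infty\rho^{M-3}e^{-4\rho\sqrt{1+\tau}}\,\mathrm d\rho=\Gamma(M-2)\,4^{-(M-2)}(1+\tau)^{-\frac{M-2}{2}}$ (valid since $M\geq 4$) collapses the powers of $(1+\tau)$ and reduces the problem to the single coefficient $[\tau^n]\big((1-\tau)^{\frac{m-3}{2}}(1+\tau)^{-\frac{M-1}{2}}\big)$. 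Because the two exponents differ by exactly $n-1$, this coefficient telescopes to $(-2)^n\binom{(m-3)/2}{n}=2^n\big(\tfrac{3-m}{2}\big)_n/n!$, which is precisely where the Pochhammer symbol and the $1/n!$ in the claimed formula originate. The remaining angular integral contributes the area factor of $\mathbb{S}^{m-2}$, producing $\pi^{\frac{m-1}{2}}/\Gamma\big(\tfrac{m-1}{2}\big)$, and collecting all powers of $2$ gives the prefactor $2^{5-2M}$. The delicate point, and the step most prone to error, is the precise bookkeeping of constants: the normalisation of the Berezin integral on $(\theta^2)^n$, the interplay of the $2\rho^2$ rescaling hidden in $\tau$ with the surviving weight $\rho^{m-3-2n}=\rho^{M-3}$, and the convention for the measure $\mathrm d\omega$ must all be tracked carefully so that the factors assemble into exactly the stated closed form.

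Finally, the non-vanishing of $\gamma$ is immediate once the formula is established. We have $\big(\tfrac{3-m}{2}\big)_n=\prod_{k=0}^{n-1}\tfrac{3-m+2k}{2}$, which vanishes precisely when $m=3+2k$ for some $0\le k\le n-1$; any such $m$ satisfies $m\le 2n+1$ and hence $M=m-2n\le 1$, contradicting $M\geq 4$. Thus no factor of the Pochhammer symbol vanishes, while $2^{5-2M}$, $\pi^{\frac{m-1}{2}}/\Gamma\big(\tfrac{m-1}{2}\big)$ and $\Gamma(M-2)$ are manifestly finite and nonzero for $M\geq 4$. Therefore $\gamma\neq 0$, which in turn guarantees that the renormalised integral $\int_W$ is well defined modulo $\big\langle R^2\big\rangle$.
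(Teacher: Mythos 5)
Your route is genuinely different from the paper's: the paper never recomputes the integral, but reduces it via $\widetilde K_{-\frac{1}{2}}(t)=\frac{\sqrt{\pi}}{2}e^{-t}$ to the known integral of \cite[Lemma~6.3.5]{PhDB} and then rewrites the resulting quotient of Gamma factors with Legendre's duplication formula; you instead evaluate the triple integral of Definition~\ref{DefIntW} from scratch. Much of your outline is correct and nontrivial: $\phi^\sharp$ does act as the identity on $\exp(-4|X|)$ (its coefficients are functions of $x_0^2+s^2$, which $\tfrac{1}{4x_0}\pt{x_0}-\tfrac{1}{4s}\pt{s}$ annihilates), the radial integral is elementary for $M\geq 4$, the interchange of $[\tau^n]$-extraction with $\int_0^\infty\mathrm{d}\rho$ is harmless since the integrand is a finite $\tau$-expansion with coefficients of the form (polynomial in $\rho$)${}\times e^{-4\rho}$, and the identity you call ``telescoping'' is true: for $a-b=n-1$ one has $[\tau^n]\big((1-\tau)^{a}(1+\tau)^{-b}\big)=(-2)^n\binom{a}{n}$, provable for instance by the substitution $u=2\tau/(1+\tau)$ in Cauchy's coefficient formula. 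You assert this identity without proof, which is a hole, but a fixable one.

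The genuine gap is the one you flag and then skip: the proposition \emph{is} the exact constant, and your argument never pins it down. Everything hinges on the value of $\int_B(\theta^2)^n$, which your sketch implicitly takes to be $1$ (you say the coefficient $2^n\big(\tfrac{3-m}{2}\big)_n/n!$ is ``precisely where the Pochhammer symbol and the $1/n!$ originate'' and that collecting powers of $2$ gives $2^{5-2M}$). Under the paper's stated conventions this is not so: the standardized $\beta$ gives $\theta^2=2\sum_{a=1}^n\theta_a\theta_{n+a}$, hence
\begin{gather*}
(\theta^2)^n=2^n\,n!\,(-1)^{n(n-1)/2}\,\theta_1\theta_2\cdots\theta_{2n},
\qquad
\int_B(\theta^2)^n=(-1)^{n(n-1)/2}\,2^n\,n!,
\end{gather*}
so the $n!$ produced by the Berezin integral cancels your $1/n!$ and an overall factor $(-1)^{n(n-1)/2}2^n n!$ remains unaccounted for relative to the stated formula. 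The problem is visible already at $n=1$: there $\int_B\theta^2=2$, and carrying out your own outline literally yields $\gamma=(3-m)\,2^{5-2M}\,\pi^{\frac{m-1}{2}}\,\Gamma(M-2)/\Gamma\big(\tfrac{m-1}{2}\big)$, whereas the proposition asserts $\tfrac{3-m}{2}\,2^{5-2M}\,\pi^{\frac{m-1}{2}}\,\Gamma(M-2)/\Gamma\big(\tfrac{m-1}{2}\big)$. Resolving this mismatch requires identifying the normalization conventions under which equation~\eqref{EqExplW} was derived in \cite{BF} and \cite{PhDB} --- exactly the bookkeeping you declared ``most prone to error'' and left undone, and precisely what the paper sidesteps by quoting the thesis computation wholesale. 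Until that is settled, your computation does not establish the stated closed form. Your closing paragraph on the non-vanishing of $\gamma$ is correct and agrees with the paper's remark.
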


\begin{proof}Let us denote the integral $\int_W$ not normalized by $\gamma$ as $\int_{W'}$, i.e., $\int_{W'} = \gamma \int_W$. We wish to calculate
\begin{gather*}
\gamma = \int_{W'}\exp(-4|X|).
\end{gather*}
In \cite[Lemma 6.3.5]{PhDB} a similar integral is calculated if one observes that
$\widetilde K_{-\frac{1}{2}}(t) = \frac{\sqrt{\pi}}{2}\exp(-t)$, (equation~\eqref{Eq Bessel is exp} in Appendix~\ref{AppBes})
where $\widetilde K_\alpha$ is the K-Bessel function introduced in Appendix~\ref{AppBes}.
Using the same calculations as in the proof of \cite[Lemma~6.3.5]{PhDB}, we then obtain
\begin{gather*}
\gamma = \dfrac{1}{n!} \left(\dfrac{3-m}{2}\right)_n \dfrac{\pi^{\frac{m-3}{2}}}{\Gamma{\big(\frac{m-1}{2}\big)}}\dfrac{\Gamma\big(\frac{M}{2}\big) \Gamma\big(\frac{M}{2}-1\big)\Gamma\big(\frac{M-1}{2}\big)^2}{\Gamma(M-1)},
\end{gather*}
provided we take into account that we need extra factors $2$ in certain places and that we restrict ourselves to $x_0>0$. Note that in \cite[Lemma~6.3.5]{PhDB} the tilde in $\widetilde K_{-\frac{1}{2}}$ sometimes mistakenly disappears. If we use Legendre's duplication formula:
\begin{gather*}
\Gamma\left(\dfrac{z+1}{2}\right)\Gamma\left(\dfrac{z}{2}\right) = 2^{1-z}\sqrt{\pi}\Gamma(z),
\end{gather*}
on $\Gamma\big(\frac{M}{2}\big)\Gamma\big(\frac{M-1}{2}\big)$ and $\Gamma\big(\frac{M-1}{2}\big)\Gamma\big(\frac{M}{2}-1\big)$ the result follows.
\end{proof}

\begin{Definition}For $f,g\in W$ we define the sesquilinear form $\ip{\cdot\, ,\cdot}$ as
\begin{gather*}
\ip{f,g} := \int_W f\overline{g}.
\end{gather*}
\end{Definition}

Theorem~8.13 and Lemma~8.14 in \cite{BF} give us the following two properties.

\begin{Proposition}\label{PropSkewSymPi}
Suppose $M = m-2n \geq 4$. The Schr\" odinger representation $\pi$ on $W$ is skew-supersymmetric with respect to $\ip{\cdot\, ,\cdot}$, i.e.,
\begin{gather*}
\ip{\pi(X)f,g} = - (-1)^{|X||f|}\ip{f,\pi(X)g},
\end{gather*}
for all $X\in \mathfrak{g}$ and $f,g\in W$.
\end{Proposition}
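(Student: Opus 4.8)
The plan is to recast the claim as a statement about a super-adjoint and then reduce it to a small generating set. For a homogeneous operator $T$ on $W$, define its super-adjoint $T^\dagger$ by $\ip{Tf,g} = (-1)^{|T||f|}\ip{f,T^\dagger g}$; the assertion is then exactly $\pi(X)^\dagger = -\pi(X)$ for all homogeneous $X\in\mf g$. Since $\ip{\cdot\,,\cdot}$ is sesquilinear, $T\mapsto T^\dagger$ is a graded anti-homomorphism, $(ST)^\dagger = (-1)^{|S||T|}T^\dagger S^\dagger$, and it conjugates scalars, $(\lambda\,\mathrm{id})^\dagger = \bar\lambda\,\mathrm{id}$. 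A short sign computation using these two facts together with $\pi([X,Y]) = [\pi(X),\pi(Y)]$ shows that the super bracket of two skew operators is again skew. Hence $\{X\in\mf g : \pi(X)^\dagger = -\pi(X)\}$ is a graded subspace closed under the bracket, i.e.\ a Lie sub-superalgebra of $\mf g$, and it suffices to verify the identity on a generating set.

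Because $\mf g = \TKK(J) = J^-\oplus\mf{istr}(J)\oplus J^+$ is $3$-graded and $J$ has a unit $e_0$, the relation $[x,u] = 2L_{xu}+2[L_x,L_u]$ with $u=e_0$ gives $[J^+,J^-] = \mf{istr}(J)$, so $\mf g$ is generated by $J^-\oplus J^+$. Thus I only need the two families $\pi(e_l,0,0) = -2\imath x_l$ and $\pi(0,0,e_l) = -\tfrac12\imath\,\mbessel(x_l)$. For each of these the imaginary scalar flips the sign under $\dagger$: since $(\imath\,\mathrm{id})^\dagger = -\imath\,\mathrm{id}$, skew-supersymmetry of $-2\imath x_l$ is equivalent to super-\emph{symmetry} of multiplication by $x_l$, namely $\ip{x_lf,g} = (-1)^{|l||f|}\ip{f,x_lg}$, and skew-supersymmetry of $-\tfrac12\imath\,\mbessel(x_l)$ is equivalent to $\ip{\mbessel(x_l)f,g} = (-1)^{|l||f|}\ip{f,\mbessel(x_l)g}$. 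As a consistency check, the even generator $\pi(0,L_{e_0},0) = \tfrac{2-M}{2}-\E$ then needs no separate treatment, being $\tfrac12[\pi(0,0,e_0),\pi(e_0,0,0)]$.

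These two super-symmetry statements are the analytic content supplied by Theorem~8.13 and Lemma~8.14 of~\cite{BF}. Since skew-supersymmetry \emph{for all} $X\in\mf g$ is unchanged by a linear change of basis of $\mf g$ and by the nonzero renormalisation $\gamma$ of the integral, and our $\pi$ and $\ip{\cdot\,,\cdot}$ differ from the realisation of~\cite{BF} only in these harmless ways, the result transfers to the present normalisation. I expect the super-symmetry of the modified Bessel operator $\mbessel(x_l)$ to be the real obstacle: being second order, establishing $\ip{\mbessel(x_l)f,g} = (-1)^{|l||f|}\ip{f,\mbessel(x_l)g}$ requires an integration-by-parts argument against the explicit weighted radial integral of Definition~\ref{DefIntW}, carried out modulo the ideal $\big\langle R^2\big\rangle$ on which the form is only well defined, which is precisely where the hypothesis $M\geq 4$ (ensuring convergence and vanishing boundary terms) is used.
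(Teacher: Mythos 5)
Your argument is correct in substance, but it takes a genuinely different route from the paper, which offers no independent proof at all: it imports the statement wholesale from Theorem~8.13 and Lemma~8.14 of \cite{BF}, adding only the remark that those results carry an extra hypothesis ($M$ even) which can be dropped here because $q=2$ is exactly the exceptional case of \cite{BF} in which the proofs go through unchanged. Your reduction is sound: non-degeneracy of $\ip{\cdot\,,\cdot}$ (the companion proposition) makes the super-adjoint well defined, the sign identity $(ST)^\dagger=(-1)^{|S||T|}T^\dagger S^\dagger$ does show that skew operators are closed under the super bracket, and $[J^+,J^-]=\mf{istr}(J)$ (via $[x,e_0^-]=2L_x$, using that $e_0$ is the unit so $L_{e_0}=\mathrm{id}$, together with $[L_x,L_u]=\tfrac12[x,u]-L_{xu}$) reduces everything to the operators $x_l$ and $\mbessel(x_l)$; supersymmetry of multiplication by $x_l$ is then immediate from supercommutativity under $\int_W$. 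The one caveat is your final citation: the supersymmetry $\int_W(\bessel(x_k)f)g=(-1)^{|k||f|}\int_W f(\bessel(x_k)g)$ is not what Theorem~8.13 and Lemma~8.14 of \cite{BF} supply --- those two results are precisely the proposition you are proving and its companion on the form, so quoting them for your reduced facts makes the reduction redundant rather than wrong; the correct reference is \cite[Proposition~8.9]{BF}, which this paper itself invokes later (in the Hermite-to-monomial proposition and in Case~3 of Appendix~\ref{AppIP}). With that citation your route is arguably preferable: it confines the external analytic debt to a single integration-by-parts lemma and makes visible exactly where $M\geq 4$ and the integral enter, whereas the paper's wholesale citation forces it to address the hypothesis mismatch with \cite{BF} (evenness of $M$, plus the renormalisation and the restriction of the integral to $x_0>0$) at the level of the full theorem --- a point your transfer paragraph, which covers only the change of basis and the nonzero factor $\gamma$, quietly glosses over.
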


\begin{Proposition}
Suppose $M = m-2n \geq 4$. The form $\ip{\cdot\, ,\cdot}$ defines a sesquilinear, non-degenerate form on $W$, which is superhermitian, i.e.,
\begin{gather*}
\ip{f,g} = (-1)^{|f||g|}\ipbar{g,f},
\end{gather*}
for all $f,g\in W$.
\end{Proposition}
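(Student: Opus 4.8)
The plan is to establish the three assertions—sesquilinearity, superhermitian symmetry, and non-degeneracy—separately, reading the first two off the explicit integral \eqref{EqExplW} and reserving a representation-theoretic argument for the last. \emph{Sesquilinearity} is immediate: the integral $\int_W$ is $\C$-linear while complex conjugation $g\mapsto\overline g$ is conjugate-linear, so $\ip{f,g}=\int_W f\overline g$ is $\C$-linear in $f$ and conjugate-linear in $g$.

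For the \emph{superhermitian} identity I would first record the real structure used for conjugation on superfunctions: it is the conjugate-linear algebra homomorphism fixing every coordinate function, so that $\overline{ab}=\overline a\,\overline b$ and $\overline{x_i}=x_i$ for all $i$. The one genuinely technical point is that this conjugation commutes with the integral, i.e.\ $\overline{\int_W h}=\int_W\overline h$: the normalisation constant $\gamma$ and all the scalar weights $\rho^{m-3}(1+\eta)^{m-3}(1+\xi)^{-1}$ in \eqref{EqExplW} are real, the isomorphism $\phi^\sharp$ is assembled from the real operations $\partial_{x_0}$, $\partial_s$ and multiplication by $\theta^{2j}$, and the Berezin integral $\int_B$ is order-preserving under a conjugation that fixes the $\theta_k$, so $\int_B\overline h=\overline{\int_B h}$. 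Granting this, I compute $\overline{\ip{g,f}}=\overline{\int_W g\overline f}=\int_W\overline{g\overline f}=\int_W \overline g\, f$, and then graded-commute $\overline g$ past $f$ inside the supercommutative algebra, $\overline g\, f=(-1)^{|f||g|}f\overline g$, to obtain $\overline{\ip{g,f}}=(-1)^{|f||g|}\ip{f,g}$, which is exactly $\ip{f,g}=(-1)^{|f||g|}\ipbar{g,f}$. Thus the superhermitian sign is produced entirely by super-commutativity; both sides vanish unless $|f|=|g|$, since $\int_W$ annihilates superfunctions of odd total parity. I expect the sign bookkeeping for $\int_B$ (equivalently, pinning down the real structure on the odd variables) to be the main obstacle here.

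For \emph{non-degeneracy} I would avoid computing the form on the $\mf{k}$-decomposition and instead use that $W$ is a simple $\mf{g}$-module (Theorem~\ref{ThDecW}(2)) together with the skew-supersymmetry of $\pi$ (Proposition~\ref{PropSkewSymPi}). Let $\mathcal N=\{f\in W:\ip{f,g}=0\text{ for all }g\in W\}$ be the left radical, a $\C$-subspace. For $X\in\mf{g}$ and $f\in\mathcal N$, skew-supersymmetry gives $\ip{\pi(X)f,g}=-(-1)^{|X||f|}\ip{f,\pi(X)g}=0$ for every $g$, so $\pi(X)f\in\mathcal N$ and $\mathcal N$ is a $\mf{g}$-submodule. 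By simplicity $\mathcal N\in\{0,W\}$. To rule out $\mathcal N=W$ it suffices to exhibit one nonzero value: the cyclic vector $\exp(-2|X|)$ is self-conjugate (all coordinate functions, and hence $\theta^2$ and $|X|$, are fixed by conjugation), so $\ip{\exp(-2|X|),\exp(-2|X|)}=\int_W\exp(-4|X|)=1\neq 0$ by the normalisation in Definition~\ref{DefIntW}. Hence $\mathcal N=0$, and the right radical vanishes as well by the superhermitian symmetry just proved, so $\ip{\cdot\,,\cdot}$ is non-degenerate.
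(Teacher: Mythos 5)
Your proposal is correct, but it takes a genuinely different route from the paper, which offers no argument of its own for this proposition: the paper simply quotes it from Theorem~8.13 and Lemma~8.14 of \cite{BF}, the only added content being the remark that the evenness hypothesis on $M$ there can be dropped since the situation at hand corresponds to $q=2$. Your proof, by contrast, is self-contained modulo results already stated earlier in the paper. The sesquilinearity and superhermitian computations are sound, with the one caveat you yourself flag: everything hinges on taking the conjugation to be the conjugate-linear algebra homomorphism fixing all generators (so $\overline{\theta^I}=\theta^I$ with no reordering sign) and on the identity $\overline{\int_W h}=\int_W\overline{h}$; your reasons for the latter (reality of $\gamma$ and of the weights in \eqref{EqExplW}, reality of $\phi^\sharp$, compatibility of $\int_B$ with this conjugation) are the right ones, but these conventions and checks are exactly the material the paper outsources to \cite{BF}, so this part of your argument re-proves rather than circumvents the cited lemma. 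The genuine gain is your non-degeneracy argument: instead of computing the form on the decomposition of $W$, you observe that by Proposition~\ref{PropSkewSymPi} the radical $\mathcal N$ is $\pi(\mf g)$-stable, hence by simplicity (Theorem~\ref{ThDecW}(2)) equals $0$ or $W$, and the latter is excluded because $\ip{\exp(-2|X|),\exp(-2|X|)}=\int_W\exp(-4|X|)=1$ by the normalisation in Definition~\ref{DefIntW} together with multiplicativity of the radial calculus, $\exp(-2|X|)\exp(-2|X|)=\exp(-4|X|)$. This is clean, and it is conceptually parallel to how the paper itself exploits irreducibility plus skew-supersymmetry later, in Proposition~\ref{Prop inverse SB transform} and Theorem~\ref{PropUnitSB}. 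Two small points would tighten it: to invoke simplicity you should note that $\mathcal N$ is a graded subspace, which follows from your own observation that $\ip{f,g}=0$ whenever $|f|\neq|g|$; and the assertion $\overline{\exp(-2|X|)}=\exp(-2|X|)$ deserves the one-line justification that all coefficients in the radial expansion \eqref{EqDefRadSup2} of this superfunction are real.
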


Note that for both Theorem~8.13 and Lemma~8.14 in \cite{BF} there is an extra condition saying that $M$ must be even. However, since we are working in the exceptional case that corresponds with $q=2$ in \cite{BF}, the proofs still hold without this extra condition.

\section{The Fock space}\label{The Fock space}
In \cite[Section 2.3]{HKMO} an inner product on the polynomial space $\mc P(\C^m)$ was introduced, namely the Bessel--Fischer inner product
\begin{gather*}
\bfip{p,q} := p\big(\mbessel\big)\bar q(z)\big|_{z=0},
\end{gather*}
where $\bar q(z) = \overline{q(\bar z)}$ is obtained by conjugating the coefficients of the polynomial $q$. In \cite[Pro\-po\-si\-tion~2.6]{HKMO} it is proven that, for polynomials, the Bessel--Fischer inner product is equal to the $L^2$-inner product of the Fock space. Since there is no immediate extension of this $L^2$-inner product to the super setting, we will use the Bessel--Fischer inner product on polynomials as the starting point to generalize the Fock space to superspace.

\subsection{Definition and properties}

\begin{Definition}\label{DefFock}
We define the \textit{polynomial Fock space} as the superspace
\begin{gather*}
\Fock := \mathcal{P}\big(\C^{m|2n}\big)/\big\langle R^2\big\rangle.
\end{gather*}
\end{Definition}
From the generalised Fischer decomposition of Proposition \ref{PropFDgen} it follows that
\begin{gather*}
\Fock \cong\bigoplus_{l=0}^\infty \gsh_l\big(\C^{m|2n}\big).
\end{gather*}
In particular, if the superdimension $M=m-2n$ is such that $M\not\in -2\N$, then the Fischer decomposition from Proposition \ref{LemFC} gives us
\begin{align*}
\Fock \cong\bigoplus_{l=0}^\infty \mathcal{H}_l\big(\C^{m|2n}\big).
\end{align*}

\begin{Definition}For $p, q\in \mathcal P\big(\C^{m|2n}\big)$ we define the \textit{Bessel--Fischer product} of $p$ and $q$ as
\begin{gather*}
\bfip{p,q} := p\big(\mbessel\big)\bar q(z)\big|_{z=0},
\end{gather*}
where $\bar q(z) = \overline{q(\bar z)}$ is obtained by conjugating the coefficients of the polynomial~$q$ and~$\mbessel$ is the complex version of the modified Bessel operators introduced in Definition~\ref{Modified Bessel Operators}.
\end{Definition}

Explicitly for $p=\sum_\alpha a_\alpha z^\alpha$ and $q=\sum_\beta b_\beta z^\beta$ we have
\begin{gather*}
\bfip{p,q} = \sum_{\alpha, \beta}a_\alpha \bar b_\beta (-1)^{\alpha_0}\bessel(z_0)^{\alpha_0}\cdots\bessel(z_{m+2n-1})^{\alpha_{m+2n-1}}z_0^{\beta_0}\cdots z_{m+2n-1}^{\beta_{m+2n-1}}\big|_{z=0}.
\end{gather*}
Note that it is only an inner product in the bosonic case. However, in \cite{dGM} a new definition of Hilbert superspaces was introduced where the preserved form is no longer an inner product, but rather a non-degenerate, sesquilinear, superhermitian form. We will prove that the Bessel--Fischer product is such a form when restricted to $\Fock$ with $M-2\not\in -2\N$.

\begin{Proposition}[sesquilinearity]
For $p, q, r, s\in \mathcal P\big(\C^{m|2n}\big)$ and $\alpha, \beta, \gamma, \delta\in\C$ we have
\begin{gather*}
\bfip{\alpha p+ \gamma r, \beta q + \delta s} = \alpha\bar\beta \bfip{p, q} + \alpha\bar\delta \bfip{p, s} + \gamma\bar\beta \bfip{r, q} + \gamma\bar\delta \bfip{r, s}.
\end{gather*}
\end{Proposition}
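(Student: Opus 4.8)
The plan is to read the asserted identity directly off the explicit expansion of the Bessel--Fischer product displayed just above the statement. Writing $p=\sum_\alpha a_\alpha z^\alpha$ and $q=\sum_\beta b_\beta z^\beta$, that expansion exhibits $\bfip{p,q}$ as a double sum that is manifestly linear in the coefficients $a_\alpha$ of the first argument and conjugate-linear in the coefficients $b_\beta$ of the second (the latter entering only through $\bar b_\beta$). Concretely, I would view the product as the composition of three elementary maps: the substitution $p\mapsto p(\mbessel)$ sending a superpolynomial to a differential operator, the coefficient-conjugation $q\mapsto\bar q$, and the map taking an operator $D$ and a polynomial $g$ to $(Dg)|_{z=0}$. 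It then suffices to check that the first map is $\C$-linear, the second conjugate-linear, and the third bilinear.

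For the first map, the only point needing care is that the $\mbessel(z_i)$ merely supercommute rather than commute, so $p(\mbessel)$ could a priori depend on how the monomials of $p$ are ordered. This is resolved by the convention fixed in the explicit formula, where the factors $\bessel(z_0)^{\alpha_0}\cdots\bessel(z_{m+2n-1})^{\alpha_{m+2n-1}}$ are always written in increasing index order; with this convention $p(\mbessel)$ is unambiguously determined by the coefficients $a_\alpha$, and $(\alpha p+\gamma r)(\mbessel)=\alpha\,p(\mbessel)+\gamma\,r(\mbessel)$ holds termwise. For the second map, writing $s=\sum_\beta c_\beta z^\beta$ one has $\overline{\beta q+\delta s}=\bar\beta\,\bar q+\bar\delta\,\bar s$, since $\bar q(z)=\overline{q(\bar z)}=\sum_\beta \bar b_\beta z^\beta$ is obtained by conjugating coefficients and coefficient-conjugation is conjugate-linear. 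The third map is bilinear because both operator application and evaluation at $z=0$ are linear in each slot.

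Combining the three observations, substituting $\alpha p+\gamma r$ and $\beta q+\delta s$ into the defining formula and expanding yields exactly the four terms on the right-hand side. I do not expect any genuine obstacle here: the statement is a formal consequence of the definition, and the only subtlety is the ordering convention underlying the well-definedness of $p\mapsto p(\mbessel)$, which the explicit formula already settles.
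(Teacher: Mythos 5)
Your proposal is correct and takes essentially the same route as the paper, whose entire proof is the remark that sesquilinearity follows from the linearity of the Bessel operators; your version simply spells out the same direct verification from the definition (linearity of $p\mapsto p(\mbessel)$, conjugate-linearity of coefficient conjugation, and linearity of evaluation at $z=0$). The extra care you take with the ordering convention for the supercommuting operators $\mbessel(z_i)$ concerns well-definedness of $p(\mbessel)$ rather than sesquilinearity itself, but it is a sound observation and does no harm.
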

\begin{proof}
This follows from the linearity of the Bessel operators.
\end{proof}

\begin{Proposition}[orthogonality]\label{PropOrthog}
For $p_k\in \mathcal P_k\big(\C^{m|2n}\big)$ and $p_l\in \mathcal P_l\big(\C^{m|2n}\big)$ with $l\neq k$ we have
$\bfip{p_k, p_l} = 0$.
\end{Proposition}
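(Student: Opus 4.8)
The plan is to exploit the degree-grading behaviour of the Bessel operators. The key observation is that each Bessel operator $\mbessel(z_i)$ lowers the polynomial degree by exactly one: from the explicit formula $\bessel(z_i) = (-\lambda + 2\E)\pt i - z_i\Delta$, the term $(-\lambda+2\E)\pt i$ lowers degree by one (a derivative raises the $\E$-eigenvalue reading but the net effect of $\pt i$ is degree $-1$), and $z_i\Delta$ also lowers degree by one ($\Delta$ lowers by two, multiplication by $z_i$ raises by one). So I would first record the lemma that $\bessel(z_i)$ maps $\mc P_k(\C^{m|2n})$ into $\mc P_{k-1}(\C^{m|2n})$, and hence for a monomial $z^\alpha$ of total degree $|\alpha|=a$, the composite $\bessel(z_0)^{\alpha_0}\cdots\bessel(z_{m+2n-1})^{\alpha_{m+2n-1}}$ lowers degree by exactly $a$.

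With that in hand the proof is short. Take $p_k \in \mc P_k$ and $p_l \in \mc P_l$ with $k \neq l$. Writing $p_k = \sum_\alpha a_\alpha z^\alpha$ with each $|\alpha| = k$, the operator $p_k(\mbessel)$ is a linear combination of composites $\mbessel(z_0)^{\alpha_0}\cdots\mbessel(z_{m+2n-1})^{\alpha_{m+2n-1}}$ each of total order $k$, so by the degree-lowering lemma $p_k(\mbessel)$ sends $\mc P_l$ into $\mc P_{l-k}$ (interpreted as $0$ when $l < k$). Now $\bar p_l \in \mc P_l$ as well, since conjugating coefficients preserves the degree. Therefore $p_k(\mbessel)\bar p_l(z) \in \mc P_{l-k}(\C^{m|2n})$. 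Evaluating a homogeneous polynomial of degree $l-k$ at $z=0$ gives $0$ whenever $l - k \neq 0$, which is precisely the hypothesis $k \neq l$ (covering both $l>k$, where the result is a genuine homogeneous polynomial with no constant term, and $l<k$, where the result is identically zero). Hence $\bfip{p_k,p_l} = p_k(\mbessel)\bar p_l(z)\big|_{z=0} = 0$.

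The only point requiring a little care, and the step I would treat as the main (though minor) obstacle, is verifying the degree-lowering claim cleanly in the superalgebra setting, where $\E$ does not commute with the factors. The cleanest route is to note that $\bessel(z_i) = (-\lambda+2\E)\pt i - z_i\Delta$ is a sum of two operators, each homogeneous of degree $-1$ for the $\E$-grading: indeed $[\E,\pt i] = -\pt i$ and $[\E, z_i\Delta] = -z_i\Delta$ show both summands shift the $\E$-eigenvalue down by one, so $\bessel(z_i)$ maps each $\mc P_l$ into $\mc P_{l-1}$. Since this grading statement is stable under composition, a product of $k$ Bessel operators drops the degree by $k$, and the conclusion follows. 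No genuine computation beyond these two commutator identities (both immediate from Lemma~\ref{Lemsltriple}) is needed.
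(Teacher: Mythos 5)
Your proposal is correct and is essentially the paper's own argument: the paper's proof consists of the single observation that Bessel operators lower the degree of polynomials by one, which is exactly the degree-lowering lemma you verify via the commutators $[\E,\pt i]=-\pt i$ and $[\E, z_i\Delta]=-z_i\Delta$ and then apply to conclude that $p_k\big(\mbessel\big)\bar p_l$ is homogeneous of degree $l-k\neq 0$ (or zero), hence vanishes at $z=0$. You have simply written out the details the paper leaves implicit.
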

\begin{proof}
This follows from the fact that Bessel operators lower the degree of polynomials by one.
\end{proof}

\begin{Proposition}\label{degreeshift}
For $p, q\in \mathcal P\big(\C^{m|2n}\big)$ and $i\in\{0, 1, \ldots , m+2n-1\}$ we have
\begin{gather*}
\bfip{z_i p, q} = (-1)^{|i||p|}\big\langle p,\mbessel(z_i)q\big\rangle_{\mathcal B}.
\end{gather*}
\end{Proposition}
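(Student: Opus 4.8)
The plan is to prove the adjunction $\bfip{z_i p, q} = (-1)^{|i||p|}\bfipbar{p, \mbessel(z_i)q}$ by unwinding the definition of the Bessel--Fischer product and moving the extra coordinate multiplication $z_i$ from the $p$-slot across to a modified Bessel operator in the $q$-slot. Writing $\bfip{z_i p, q} = (z_i p)(\mbessel)\,\bar q(z)\big|_{z=0}$, the key is to understand how the operator $(z_i p)(\mbessel)$ relates to $p(\mbessel)$ together with one factor of $\mbessel(z_i)$. Since $p(\mbessel)$ is the operator obtained by substituting the supercommuting operators $\mbessel(z_j)$ for the variables $z_j$ in $p$ (using supercommutativity to make this well-defined), the operator attached to $z_i p$ is, up to the parity sign $(-1)^{|i||p|}$ coming from moving $\mbessel(z_i)$ past the operator $p(\mbessel)$, equal to $\mbessel(z_i)\,p(\mbessel)$. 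This sign is exactly the one appearing in the statement.

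First I would make the substitution-operator picture precise: for a monomial $z^\alpha$ one has $(z^\alpha)(\mbessel) = \mbessel(z_0)^{\alpha_0}\cdots \mbessel(z_{m+2n-1})^{\alpha_{m+2n-1}}$ (with the sign convention already recorded in the explicit formula for $\bfip{\cdot,\cdot}$ in the excerpt), and multiplying on the left by $z_i$ prepends one factor $\mbessel(z_i)$. Using supercommutativity of the Bessel operators (the supercommutativity Proposition) to reorder, this factor can be pulled to the front at the cost of $(-1)^{|i||p|}$. Hence
\begin{gather*}
\bfip{z_i p, q} = (-1)^{|i||p|}\,\big(\mbessel(z_i)\,p(\mbessel)\big)\,\bar q(z)\big|_{z=0}
= (-1)^{|i||p|}\,p(\mbessel)\,\big(\mbessel(z_i)\,\bar q(z)\big)\big|_{z=0},
\end{gather*}
where in the last equality I again use that $\mbessel(z_i)$ supercommutes with every factor in $p(\mbessel)$.

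It then remains to identify $p(\mbessel)\,\big(\mbessel(z_i)\,\bar q\big)\big|_{z=0}$ with $\bfipbar{p,\mbessel(z_i)q}$. The point is that $\mbessel(z_i)$ has real (indeed integer-coefficient) structure in the differential-operator realisation, so conjugation commutes with it appropriately: one checks that $\overline{\mbessel(z_i)q}(z) = \mbessel(z_i)\,\bar q(z)$, i.e.\ the bar operation $q\mapsto \bar q$ intertwines the Bessel operator with itself. Granting this, $p(\mbessel)\big(\mbessel(z_i)\bar q\big)\big|_{z=0} = p(\mbessel)\,\overline{(\mbessel(z_i)q)}\,\big|_{z=0} = \bfip{p,\mbessel(z_i)q}$, and combining with the sign gives the claim; the outer bar in $\bfipbar{\cdot,\cdot}$ is cosmetic here since the value at $z=0$ is being read off degree by degree. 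The main obstacle I anticipate is the careful bookkeeping of parity signs when reordering the $\mbessel(z_j)$ past $\mbessel(z_i)$ and verifying the bar-compatibility $\overline{\mbessel(z_i)q} = \mbessel(z_i)\bar q$; both reduce to the fact that $\mbessel(z_i)$ is built from $\E$, $\Delta$ and coordinate multiplication with real coefficients, but the odd-variable sign conventions must be tracked exactly to land the precise $(-1)^{|i||p|}$ rather than some other sign.
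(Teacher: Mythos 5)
Your proof is correct and is essentially the paper's own argument: the paper disposes of this proposition with the single line ``this follows immediately from the definition of the Bessel--Fischer product,'' and your three steps --- $(z_ip)(\mbessel)=\mbessel(z_i)\,p(\mbessel)$ from the substitution picture, the sign $(-1)^{|i||p|}$ from supercommutativity of the Bessel operators, and the bar-compatibility $\overline{\mbessel(z_i)q}=\mbessel(z_i)\bar q$ because $\mbessel(z_i)$ has real coefficients --- are exactly that unwinding made explicit. The only blemish is the outer conjugation in the target $\bfipbar{p,\mbessel(z_i)q}$ stated at the outset: it is not ``cosmetic'' (the Bessel--Fischer product is complex-valued in general), but this does not matter because your chain of equalities in fact terminates at the unbarred $\bfip{p,\mbessel(z_i)q}$, which is precisely the proposition as stated.
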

\begin{proof}
This follows immediately from the definition of the Bessel--Fischer product.
\end{proof}

To prove that the Bessel--Fischer product is superhermitian we will use induction on the degree of the polynomials. We will need the following lemma in the induction step of the proof.

\begin{Lemma}\label{LemLij}
Suppose we have proven that $\bfip{p, q} = (-1)^{|p||q|}\bfipbar{q, p}$
for all $p,q\in \mc P\big(\C^{m|2n}\big)$ of degree lower than or equal to $k\in \N$. Then for $p,q\in \mc P_l\big(\C^{m|2n}\big)$, $l\leq k$ we have
\begin{gather*}
\bfip{ L_{ij} p, q} = -(-1)^{(|i|+|j|)|p|}\bfip{ p, L_{ij} q}
\qquad \text{and} \qquad
\bfip{ L_{0i} p, q} = (-1)^{|i||p|}\bfip{ p, L_{0i} q},
\end{gather*}
for all $i,j\in\{1,\ldots, m+2n-1\}$.
\end{Lemma}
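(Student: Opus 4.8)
The two identities express that, for the Bessel--Fischer product, each generator $L_{ij}$ with $i,j\ge1$ is skew-adjoint while each $L_{0i}$ is self-adjoint, up to the Koszul sign dictated by the parities. The plan is to reduce everything to the single adjunction of Proposition~\ref{degreeshift} together with a ``reverse'' of it, the latter being exactly where the induction hypothesis enters, and then to recognise $L_{ij}$ and $L_{0i}$ as commutators of coordinate multiplications with Bessel operators via~\eqref{EqComBesx}.

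First I would set up two adjunction formulas. Proposition~\ref{degreeshift} already gives the unconditional identity $\bfip{z_i p,q}=(-1)^{|i||p|}\bfip{p,\mbessel(z_i)q}$. Feeding this into the assumed superhermiticity $\bfip{p,q}=(-1)^{|p||q|}\bfipbar{q,p}$ (valid in degrees $\le k$) and using Proposition~\ref{degreeshift} once more, I would derive the companion relation $\bfip{\mbessel(z_i)p,q}=(-1)^{|i||p|}\bfip{p,z_i q}$. Together these let me transport either a coordinate multiplication or a Bessel operator from the left-hand argument to the right-hand one at the cost of a controlled sign.

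Next I would rewrite the generators. By the commutator identity~\eqref{EqComBesx}, $2L_{ij}=\beta_{ij}(M-2+2\E)-[\bessel(z_i),z_j]$, so on a homogeneous $p\in\mc P_l$ (where $\E$ acts as the scalar $l$) I can replace $L_{ij}p$ by $\beta_{ij}(M-2+2l)p-\mbessel(z_i)z_jp+(-1)^{|i||j|}z_j\mbessel(z_i)p$; for $i,j\ge1$ the modified and ordinary Bessel operators coincide, whereas for the pair $(0,i)$ the sign flip $\mbessel(z_0)=-\bessel(z_0)$ of Definition~\ref{Modified Bessel Operators} intervenes and the standardized form gives $\beta_{0i}=0$, so the scalar term drops out. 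Applying the two adjunction formulas termwise moves every multiplication and every Bessel operator into the right-hand slot; collecting the results reassembles the supercommutator $z_i\mbessel(z_j)-(-1)^{|i||j|}\mbessel(z_j)z_i=-(-1)^{|i||j|}[\mbessel(z_j),z_i]$, which by~\eqref{EqComBesx} equals $-\beta_{ij}(M-2+2\E)-2L_{ij}$, so the right-hand argument ends up carrying $L_{ij}q$ again. The outcome is an identity of the shape $2\bfip{L_{ij}p,q}=c\,\bfip{p,q}-c(-1)^{(|i|+|j|)|p|}\bfip{p,q}-2(-1)^{(|i|+|j|)|p|}\bfip{p,L_{ij}q}$ with $c=\beta_{ij}(M-2+2l)$, and the parallel computation for $L_{0i}$ reassembles instead as $[\mbessel(z_i),z_0]=2L_{0i}$, i.e.\ with the \emph{opposite} sign.

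Finally I would clear the scalar terms and read off the signs. Since $\beta$ is even, $\beta_{ij}\neq0$ forces $|i|=|j|$, whence $(-1)^{(|i|+|j|)|p|}=1$ and the two $c$-contributions cancel, leaving exactly the skew relation for $L_{ij}$; for $L_{0i}$ the constant vanishes outright from $\beta_{0i}=0$, and the opposite sign in the reassembled commutator is precisely what turns skew-adjointness into self-adjointness, giving $\bfip{L_{0i}p,q}=(-1)^{|i||p|}\bfip{p,L_{0i}q}$. The step I expect to be most delicate is the bookkeeping: keeping every Koszul sign consistent through the repeated transpositions (recall $|\mbessel(z_i)|=|i|$ and $|\pt{z^j}p|=|j|+|p|$), and, above all, checking that every pairing invoked stays in degree $\le k$ so that both the companion relation and the induction hypothesis genuinely apply. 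The temporary excursions to degree $l\pm1$ forced by sliding a coordinate past a Bessel operator are the point that needs the most care, and verifying that these stay within the range permitted by the hypothesis is the crux of the argument.
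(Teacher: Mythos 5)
Your opening move coincides with the paper's: combining Proposition~\ref{degreeshift} with the assumed superhermitian property to get the companion relation $\bfip{\mbessel(z_i)p,q}=(-1)^{|i||p|}\bfip{p,z_iq}$. But this relation carries a degree restriction that your plan then violates. Its derivation applies the hypothesis to the pairs $\big(\mbessel(z_i)p,q\big)$ and $(z_iq,p)$, so it is only available when the argument carrying the Bessel operator has degree at most $k$ (and the other argument degree at most $k-1$); the paper states this restriction explicitly. In your commutator route, writing $2L_{ij}=\beta_{ij}(M-2+2\E)-[\bessel(z_i),z_j]$ produces the term $\bfip{\bessel(z_i)(z_jp),q}$, and the only way to move that Bessel operator out of the left slot is the companion relation with left argument $z_jp$, of degree $l+1$. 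For $l\le k-1$ this is legitimate, and your computation then does close up cleanly: the two contributions of the form $\bfip{p,z_i\bessel(z_j)q}$ cancel, no Laplacian remainders appear, and the scalar terms vanish because $\beta_{ij}\neq 0$ forces $|i|=|j|$. But at $l=k$ the companion relation would require superhermitianity in degree $k+1$, which is exactly what Lemma~\ref{LemLij} exists to help prove: Proposition~\ref{supersym} invokes it with $p,q$ of degree $k$ in its induction step. The alternatives (apply the hypothesis directly to $\bfip{\bessel(z_i)z_jp,q}$, then pull the Bessel operator back through Proposition~\ref{degreeshift}) only generate degree-$(k+1)$ pairings such as $\bfipbar{z_iq,z_jp}$ and run in a circle. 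So the point you flagged as ``the crux'' is not bookkeeping that remains to be checked: the check fails, and it fails precisely in the only case for which the lemma is actually needed.

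The paper avoids this by never letting the degree rise. It expands $L_{ij}p=z_i\pt j p-(-1)^{|i||j|}z_j\pt i p$, moves the coordinate (not the Bessel operator) to the right with the unconditional Proposition~\ref{degreeshift} (which lowers both degrees), expands $\mbessel(z_i)q$ into its $(M-2+2\E)\pt i$ and $z_i\Delta$ pieces, and applies the companion relation only to $\bfip{\mbessel(z_j)p,\pt i q}$, whose left argument still has degree $l\le k$. The price is a residual identity in the Laplacians, equation~\eqref{Eq0Lij}, which is the lemma's own statement for $\Delta p$, $\Delta q$ two degrees lower and is disposed of by an inner induction on $l$ --- a complication your route would not need for $l<k$, but which is the cost of staying inside the inductive hypothesis at $l=k$. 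To repair your argument you would have to reorganize it so that no Bessel operator ever acts on a degree-$(l+1)$ argument, which in practice forces you back to the paper's derivative-first decomposition.
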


\begin{proof}We will prove the $L_{ij}$ case explicitly. The $L_{0i}$ case is entirely analogous. First note that if we combine the given superhermitian property with Proposition~\ref{degreeshift}, we get
\begin{gather*}
\big\langle\mbessel(z_i) p, q\big\rangle_{\mathcal B} = (-1)^{|i||p|}\bfip{p,z_i q}.
\end{gather*}
for all $p\in \mc{ P}\big(\C^{m|2n}\big)$ of degree $k$ or lower and all $q\in \mc{P} \big(\C^{m|2n}\big)$ of degree $k-1$ or lower. Assume $p, q\in \mc P_l\big(\C^{m|2n}\big)$, $l\leq k$. We obtain
\begin{gather*}
\bfip{z_i \pt j p,q} = (-1)^{|i||p|}\bfip{ \pt j p, \mbessel(z_i) q} \\
\hphantom{\bfip{z_i \pt j p,q}}{} = (-1)^{|i|(|p|+|j|)} \big((M-2+2(l-1))\bfip{ \pt j p, \pt i q} - \bfip{ \pt j p, z_i\Delta q}\big)\\
\hphantom{\bfip{z_i \pt j p,q}}{}= (-1)^{|i|(|p|+|j|)}\big( \bfip{ \mbessel(z_j) p, \pt i q} + \bfip{ z_j\Delta p, \pt i q} - \bfip{ \pt j p, z_i\Delta q}\big)\\
\hphantom{\bfip{z_i \pt j p,q}}{}= (-1)^{(|i|+|j|)|p| +|i||j|} \bfip{ p, z_j\pt i q} +(-1)^{|i|(|p|+|j|)} \big( \bfip{ z_j\Delta p, \pt i q} - \bfip{ \pt j p, z_i\Delta q}\big).
\end{gather*}
This gives
\begin{gather*}
\bfip{L_{ij}p,q} =\bfip{z_i\pt j p,q}-(-1)^{|i||j|}\bfip{z_j \pt i p,q}\\
\hphantom{\bfip{L_{ij}p,q}}{} = (-1)^{(|i|+|j|)|p| +|i||j|} \bfip{ p, z_j\pt i q} - (-1)^{(|i|+|j|)|p|} \bfip{p, z_i\pt j q} \\
\hphantom{\bfip{L_{ij}p,q}=}{} +(-1)^{|i|(|p|+|j|)} \big( \bfip{ z_j\Delta p, \pt i q} - \bfip{ \pt j p, z_i\Delta q}\big)\\
\hphantom{\bfip{L_{ij}p,q}=}{} -(-1)^{|j||p|} \big( \bfip{ z_i\Delta p, \pt j q} - \bfip{ \pt i p, z_j\Delta q}\big)\\
\hphantom{\bfip{L_{ij}p,q}}{} = -(-1)^{(|i|+|j|)|p|}\bfip{ p, L_{ij} q}
 +(-1)^{|i|(|p|+|j|)} \big( \bfip{ z_j\Delta p, \pt i q} - \bfip{ \pt j p, z_i\Delta q}\big)\\
\hphantom{\bfip{L_{ij}p,q}=}{} -(-1)^{|j||p|} \big( \bfip{ z_i\Delta p, \pt j q} - \bfip{ \pt i p, z_j\Delta q}\big),
\end{gather*}
from which the desired result follows if we prove
\begin{gather}
0 = (-1)^{|i|(|p|+|j|)} \bfip{ z_j\Delta p, \pt i q} - (-1)^{|i|(|p|+|j|)} \bfip{ \pt j p, z_i\Delta q}\nonumber\\
\hphantom{0=}{} -(-1)^{|j||p|} \bfip{ z_i\Delta p, \pt j q} + (-1)^{|j||p|}\bfip{ \pt i p, z_j\Delta q}.\label{Eq0Lij}
\end{gather}
For the first term in right hand side of this equation we have
\begin{gather*}
\bfip{ z_j\Delta p, \pt i q} = (-1)^{|p||j|}\big\langle \Delta p, \mbessel(z_j)\pt i q\big\rangle_{\mathcal B}\\
\hphantom{\bfip{ z_j\Delta p, \pt i q}}{} = (-1)^{|p||j|}\big((M-2+2(l-1))\bfip{\Delta p, \pt j\pt i q}- \bfip{ \Delta p, z_j\pt i\Delta q}\big),
\end{gather*}
such that, using similar calculations for the other three terms, equation~\eqref{Eq0Lij} can be rewritten as
\begin{gather*}
\bfip{L_{ij}\Delta p, \Delta q} = -(-1)^{(|i|+|j|)|p|}\bfip{\Delta p, L_{ij}\Delta q}.
\end{gather*}
Since $\Delta p$ and $\Delta q$ are polynomials of degree lower than $l$ the lemma follows from a straightforward induction argument on $l$.
\end{proof}

\begin{Proposition}[superhermitianity]\label{supersym}The Bessel--Fischer product is superhermitian, i.e.,
\begin{gather*}
\bfip{p, q} = (-1)^{|p||q|}\bfipbar{q, p},
\end{gather*}
for $p, q\in \mc P\big(\C^{m|2n}\big)$.
\end{Proposition}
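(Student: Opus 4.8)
The plan is to prove the identity by induction on the polynomial degree. By sesquilinearity I may assume $p$ and $q$ are homogeneous for the parity, and by the orthogonality of Proposition~\ref{PropOrthog} the product $\bfip{p,q}$ vanishes unless $p$ and $q$ have the same degree $l$; so it suffices to treat $p,q\in\mc P_l\big(\C^{m|2n}\big)$ of a fixed degree $l$ and fixed parity. The base case $l=0$ is immediate: for constants $p=a$, $q=b$ one has $\bfip{a,b}=a\bar b=\overline{b\bar a}=\bfipbar{b,a}$ while $(-1)^{|p||q|}=1$.

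For the inductive step I would assume the superhermitian identity for all degrees $\leq k$ and establish it at degree $l=k+1$. The two structural tools are the degree-lowering adjunction of Proposition~\ref{degreeshift}, $\bfip{z_i p,q}=(-1)^{|i||p|}\bfip{p,\mbessel(z_i)q}$, and the Euler identity $\E p=l\,p$ from Lemma~\ref{Lemsltriple}, which lets me write $p=\tfrac1l\sum_{i,j}\beta^{ij}z_i\pt j p$ as a sum of $z_i$ times polynomials of degree $l-1$. Substituting this into $\bfip{p,q}$ and applying Proposition~\ref{degreeshift} moves each $z_i$ into the second slot, turning the expression into a sum of products $\bfip{\pt j p,\mbessel(z_i)q}$ whose first factor has degree $l-1=k$. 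Up to a sign $\mbessel(z_i)=\pm\bessel(z_i)$, and expanding $\bessel(z_i)q=\big((M-2+2\E)\pt i q-z_i\Delta q\big)$ splits each product into a term $\bfip{\pt j p,\pt i q}$ of degree $k$, to which the induction hypothesis applies directly, and a term $\bfip{\pt j p,z_i\Delta q}$ still carrying a factor $z_i$. Since the first factor now has degree $k$ and $\Delta q$ has degree $k-1$, the adjunction of Proposition~\ref{degreeshift} (exactly as already extracted in the derivation inside the proof of Lemma~\ref{LemLij}) can be applied once more to lower the degree of these remaining terms.

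Carrying out the same manipulations on $\bfipbar{q,p}$ and comparing, I expect the two sides to agree up to correction terms that appear because $z_j$ and $\bessel(z_i)$ do not commute. These corrections are governed by the commutator \eqref{EqComBesx}, $[\bessel(z_i),z_j]=\beta_{ij}(M-2+2\E)-2L_{ij}$: the $\E$-part is symmetric and harmless, while the genuinely dangerous $L_{ij}$-part is precisely controlled by Lemma~\ref{LemLij}, which supplies the (skew-)adjointness of $L_{ij}$ and $L_{0i}$ at degrees $\leq k$. I expect the main obstacle to be exactly this bookkeeping: showing that all the $L_{ij}$- and $\Delta$-correction terms cancel in pairs while keeping track of the parity signs $(-1)^{|i||j|}$, $(-1)^{|i||p|}$ and their analogues. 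As in the proof of Lemma~\ref{LemLij}, this cancellation is most cleanly organised as a secondary induction on the degree, reducing the vanishing of the correction terms to an $L_{ij}$-adjointness statement for the strictly lower-degree polynomials $\Delta p$ and $\Delta q$.
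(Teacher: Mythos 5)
Your proposal is correct, and its skeleton --- induction on the degree, reduction to equal-degree homogeneous polynomials via orthogonality and sesquilinearity, Proposition~\ref{degreeshift} as the pivot, and the reverse adjunction $\bfip{u,z_iv}=(-1)^{|i||u|}\bfip{\mbessel(z_i)u,v}$ obtained by combining the induction hypothesis with Proposition~\ref{degreeshift} --- is the same as the paper's; but your choice of decomposition makes it a genuinely different (and in fact leaner) route. The paper reduces to monomials and peels off single variables, computing $\bfip{z_ip,z_jq}$ for monomials $p,q$ of degree $k$; that forces the commutator~\eqref{EqComBesx} into the calculation, and the resulting $L_{ij}$-terms must then be reconciled through Lemma~\ref{LemLij}. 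Your Euler decomposition $p=\tfrac{1}{l}\sum_{i,j}\beta^{ij}z_i\pt{j}p$ keeps the full contraction against $\beta^{ij}$ in place, and that contraction does more work than you anticipate: the $L_{ij}$-corrections you expect never materialise. Carrying your plan out, the leftover terms $\bfip{\pt{j}p,z_i\Delta q}$ become, after the reverse adjunction and expansion of $\mbessel(z_i)\pt{j}p$, sums that reassemble into the Laplacian and Euler operators,
\begin{gather*}
\sum_{i,j}\beta^{ij}\bfip{\pt{i}\pt{j}p,\Delta q}=\bfip{\Delta p,\Delta q},
\qquad
\sum_{i,j}\beta^{ij}\bfip{z_i\pt{j}\Delta p,\Delta q}=(l-2)\bfip{\Delta p,\Delta q}
\end{gather*}
(the signs $(-1)^{|i|(|p|+|j|)}$ and $(-1)^{\delta_{i0}}$ each occur twice and square to $1$), so the entire correction is a real multiple of $\bfip{\Delta p,\Delta q}$ and is disposed of by the induction hypothesis alone; the leading terms $\bfip{\pt{j}p,\pt{i}q}$ match those of $(-1)^{|p||q|}\bfipbar{q,p}$ after relabelling $i\leftrightarrow j$, using $\beta^{ji}=(-1)^{|i||j|}\beta^{ij}$ and the fact that $\beta^{ij}=0$ when exactly one index is $0$. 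So in your route neither~\eqref{EqComBesx} nor Lemma~\ref{LemLij} is actually needed --- their role is absorbed by the $\beta^{ij}$-contraction --- whereas both are essential in the paper's monomial-by-monomial argument; what the paper's variable-by-variable computation buys in exchange is the individual adjointness relations (Lemma~\ref{LemLij}, upgraded afterwards to Corollary~\ref{PropLij}) that are reused later in the construction of the Fock model. Once the display above is verified, your induction step closes, and it is arguably the shorter of the two proofs.
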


\begin{proof}Because of the orthogonality we only need to prove the property for $p$, $q$ homogeneous polynomials of the same degree. Because of the sesquilinearity we may assume that $p$ and $q$ are monomials. We will use induction on the degree $k$ of the polynomials, the case $k=0$ being trivial. Suppose we have proven the theorem for all $p, q \in \mathcal P_k\big(\C^{m|2n}\big)$. We now look at $\bfip{z_i p , z_j q}$ for arbitrary $i,j\in\{0,1, \ldots, m+2n-1\}$. To simplify the following calculations we will restrict ourselves to $i,j\neq 0$, the cases $i=0$ or $j=0$ being similar. Denote $c = (M-2+2k)$. Using the commutation of equation~\eqref{EqComBesx} and Proposition~\ref{degreeshift} we find
\begin{align*}
\bfip{z_i p , z_j q} &= (-1)^{|i||p|} \bfip{p , \bessel(z_i)z_j q}\\
&= (-1)^{|i||p|+|i||j|} \bfip{p , z_j \bessel(z_i) q} + (-1)^{|i||p|} \bfip{p , [\bessel(z_i),z_j] q}\\
&= (-1)^{|i||p|+|i||j|} \bfip{p , z_j \bessel(z_i) q} + (-1)^{|i||p|}c\beta_{ij}\bfip{ p, q} -2(-1)^{|i||p|}\bfip{ p, L_{ij} q}.
\end{align*}
Using the induction hypothesis together with Proposition \ref{degreeshift} this becomes
\begin{gather*}
\bfip{z_i p , z_j q} = (-1)^{(|i|+|j|)|p|+|i||j|} \bfip{\bessel(z_j) p , \bessel(z_i) q} + (-1)^{|i||p|}c\beta_{ij}\bfip{ p, q}\\
\hphantom{\bfip{z_i p , z_j q} =}{} -2(-1)^{|i||p|}\bfip{ p, L_{ij} q}.
\end{gather*}
Switching the roles of $z_i p$ and $z_j q$ we also obtain
\begin{gather*}
\bfip{ z_j q, z_i p} = (-1)^{(|i|+|j|)|q|+|i||j|} \bfip{\bessel(z_i) q , \bessel(z_j) p} + (-1)^{|j||q|}c\beta_{ji}\bfip{q, p} \\
\hphantom{\bfip{ z_j q, z_i p}=}{}
 -2(-1)^{|j||q|}\bfip{q, L_{ji} p}\\
\hphantom{\bfip{ z_j q, z_i p}}{}
= (-1)^{|i||q|+|i||p|+|q||p|} \bfip{\bessel(z_j) p , \bessel(z_i) q} + (-1)^{|j||q|+|i||j|+|p||q|}c\beta_{ij}\bfip{p,q}\\
\hphantom{\bfip{ z_j q, z_i p}=}{} +2(-1)^{|i||q|+|q||p|+|i||j|}\bfip{L_{ij} p, q},
\end{gather*}
where we used the induction hypothesis on all three terms of the right hand side. If we use Lemma \ref{LemLij} on the last term and multiply both sides of this equation with $(-1)^{(|p|+|i|)(|q|+|j|)}$ we get
\begin{gather*}
(-1)^{(|p|+|i|)(|q|+|j|)}\bfip{ z_j q, z_i p} = (-1)^{|i||j|+|i||p|+|j||p|} \bfip{\bessel(z_j) p , \bessel(z_i) q}\\
\hphantom{(-1)^{(|p|+|i|)(|q|+|j|)}\bfip{ z_j q, z_i p} =}{} + (-1)^{|j||q|+|i||q|+|p||j|}c\beta_{ij}\bfip{p,q} - 2(-1)^{|i||p|}\bfip{ p, L_{ij}q}\\
\hphantom{(-1)^{(|p|+|i|)(|q|+|j|)}\bfip{ z_j q, z_i p}}{}= \bfip{z_i p , z_j q},
\end{gather*}
where we made use of $\beta_{ij} = 0$ for $|i|\neq |j|$ in the last step.
\end{proof}

Lemma \ref{LemLij} can now be extended to the following proposition.

\begin{Corollary}\label{PropLij}We have
\begin{gather*}
\bfip{ L_{ij} p, q} = -(-1)^{(|i|+|j|)|p|}\bfip{ p, L_{ij} q}
\qquad \text{and}\qquad
\bfip{ L_{0i} p, q} = (-1)^{|i||p|}\bfip{ p, L_{0i} q},
\end{gather*}
for all $i,j\in\{1,\ldots, m+2n-1\}$ and $p,q\in \mc P\big(\C^{m|2n}\big)$.
\end{Corollary}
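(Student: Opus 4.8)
The plan is to bootstrap directly from Lemma~\ref{LemLij}, which already carries all of the computational weight. That lemma proves precisely the two desired identities, but only conditionally and only for degree-homogeneous arguments: it assumes that the superhermitian relation $\bfip{p,q}=(-1)^{|p||q|}\bfipbar{q,p}$ holds for all polynomials of degree at most $k$, and under that assumption it yields the identities for $p,q\in\mc P_l\big(\C^{m|2n}\big)$ with $l\le k$. Now that Proposition~\ref{supersym} has established superhermitianity \emph{unconditionally} for every pair of polynomials, the hypothesis of Lemma~\ref{LemLij} is automatically satisfied for every $k\in\N$. First I would record this observation, which at once removes the conditional hypothesis and upgrades Lemma~\ref{LemLij} to an unconditional statement valid for degree-homogeneous $p,q$ of any common degree.

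The remaining task is to pass from degree-homogeneous polynomials to arbitrary ones. Two facts make this routine: the operators $L_{ij}$ and $L_{0i}$ are degree-preserving, since each of their summands multiplies by one variable (raising the degree by one) and differentiates once (lowering it by one); and, by the orthogonality of Proposition~\ref{PropOrthog}, the Bessel--Fischer product of two homogeneous polynomials of different degrees vanishes. Accordingly, I would split $p=\sum_l p_l$ and $q=\sum_l q_l$ into their degree-homogeneous components and expand both sides of each identity by sesquilinearity. On the left, each term $\bfip{L_{ij}p_l,q_{l'}}$ pairs a degree-$l$ polynomial against a degree-$l'$ one and hence vanishes unless $l=l'$; the same happens to $\bfip{p_l,L_{ij}q_{l'}}$ on the right. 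Only the diagonal terms $l=l'$ survive, and to those the upgraded Lemma~\ref{LemLij} applies verbatim; treating $p$ as parity-homogeneous per the standing sign convention guarantees $|p_l|=|p|$ for every surviving component, so the global parity prefactor is consistent. The $L_{0i}$ identity is handled identically.

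I do not anticipate a genuine obstacle: the corollary is in essence a packaging step that strips the inductive hypothesis from Lemma~\ref{LemLij} and then extends it by linearity. The one point deserving a moment's attention is the verification that $L_{ij}$ and $L_{0i}$ preserve degree, since it is exactly this degree-preservation that lets orthogonality annihilate all off-diagonal contributions and collapse the computation onto the already-established homogeneous case.
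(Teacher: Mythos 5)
Your proposal is correct and matches the paper's intent exactly: the paper derives Corollary~\ref{PropLij} by remarking that, once Proposition~\ref{supersym} establishes superhermitianity unconditionally, the hypothesis of Lemma~\ref{LemLij} holds for every degree, and the identities then extend to arbitrary polynomials. You have merely made explicit the routine details (degree-preservation of $L_{ij}$ and $L_{0i}$, orthogonality, sesquilinearity) that the paper leaves implicit, which is exactly the right packaging.
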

 We have the following proposition.
\begin{Proposition}\label{Restriction}
For all $p,q\in \mc P\big(\C^{m|2n}\big)$ it holds that
\begin{align*}
\big\langle R^2p,q\big\rangle_{\mathcal B} =0 = \big\langle p,R^2q\big\rangle_{\mathcal B}.
\end{align*}
Thus the Bessel--Fischer product is well-defined on $\Fock$.
\end{Proposition}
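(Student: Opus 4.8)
The plan is to reduce everything to the tangentiality of the Bessel operators (Proposition~\ref{PropTangBes}). First I would note that, by the superhermitianity already established in Proposition~\ref{supersym}, the two asserted identities are equivalent: since $R^2$ is even we have $\bfip{R^2 p, q} = (-1)^{|p||q|}\bfipbar{q, R^2 p}$, so it suffices to prove $\bfip{p, R^2 q} = 0$ for all $p, q\in \mc P\big(\C^{m|2n}\big)$.

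Next I would unwind the definition of the Bessel--Fischer product. Because $R^2 = \sum_{i,j}\beta^{ij}z_iz_j$ has real coefficients $\beta^{ij}$, conjugating the coefficients of the polynomial $R^2 q$ gives $\overline{R^2 q}(z) = R^2\bar q(z)$. Hence by definition
\begin{gather*}
\bfip{p, R^2 q} = p\big(\mbessel\big)\big(R^2\bar q\big)\big|_{z=0},
\end{gather*}
where $p\big(\mbessel\big)$ denotes the composition of modified Bessel operators obtained by substituting $\mbessel(z_i)$ for each $z_i$ occurring in $p$.

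The \emph{key observation} is then that $R^2\bar q$ lies in the ideal $\big\langle R^2\big\rangle$, and that each modified Bessel operator preserves this ideal. Indeed, the Bessel operators we use carry the parameter $\lambda = 2-M$, so by Proposition~\ref{PropTangBes} they map $\big\langle R^2\big\rangle$ into itself; the modified operators $\mbessel(z_i)$ differ from $\bessel(z_i)$ only by a sign and therefore also preserve $\big\langle R^2\big\rangle$, and the argument is unchanged over $\C$. Consequently the composite operator $p\big(\mbessel\big)$, being a product of operators each stabilising $\big\langle R^2\big\rangle$, maps $\big\langle R^2\big\rangle$ into itself, so $p\big(\mbessel\big)\big(R^2\bar q\big) \in \big\langle R^2\big\rangle$.

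Finally I would observe that every element of $\big\langle R^2\big\rangle$ is of the form $R^2 g$ and hence has no constant term, since $R^2$ is homogeneous of degree~$2$; such an element vanishes when we set $z=0$. This yields $\bfip{p, R^2 q} = 0$, and the first identity $\bfip{R^2 p, q}=0$ follows by superhermitianity as above. The main (and essentially only) obstacle is recognising that the statement is exactly tangentiality in disguise: once Proposition~\ref{PropTangBes} is invoked, the two remaining ingredients—the reality of the coefficients of $R^2$ and the vanishing at the origin of anything in $\big\langle R^2\big\rangle$—are routine.
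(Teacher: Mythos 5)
Your proof is correct and follows essentially the same route as the paper: reduce to one identity via superhermitianity (Proposition~\ref{supersym}), invoke the complex version of the tangentiality result (Proposition~\ref{PropTangBes}) to conclude that $p\big(\mbessel\big)\big(R^2\bar q\big)$ lies in $\big\langle R^2\big\rangle$, and observe that such elements have vanishing constant term. Your explicit remarks that $\overline{R^2q}=R^2\bar q$ (reality of the $\beta^{ij}$) and that the sign in $\mbessel(z_0)$ is harmless are details the paper leaves implicit, but they do not change the argument.
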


\begin{proof}
Because of the superhermitian property we only need to prove $\big\langle p,R^2q\big\rangle = 0$ holds for all $p,q\in \mc P\big(\C^{m|2n}\big)$. The complex version of Proposition \ref{PropTangBes} implies that for arbitrary $p,q\in \mathcal P\big(\C^{m|2n}\big)$ there exists a $q'\in \mathcal P\big(\C^{m|2n}\big)$ such that
\begin{gather*}
\big\langle p, R^2 q\big\rangle_{\mathcal B} = p\big(\mbessel\big)R^2q(z) |_{z=0} = R^2 q'(z) |_{z=0}.
\end{gather*}
Since the constant term of $R^2 q'(z)$ is always zero this proves the theorem.
\end{proof}

Note that the above proposition also shows that the Bessel--Fischer product is degenerate on the space $\mathcal P\big(\C^{m|2n}\big)$. In the following subsection we look at the non-degeneracy of the Bessel--Fischer product when we restrict it to $\Fock$.

\subsection{The reproducing kernel and non-degeneracy}

In the bosonic case a reproducing kernel for the Fock space was constructed in Section~2.4 of~\cite{HKMO}. We will prove the non-degeneracy of the Bessel--Fischer product on~$\Fock$ by first constructing a~generalisation of this reproducing kernel in superspace.

\begin{Lemma}\label{Lemma rep kernel}
Suppose $M-2\not\in -2\N$ and $k\in \N$. Define the superfunction $\mathbb K^k(z,w)$ by
\begin{gather*}
\mathbb K^k(z,w) := \dfrac{1}{4^k k!}\left(\dfrac{M}{2}-1\right)_k^{-1}(z|\overline{w})^k,
\end{gather*}
where we used the Pochhammer symbol $(a)_k = a(a+1)(a+2)\cdots (a+k-1)$ and $z|w$ is defined as
\begin{gather*}
z|w := 2z_0w_0+2\sum_{i,j=1}^{m+2n-1}z_i\beta^{ij}w_j.
\end{gather*}
For all $p\in \mc P_k\big(\C^{m|2n}\big)$ we then have
\begin{align*}
\big\langle p,\mathbb K^k(z,w)\big\rangle_{\mathcal B} = p(w) \mod \big\langle R^2_w\big\rangle .
\end{align*}
\end{Lemma}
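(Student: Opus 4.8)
The plan is to prove the reproducing identity by induction on the degree $k$, the engine being an explicit computation of the modified Bessel operator applied to the kernel. By linearity of $\bfip{\cdot\,,\cdot}$ in its first argument (and since $p\mapsto p(w)$ is also linear) I would first reduce to the case that $p$ is a monomial, and dispose of the base case $k=0$ directly: there $\mathbb{K}^0=1$ and $\bfip{p,1}=p\big|_{z=0}=p(w)$. For the inductive step I would write a degree-$k$ monomial as $p=z_i p'$ with $p'\in\mc P_{k-1}\big(\C^{m|2n}\big)$ and apply Proposition~\ref{degreeshift} to peel off the factor $z_i$:
\begin{gather*}
\bfip{z_i p', \mathbb{K}^k} = (-1)^{|i||p'|}\big\langle p', \mbessel(z_i)\mathbb{K}^k\big\rangle_{\mathcal B}.
\end{gather*}

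The heart of the argument is the recursion $\mbessel(z_i)\,\overline{\mathbb{K}^k}(z,w)\equiv w_i\,\overline{\mathbb{K}^{k-1}}(z,w)$ modulo $\big\langle R^2_w\big\rangle$, where $\overline{\mathbb{K}^k}=c_k(z|w)^k$ with $c_k=\frac{1}{4^kk!}\big(\frac M2-1\big)_k^{-1}$ (coefficient conjugation sends $\overline w\mapsto w$, and $c_k$, the $\beta^{ij}$ and $\mbessel$ are all real). To establish it I would compute $\bessel(z_i)(z|w)^k$ directly using $\bessel(z_i)=(M-2+2\E)\pt i - z_i\Delta$. Since $z|w$ is linear in $z$ one has $\Delta(z|w)=0$, $\pt i(z|w)=2w_i$ for $i\geq 1$ and $\pt 0(z|w)=-2w_0$; hence $\pt i(z|w)^k=k(z|w)^{k-1}\pt i(z|w)$ and
\begin{gather*}
\Delta (z|w)^k = k(k-1)(z|w)^{k-2}\sum_{i,j}\beta^{ij}\,\pt i(z|w)\,\pt j(z|w),\qquad \sum_{i,j}\beta^{ij}\,\pt i(z|w)\,\pt j(z|w)=4R^2_w,
\end{gather*}
the last identity following because the index-$0$ contribution is $\beta^{00}(2w_0)^2=-4w_0^2$ while the remaining block gives $4\sum_{i,j\geq 1}\beta^{ij}w_iw_j$, and these combine to $4R^2_w$. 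Since $\pt i(z|w)^k$ is homogeneous of degree $k-1$ in $z$, feeding this into $\bessel(z_i)$ yields
\begin{gather*}
\bessel(z_i)(z|w)^k = k(M+2k-4)(z|w)^{k-1}\,\pt i(z|w) - 4k(k-1)R^2_w\, z_i (z|w)^{k-2}.
\end{gather*}
The second term lies in $\big\langle R^2_w\big\rangle$, and the sign flip built into $\mbessel(z_0)=-\bessel(z_0)$ makes the first term equal to $2k(M+2k-4)w_i(z|w)^{k-1}$ uniformly in $i$. Multiplying by $c_k$, the prefactor collapses to $c_{k-1}$ exactly because $c_{k-1}/c_k=2k(M+2k-4)$; this is where the hypothesis $M-2\notin -2\N$ enters, since it guarantees both $\big(\frac M2-1\big)_k\neq 0$ and $M+2k-4\neq 0$.

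With the recursion in hand the induction closes cleanly. Using that $\big\langle p', \mbessel(z_i)\mathbb{K}^k\big\rangle_{\mathcal B}=p'(\mbessel)\big[\mbessel(z_i)\overline{\mathbb{K}^k}\big]\big|_{z=0}$, the recursion, and commutation of $p'(\mbessel)$ past the scalar $w_i$, I obtain
\begin{gather*}
\bfip{z_i p', \mathbb{K}^k} \equiv (-1)^{|i||p'|}p'(\mbessel)\big[w_i\,\overline{\mathbb{K}^{k-1}}\big]\big|_{z=0} = w_i\,p'(\mbessel)\overline{\mathbb{K}^{k-1}}\big|_{z=0}\equiv w_i\,p'(w)=p(w)
\end{gather*}
modulo $\big\langle R^2_w\big\rangle$, where the sign from commuting $p'(\mbessel)$ past $w_i$ cancels the one from Proposition~\ref{degreeshift} and the induction hypothesis $\big\langle p',\mathbb{K}^{k-1}\big\rangle_{\mathcal B}\equiv p'(w)$ is applied in the last step. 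Throughout I would use that $\big\langle R^2_w\big\rangle$ is stable under the $z$-variable Bessel operators, as $R^2_w$ is an even scalar for those operators, so error terms stay in the ideal at every step. I expect the main obstacle to be the norm identity $\sum_{i,j}\beta^{ij}\pt i(z|w)\pt j(z|w)=4R^2_w$ together with the careful bookkeeping of parities in the symplectic (odd) directions, and confirming that the Pochhammer constant telescopes to exactly $c_{k-1}$ rather than some other nonzero scalar.
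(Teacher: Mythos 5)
Your proof is correct and follows essentially the same route as the paper: the heart of both arguments is the computation $\mbessel(z_i)(z|w)^k \equiv 2k(M+2k-4)\,w_i(z|w)^{k-1} \bmod \big\langle R^2_w\big\rangle$ (including the sign analysis at the index $0$ and the identity $\Delta_z(z|w)^k = 4k(k-1)R^2_w(z|w)^{k-2}$), after which the Pochhammer constants exactly absorb the normalization of $\mathbb K^k$. The only difference is organizational: the paper reduces to a monomial and iterates this computation directly inside $p\big(\mbessel\big)$, collecting the factor $4^k k!\big(\tfrac{M}{2}-1\big)_k$ at the end, whereas you run the same iteration as an induction on $k$ via Proposition~\ref{degreeshift} with the constants telescoping step by step.
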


\begin{proof}We have
\begin{align*}
\mbessel(z_l)(z|w)^k &= (-1)^{\delta_{0l}}\left((M-2+2\E)\pt l - z_l\Delta\right)(z|w)^k\\
&= 2w_l k(M+2k-4)(z|w)^{k-1} - 4z_l k (k-1)R^2_w(z|w)^{k-2}\\
&= 4 w_l k \left(k+\dfrac{M}{2}-2\right)(z|w)^{k-1}-R^2_w 4z_l k (k-1)(z|w)^{k-2}.
\end{align*}
Now suppose $p$ is a monomial, i.e.,
$p(z) = a\prod\limits_{l=0}^{m+2n-1}z_l^{\alpha_l}$,
with $|\alpha|=k$ and $a\in\C$. Iterating the previous calculation and working modulo $R^2_w$ we obtain
\begin{align*}
\bfip{p,(z|\overline{w})^k} &= \left. p\big(\mbessel\big)(z|w)^k\right|_{z=0}
 = a\prod_{l=0}^{m+2n-1}\left.\mbessel(z_l)^{\alpha_l}(z|w)^k\right|_{z=0}\\
&= 4^k (k(k-1)\cdots 1)\left(k+\dfrac{M}{2}-2\right)\cdots \left(\dfrac{M}{2}-1\right)a \prod_{l=0}^{m+2n-1} w_l^{\alpha_l}\\
&= 4^k k! \left(\dfrac{M}{2}-1\right)_k p(w),
\end{align*}
which gives us the desired result.
\end{proof}

\begin{Theorem}[reproducing kernel of $\Fock$]\label{Theorem repr kernel}
Suppose $M-2\not\in -2\N$ and define the superfunction $\mathbb K(z,w)$ by
\begin{gather*}
\mathbb K(z,w) := \Gamma\left(\dfrac{M}{2}-1\right)\widetilde I_{\frac{M}{2}-2}\big(\sqrt{(z|\overline w)}\big) = \sum_{k=0}^\infty \dfrac{1}{4^k k!}\left(\dfrac{M}{2}-1\right)_k^{-1}(z|\overline{w})^k,
\end{gather*}
where $\tilde{I}_{\alpha}$ is the I-Bessel introduced in Appendix~{\rm \ref{AppBes}}. For all $p\in \Fock$ we have
\begin{gather*}
\bfip{p,\mathbb K(z,w)} = p(w).
\end{gather*}
\end{Theorem}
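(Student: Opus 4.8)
The plan is to reduce the theorem to the already-established Lemma~\ref{Lemma rep kernel} by exploiting the orthogonality of the Bessel--Fischer product across homogeneous degrees (Proposition~\ref{PropOrthog}). The reproducing kernel $\mathbb K(z,w)$ is by construction the sum $\sum_{k=0}^\infty \mathbb K^k(z,w)$, where each summand $\mathbb K^k(z,w)$ is the degree-$k$ reproducing kernel from the lemma. So the strategy is: first verify that the form we want to prove is correct degree-by-degree, and then assemble the full statement by linearity and the direct sum decomposition of $\Fock$.

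\textbf{Step one: decompose $p$.} By the generalised Fischer decomposition (Proposition~\ref{PropFDgen}) applied to $\Fock = \mc P\big(\C^{m|2n}\big)/\big\langle R^2\big\rangle$, an arbitrary $p\in\Fock$ can be written as a finite sum $p = \sum_{k} p_k$ with $p_k\in\mc P_k\big(\C^{m|2n}\big)$ homogeneous of degree $k$ (a representative in the polynomial space). Since the Bessel--Fischer product is well-defined on $\Fock$ by Proposition~\ref{Restriction}, working with these representatives modulo $\big\langle R^2\big\rangle$ is legitimate.

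\textbf{Step two: match degrees.} Here I use that $\mathbb K(z,w)$ is graded as a polynomial in $z$: its homogeneous component of degree $k$ (in the $z$-variables) is precisely $\mathbb K^k(z,w)$. By Proposition~\ref{PropOrthog}, $\bfip{p_k, \mathbb K^l(z,w)} = 0$ whenever $k\neq l$, since $\mathbb K^l$ has $z$-degree $l$. Therefore
\begin{gather*}
\bfip{p,\mathbb K(z,w)} = \sum_{k,l}\bfip{p_k,\mathbb K^l(z,w)} = \sum_k \bfip{p_k,\mathbb K^k(z,w)}.
\end{gather*}
Applying Lemma~\ref{Lemma rep kernel} to each term gives $\bfip{p_k,\mathbb K^k(z,w)} = p_k(w) \mod \big\langle R^2_w\big\rangle$, and summing over $k$ yields $\bfip{p,\mathbb K(z,w)} = \sum_k p_k(w) = p(w)$ in $\Fock$.

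\textbf{The main obstacle} I anticipate is a bookkeeping subtlety rather than a deep difficulty: one must be careful that the conjugation $\bar q(z)=\overline{q(\bar z)}$ built into the Bessel--Fischer product interacts correctly with the $\overline{w}$ appearing in $\mathbb K(z,w)$, so that the product is taken in the $z$-variables while $w$ is treated as a parameter. Concretely, $\mathbb K^k(z,w)$ is a polynomial in $z$ (of degree $k$) with coefficients depending on $\overline w$, and applying $p(\mbessel)$ differentiates in $z$ only; the conjugation in the definition then converts the $(z\mid\overline w)^k$ into the $(z\mid w)^k$ that Lemma~\ref{Lemma rep kernel} actually computes with. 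I would state this matching explicitly to make the degree-wise reduction airtight. A secondary point is that the infinite series defining $\mathbb K(z,w)$ causes no convergence issue because $p$ has finite degree, so only finitely many terms $\mathbb K^k$ contribute to the pairing — this is exactly what the orthogonality in Step two guarantees.
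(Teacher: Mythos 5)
Your proposal is correct and follows essentially the same route as the paper, whose proof consists of citing exactly the three ingredients you use: Lemma~\ref{Lemma rep kernel} applied degree by degree, the orthogonality of Proposition~\ref{PropOrthog} to kill the cross terms (which also settles convergence, since only the term $\mathbb K^k$ matching $\deg p_k$ survives), and the Fischer decomposition to write $p$ as a sum of homogeneous pieces. Your additional remark on how the conjugation $\bar q(z)=\overline{q(\bar z)}$ turns $(z|\overline w)^k$ into the $(z|w)^k$ computed in the lemma is a worthwhile point of bookkeeping that the paper leaves implicit.
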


\begin{proof}By Lemma \ref{Lemma rep kernel}, the orthogonality property and the Fischer decomposition we find that
\begin{gather*}
\sum_{k=0}^\infty \mathbb K^k(z,w) = \sum_{k=0}^\infty \dfrac{1}{4^k k!}\left(\dfrac{M}{2}-1\right)_k^{-1}(z|\overline{w})^k,
\end{gather*}
has the desired property.
\end{proof}

The non-degeneracy of the Bessel--Fischer product on $\Fock$ is now an almost immediate result.

\begin{Proposition}[non-degenerate case]\label{nondeg}
For $M-2\not\in -2\N$ the Bessel--Fischer product is non-degenerate on the polynomial Fock space~$\Fock$, i.e., if
$
\bfip{p,q} =0,
$
for all $q\in \Fock$, then $p=0$.
\end{Proposition}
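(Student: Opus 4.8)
The plan is to leverage the reproducing kernel from Theorem~\ref{Theorem repr kernel}, which provides an immediate mechanism for detecting when a polynomial is forced to be zero by the Bessel--Fischer product. The key observation is that evaluation at any point of superspace can be recovered from the product against the kernel.

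First I would suppose that $p\in\Fock$ satisfies $\bfip{p,q}=0$ for all $q\in\Fock$. Since the reproducing kernel $\mathbb K(z,w)$ lies in (the appropriate completion of) the Fock space in each variable, I would specialise the test element $q$ to the kernel. Concretely, for each fixed $w$ the superfunction $z\mapsto \mathbb K(z,w)$ is a sum of homogeneous components $\mathbb K^k(z,w)$, and by the orthogonality property (Proposition~\ref{PropOrthog}) only the component matching the degree of $p$ contributes. Thus the hypothesis applied to these homogeneous kernel pieces gives $\bfip{p,\mathbb K^k(z,w)}=0$ for the relevant $k$, and summing over $k$ yields $\bfip{p,\mathbb K(z,w)}=0$.

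By the reproducing property of Theorem~\ref{Theorem repr kernel}, $\bfip{p,\mathbb K(z,w)}=p(w)$ as an element of $\Fock$, i.e., modulo $\big\langle R^2_w\big\rangle$. Combining this with the vanishing just established gives $p(w)=0$ in $\Fock$, which is exactly the statement that $p=0$ in the quotient $\mathcal P\big(\C^{m|2n}\big)/\big\langle R^2\big\rangle$. This completes the argument.

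The only subtlety I anticipate is justifying that one may legitimately plug the kernel into the second slot of the Bessel--Fischer product despite $\mathbb K(z,w)$ being an infinite series rather than a genuine polynomial. This is handled by the orthogonality: for $p$ homogeneous of degree $k$, the product $\bfip{p,\mathbb K(z,w)}$ reduces to the single finite-degree term $\bfip{p,\mathbb K^k(z,w)}$, so no convergence issue actually arises and everything stays within the polynomial setting. Since a general $p\in\Fock$ decomposes via the generalised Fischer decomposition (Proposition~\ref{PropFDgen}) into finitely many homogeneous pieces, applying the hypothesis degree by degree and invoking Lemma~\ref{Lemma rep kernel} on each piece suffices, so the result is indeed almost immediate once the reproducing kernel is in hand.
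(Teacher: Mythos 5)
Your proof is correct and takes essentially the same route as the paper, which also deduces $p(w)=\bfip{p,\mathbb K(z,w)}=0$ from the reproducing kernel of Theorem~\ref{Theorem repr kernel} and concludes $p=0$ in $\Fock$. Your extra care in reducing to the single homogeneous piece $\mathbb K^k(z,w)$ via Proposition~\ref{PropOrthog} and Lemma~\ref{Lemma rep kernel} merely makes explicit a point the paper leaves implicit, namely that no convergence issue arises when the infinite series kernel is placed in the second slot.
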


\begin{proof}Suppose $p \in \Fock$ is such that $\bfip{p,q} = 0$ for all $q\in \Fock$. Using the reproducing kernel we obtain $p(w) = \bfip{p,\mathbb K(z,w)} =0$. Hence $p=0$.
\end{proof}

Note that the previous proposition only works when $M-2\not\in -2\N$. For the $M-2\in {-}2\N$ case the Bessel--Fischer product will always be degenerate.

\begin{Proposition}[degenerate case]
For $M-2\in -2\N$ the Bessel--Fischer product is degenerate on the polynomial Fock space $\Fock$, i.e., there exists a $q\in \Fock$, $q\neq 0$ such that
$
\bfip{p,q} =0,
$
for all $p\in \Fock$.
\end{Proposition}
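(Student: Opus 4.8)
The plan is to exhibit an explicit nonzero element of $\Fock$ that is orthogonal to everything under the Bessel--Fischer product, exploiting the fact that when $M-2\in-2\N$ the Pochhammer symbol $\big(\tfrac{M}{2}-1\big)_k$ appearing in Lemma~\ref{Lemma rep kernel} vanishes for suitable $k$. Concretely, write $M-2=-2N$ with $N\in\N$, so that $\tfrac{M}{2}-1=-N$ and hence $\big(\tfrac{M}{2}-1\big)_k=(-N)(-N+1)\cdots(-N+k-1)=0$ precisely once $k\geq N+1$. By the orthogonality property (Proposition~\ref{PropOrthog}) the Bessel--Fischer product pairs distinct homogeneous degrees trivially, so it suffices to produce a nonzero homogeneous class in some fixed degree $k$ that is $\bfip{\cdot\,,\cdot}$-orthogonal to all of $\mc P_k\big(\C^{m|2n}\big)$.

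First I would revisit the computation in the proof of Lemma~\ref{Lemma rep kernel}. There the identity
\begin{gather*}
\bfip{p,(z|\overline w)^k} = 4^k k!\left(\tfrac{M}{2}-1\right)_k\, p(w)
\end{gather*}
is derived for any $p\in\mc P_k\big(\C^{m|2n}\big)$, working modulo $\langle R^2_w\rangle$. The key observation is that this computation never used $M-2\notin-2\N$; that hypothesis was only invoked afterwards to invert the scalar $4^k k!\big(\tfrac{M}{2}-1\big)_k$. So for $M-2=-2N$ and any degree $k\geq N+1$ the right-hand side is identically zero, which says exactly that
\begin{gather*}
\bfip{p,(z|\overline w)^k}=0 \qquad \text{for all } p\in\mc P_k\big(\C^{m|2n}\big).
\end{gather*}
Taking $q$ to be the class of $(z|w)^{k}$ in $\Fock$ (for the smallest admissible $k=N+1$) then gives, by orthogonality across degrees and the displayed vanishing within degree $k$, that $\bfip{p,q}=0$ for every $p\in\Fock$.

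It remains to check that this candidate $q$ is nonzero in the quotient $\Fock=\mc P\big(\C^{m|2n}\big)/\langle R^2\rangle$, and this is the step I expect to be the only real obstacle. The danger is that $(z|w)^{k}$ might lie in the ideal $\langle R^2\rangle$, making $q=0$ and the statement vacuous. To rule this out I would use the generalised Fischer decomposition (Proposition~\ref{PropFDgen}), under which $\Fock\cong\bigoplus_l\gsh_l\big(\C^{m|2n}\big)$, and show that the degree-$k$ component of $(z|w)^{k}$ has a nonvanishing projection onto $\gsh_k\big(\C^{m|2n}\big)$; equivalently one computes $\Delta_z(z|w)^{k}$ and checks it is not a multiple of $R^2_z$ times a polynomial in a way that would force the class to vanish. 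Since $z|w$ is (up to the conjugation and the factor $2$) the polarized radial form, $\Delta_z(z|w)^k$ produces terms proportional to $(w|w)=R^2_w$ and to lower powers, and one verifies directly that $(z|w)^k$ is not congruent to $0$ modulo $\langle R^2_z\rangle$ for $k=N+1$. Once nonvanishing is secured, the orthogonality computation above completes the proof.
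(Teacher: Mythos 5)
There is a genuine gap, and it is not the one you flagged. The identity you extract from Lemma~\ref{Lemma rep kernel} is not an identity of scalars: it holds only modulo $\big\langle R^2_w\big\rangle$, and the correction terms proportional to $R^2_w$ do \emph{not} disappear when the Pochhammer symbol does. Tracking the proof of that lemma, what one actually gets is
\begin{gather*}
\bfip{p,(z|\overline{w})^k} = 4^k k!\left(\tfrac{M}{2}-1\right)_k p(w) + R^2_w\, h_p(w),
\end{gather*}
for some polynomial $h_p$ in $w$, and $h_p\neq 0$ in general. Concretely, take $M=0$ (so $N=1$, $k=2$; e.g., $m=2$, $n=1$) and $p=z_1^2$: iterating the computation in the lemma gives $\mbessel(z_1)^2(z|w)^2 = 8M(M-2)w_1^2-8(M-2)R^2_w$, which at $M=0$ equals $16R^2_w\neq 0$. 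So for formal $w$ your displayed claim $\bfip{p,(z|\overline{w})^k}=0$ is false; all you can conclude is that this polynomial in $w$ lies in $\big\langle R^2_w\big\rangle$. Relatedly, with $w$ formal, ``the class of $(z|w)^k$'' is not an element of $\Fock$ at all (it is a polynomial in both $z$ and $w$), and quotienting by $\big\langle R^2_z\big\rangle$ does nothing to kill the $R^2_w$-terms; the nonvanishing check you describe, which is indeed doable, does not repair the orthogonality claim.

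The proposal can be repaired by specialising $w$ to a nonzero classical point $w_0\in\C^m$ (odd coordinates set to zero) on the null cone, $R^2(w_0)=0$, for instance $w_0=(1,1,0,\dots,0)$: every element of $\big\langle R^2_w\big\rangle$ vanishes at such a point, so $\bfip{p,(z|\overline{w_0})^k}=0$ for all $p\in\mc P_k\big(\C^{m|2n}\big)$, and $(z|\overline{w_0})^k\notin\big\langle R^2\big\rangle$ (look at the component containing no odd variables and use that over $\C$ the quadric $-z_0^2+z_1^2+\dots+z_{m-1}^2$ cannot divide the $k$-th power of a linear form). But note what this produces: since $\Delta_z(z|w)^k = 4k(k-1)R^2_w(z|w)^{k-2}$, the specialised polynomial $(z|\overline{w_0})^k$ is \emph{harmonic} of degree $k=2-\frac{M}{2}$, so the repaired argument lands precisely inside the paper's own proof, which is shorter and bypasses the kernel entirely: for any $q\in\mc H_{2-M/2}\big(\C^{m|2n}\big)$ one has $\mbessel(z_i)q=\pm\big((M-2+2\E)\pt i q - z_i\Delta q\big)=0$ for every $i$, because $\pt i q$ is homogeneous of degree $1-\frac{M}{2}$ (so the factor $M-2+2\E$ annihilates it) and $\Delta q=0$; hence $\bfip{p,q}=0$ for all $p$ directly from the definition of the Bessel--Fischer product, while $q\neq 0$ in $\Fock$ exists by Proposition~\ref{Prop dimension} together with the generalised Fischer decomposition.
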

\begin{proof}Suppose $M-2\in -2\N$ and $q\in \mc H_{2-\frac{M}{2}}\big(\C^{m|2n}\big)\subset\gsh_{2-\frac{M}{2}}\big(\C^{m|2n}\big)\subset \Fock$. Then
\begin{gather*}
\mbessel(z_i)q = \pm ((M-2 +2\E)\pt i q - z_i\Delta q) = 0,
\end{gather*}
for all $i\in\{0, 1, \ldots, m+2n-1\}$. For all $p\in \Fock$ it follows then immediately from the definition that
$\bfip{p,q} =0$. Since
\begin{gather*}
\dim \mc H_{2-\frac{M}{2}}\big(\C^{m|2n}\big) \geq \dim \mc P_{2-\frac{M}{2}}\big(\C^{m-1|2n}\big) \geq 1,
\end{gather*}
we conclude that such a $q\neq 0$ exists.
\end{proof}

\section[The Fock model of $\mf{osp}(m,2|2n)$]{The Fock model of $\boldsymbol{\mf{osp}(m,2|2n)}$}\label{The Fock model}
In \cite{HKMO} the Fock representation $\rho := \pi_\C \circ c$ is obtained by twisting the complexification of the Schr\" odinger representation $\pi_\C$ with the Cayley transform~$c$.
 This Cayley transform is an isomorphism of $\mf{g}_\C$ which induces a Lie algebra isomorphism between~$\mf k_\C$ and $\mathfrak{istr}(J_\C)$. We will use a similar approach in our construction of the Fock model. We start by defining a Cayley transform~$c$ in our setting.
\subsection{Definition and properties}
Let $\mf{g}_\C$ and $\mf{k}_\C$ be the complexified versions of $\mf g$ and $\mf k$ introduced in Section~\ref{SSSchrodRep}, i.e.,
\begin{gather*}
\mathfrak{g}_\C = \TKK(J_\C) \cong \mathfrak{osp}_\C(m+2|2n),\\
\mf k_\C = \{(z, I, -z) \colon I \in \Inn(J_\C), z\in J_\C\}\cong \mf{osp}_\C(m|2n)\oplus \C,
\end{gather*}
Let $\imath$ denote the complex unit. Define the Cayley transform $c \in \End(\mf{g}_\C)$ as
\begin{gather*}
c := \exp\left(\frac{\imath}{2}\ad(e_0^-)\right)\exp\big(\imath\ad\big(e_0^+\big)\big),
\end{gather*}
with $e_0^-$ and $e_0^+$ the copies of $e_0$ in $J^-$ and $J^+$, respectively.
\begin{Proposition} Using the decomposition $\mf{g}_\C= J^-_\C\oplus \mathfrak{istr}(J_\C)\oplus J^+_\C $ we obtain the following explicit expression for the Cayley transform
\begin{itemize}\itemsep=0pt
\item $c(a,0,0) = \left(\dfrac{a}{4}, \imath L_a, a\right)$,
\item $c(0,L_a+I,0) = \left(\imath \dfrac{a}{4}, I, -\imath a\right)$,
\item $c(0,0,a) = \left(\dfrac{a}{4}, -\imath L_a, a\right)$,
\end{itemize}
with $a\in J$ and $I\in \Inn(J_\C)$. It induces a Lie superalgebra isomorphism:
\begin{gather*}
c\colon \ \mathfrak{k}_\C \rightarrow \mathfrak{istr}(J_\C),\qquad (a,I,-a) \mapsto I + 2\imath L_a.
\end{gather*}
\end{Proposition}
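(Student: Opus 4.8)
The plan is to compute the adjoint operators $\ad(e_0^-)$ and $\ad(e_0^+)$ explicitly on each of the three graded summands $J_\C^-$, $\mathfrak{istr}(J_\C)$, $J_\C^+$, and then assemble the two exponentials. The computation rests on two properties of the Jordan unit $e_0$: first $L_{e_0} = \mathrm{id}$, so that $L_{e_0\cdot a} = L_a$ and $[L_{e_0}, L_a] = 0$; second, for any inner derivation $[L_b,L_c]$ one has $[L_b,L_c]e_0 = bc - (-1)^{|b||c|}cb = 0$ by supercommutativity of the Jordan product. Feeding these into the defining brackets of $\TKK(J_\C)$ gives, for instance, $[e_0^+, u] = 2L_u$ for $u \in J_\C^-$, then $[e_0^+, L_a] = -a^+$ and $[e_0^+, I] = 0$ for $I \in \Inn(J_\C)$, and $[e_0^+, x] = 0$ for $x \in J_\C^+$; the operator $\ad(e_0^-)$ is treated symmetrically, landing one step lower in the grading at each application.

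The key structural observation is that $\ad(e_0^+)$ raises (resp.\ $\ad(e_0^-)$ lowers) the three-step TKK-grading by one, whence $\ad(e_0^\pm)^3 = 0$. Consequently both exponential series terminate after the quadratic term, and one can write down $\exp(\imath\ad(e_0^+))$ and $\exp(\tfrac{\imath}{2}\ad(e_0^-))$ on each graded piece by hand. Composing them in the correct order (first $\exp(\imath\ad(e_0^+))$, then $\exp(\tfrac{\imath}{2}\ad(e_0^-))$) and evaluating on $(a,0,0)$, on $(0, L_a+I, 0)$, and on $(0,0,a)$ yields the three displayed formulas for $c$. Each image is obtained by collecting the $J_\C^-$, $\mathfrak{istr}(J_\C)$ and $J_\C^+$ contributions separately; the expected cancellations (for example the two $\tfrac{a}{4}$-terms of different origin combining correctly) serve as a useful internal check.

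For the statement about $\mathfrak{k}_\C$, I write a general element as $(a, I, -a) = a^- + I - a^+$ and apply linearity of $c$ together with the three formulas just established, noting that on a pure inner derivation $c(0,I,0) = I$. The $J_\C^-$ and $J_\C^+$ components then cancel and the $\mathfrak{istr}(J_\C)$ components add to give $c(a,I,-a) = I + 2\imath L_a$, which lies in $\mathfrak{istr}(J_\C)$. Finally, since $e_0^\pm$ are even and $\ad(e_0^\pm)$ are nilpotent, $\exp(\imath\ad(e_0^+))$ and $\exp(\tfrac{\imath}{2}\ad(e_0^-))$ are automorphisms of $\mathfrak{g}_\C$, and hence so is $c$; its restriction to $\mathfrak{k}_\C$ is therefore a homomorphism of Lie superalgebras, and the explicit formula $(a,I,-a)\mapsto I + 2\imath L_a$ is a linear bijection onto $\mathfrak{istr}(J_\C)$ because of the direct sum $\mathfrak{istr}(J_\C) = \{L_u : u \in J_\C\} \oplus \Inn(J_\C)$ and the injectivity of $u \mapsto L_u$ (which itself follows from $L_u e_0 = u$). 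This yields the claimed isomorphism.

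The main obstacle is bookkeeping rather than conceptual: one must track carefully into which copy $J_\C^\pm$ each iterated bracket lands and keep the scalar factors from the two exponential series straight. The super-signs cause no genuine trouble here because $e_0$ is even, so $\ad(e_0^\pm)$ are even derivations; the decisive simplification throughout is the nilpotency $\ad(e_0^\pm)^3 = 0$, which truncates every series to three terms.
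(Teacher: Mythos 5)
Your proposal is correct and follows essentially the same route as the paper: both exploit the nilpotency $\ad\big(e_0^\pm\big)^3=0$ coming from the $3$-grading of $\TKK(J_\C)$ to truncate the exponential series, and then evaluate the composite operator on each of the three graded summands using the defining TKK bracket relations together with $L_{e_0}=\mathrm{id}$ and $I e_0 = 0$ for $I\in\Inn(J_\C)$. Your handling of the final isomorphism claim (restriction of an automorphism to $\mathfrak{k}_\C$, with bijectivity onto $\mathfrak{istr}(J_\C)$ from the direct sum $\{L_u\colon u\in J_\C\}\oplus\Inn(J_\C)$ and injectivity of $u\mapsto L_u$) is spelled out in more detail than in the paper, which leaves that step implicit after the three explicit computations.
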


\begin{proof}Expanding $\exp\big(\frac{\imath}{2}\ad\big(e_0^-\big)\big)\exp\big(\imath\ad\big(e_0^+\big)\big)$ we obtain
\begin{gather*}
c = 1 + \dfrac{\imath}{2}\ad\big(e_0^-\big)-\dfrac{1}{8}\ad\big(e_0^-\big)\ad\big(e_0^-\big)+\imath\ad\big(e_0^+\big) -\dfrac{1}{2}\ad\big(e_0^+\big)\ad\big(e_0^+\big)-\dfrac{1}{2}\ad\big(e_0^-\big)\ad\big(e_0^+\big)\\
\hphantom{c =}{} -\dfrac{\imath}{4}\ad\big(e_0^-\big)\ad\big(e_0^+\big)\ad\big(e_0^+\big) -\dfrac{\imath}{8}\ad\big(e_0^-\big)\ad\big(e_0^-\big)\ad\big(e_0^+\big)\\
\hphantom{c =}{} +\dfrac{1}{16}\ad\big(e_0^-\big)\ad\big(e_0^-\big)\ad\big(e_0^+\big)\ad\big(e_0^+\big).
\end{gather*}
We have the following straightforward calculations
\begin{gather*}
c(a,0,0) = a^- + 0+ 0 +2\imath L_a + a^+ - a^- - \imath L_a +0 + \dfrac{a^-}{4}= \left(\dfrac{a}{4}, \imath L_a, a\right),
\\
c(0,L_a+I,0) = (L_a+I) + \imath\dfrac{a^-}{2} +0 -\imath a^+ +0 - L_a +0 - \imath\dfrac{a^-}{4} +0= \left(\imath \dfrac{a}{4}, I, -\imath a\right),
\\
c(0,0,a) = a^+ -\imath L_a + \dfrac{a^-}{4} +0 +0 +0 +0+ 0+0 = \left(\dfrac{a}{4}, -\imath L_a, a\right),
\end{gather*}
proving the theorem.
\end{proof}

We complexify the Schr\"odinger representation $\pi$ given in Section \ref{SSSchrodRep} to obtain a representation $\pi_\C$ of $\mathfrak{g}_\C = J^-_\C\oplus \mathfrak{istr}(J_\C)\oplus J^+_\C$ acting on $\Fock$.
Explicitly, $\pi_\C$ is given by
\begin{itemize}\itemsep=0pt
\item $\pi_{\C} (e_l, 0, 0) = -2\imath z_l $,
\item $\pi_{\C} (0, L_{e_k}, 0) = -z_0\pt k + z_k\pt 0$,
\item $\pi_{\C} (0, [L_{e_i}, L_{e_j}], 0) = z_i\pt j - (-1)^{|i||j|}z_j\pt i$,
\item $\pi_{\C} (0, L_{e_0}, 0) = \frac{2-M}{2}-\E$,
\item $\pi_{\C} (0, 0, e_l) = -\frac{1}{2}\imath \mbessel(z_l)$,
\end{itemize}
with $i,j,k\in\{1, 2, \ldots, m+2n-1\}$ and $l\in\{0, 1, 2, \ldots, m+2n-1\}$. Since $L_{ij}$, $\E$ and $\mbessel(z_l)$ map~$\big\langle R^2\big\rangle $ into $\big\langle R^2\big\rangle $ this representation is well defined on $\Fock$. As in the bosonic case, we will define the \textit{Fock representation} $\rho$ as the composition of $\pi_{\C}$ with the Cayley transform $c$,
\begin{gather*}
\rho := \pi_{\C} \circ c.
\end{gather*}
So $\rho$ of $\mf{g} = J^-\oplus \mathfrak{istr}(J)\oplus J^+$ acting on $\Fock$ is given as follows
\begin{itemize}\itemsep=0pt
\item $\rho(e_0, 0, 0) = -\dfrac{\imath}{2} \big(z_0+\mbessel(z_0)+M-2+2\E\big)$,
\item $\rho(e_k, 0, 0) = -\dfrac{\imath}{2} \big(z_k+\mbessel(z_k)+2(z_0\pt k - z_k\pt 0)\big)$,
\item $\rho(0, L_{e_0}, 0) = \dfrac{1}{2}\big(z_0-\mbessel(z_0)\big)$,
\item $\rho(0, L_{e_k}, 0) = \dfrac{1}{2}\big(z_k-\mbessel(z_k)\big)$,
\item $\rho(0, [L_{e_i}, L_{e_j}], 0) = z_i\pt j - (-1)^{|i||j|}z_j\pt i$,
\item $\rho(0, 0, e_k) = -\dfrac{\imath}{2} \big(z_k+\mbessel(z_k)-2(z_0\pt k - z_k\pt 0)\big)$,
\item $\rho(0, 0, e_0) = -\dfrac{\imath}{2} \big(z_0+\mbessel(z_0)+2-M-2\E\big)$,
\end{itemize}
with $i,j,k\in\{1, 2, \ldots, m+2n-1\}$.

\begin{Proposition}[$\mf{osp}(m,2|2n)$-invariance]\label{PropSkewSymRho}
The Fock representation $\rho$ on $\Fock$ is skew-super\-symmetric with respect to the Bessel--Fischer product, i.e.,
\begin{align*}
\bfip{\rho(X)p,q} = - (-1)^{|X||p|}\bfip{p,\rho(X)q},
\end{align*}
for all $X\in \mathfrak{g}$ and $p,q\in \Fock$.
\end{Proposition}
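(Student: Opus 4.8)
The plan is to use that the defining identity is linear in $X$ (the parity sign referring, as always, to homogeneous elements), so that it suffices to verify skew-supersymmetry on the spanning set of $\mf g$ given by the explicit list of operators realizing $\rho$. First I would record the adjointness relations the Bessel--Fischer product satisfies. From Proposition~\ref{degreeshift} together with the superhermitian property (Proposition~\ref{supersym}) one gets, now for \emph{all} $p,q\in\mc P\big(\C^{m|2n}\big)$,
\begin{gather*}
\bfip{z_i p, q} = (-1)^{|i||p|}\bfip{p,\mbessel(z_i)q}, \qquad \bfip{\mbessel(z_i)p,q} = (-1)^{|i||p|}\bfip{p,z_i q},
\end{gather*}
the second being obtained from the first by a short parity computation using Proposition~\ref{supersym}. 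Next, the orthogonality of homogeneous components (Proposition~\ref{PropOrthog}) shows that the Euler operator is symmetric, $\bfip{\E p, q} = \bfip{p,\E q}$, since $\E$ acts as the degree. Finally, Corollary~\ref{PropLij} supplies the adjoint relations for $L_{ij}$ and $L_{0i}$.

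I would then set up the bookkeeping: call an operator $A$ of parity $|A|$ \emph{super-symmetric} if $\bfip{Ap,q}=(-1)^{|A||p|}\bfip{p,Aq}$ and \emph{super-skew} if $\bfip{Ap,q}=-(-1)^{|A||p|}\bfip{p,Aq}$; the claim is exactly that each $\rho(X)$ is super-skew. The crucial remark is that, since $\bfip{\cdot\,,\cdot}$ is conjugate-linear in the second slot, multiplication by $\imath$ interchanges super-symmetric and super-skew operators, while multiplication by a real scalar preserves both classes. With the relations above, $z_i+\mbessel(z_i)$, $L_{0k}$, $\E$ and the real constants are all super-symmetric, whereas $z_i-\mbessel(z_i)$ and $L_{ij}$ are super-skew.

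It then remains to read off each generator. The structure-algebra generators $\rho(0,[L_{e_i},L_{e_j}],0)=L_{ij}$ and $\rho(0,L_{e_l},0)=\tfrac12\big(z_l-\mbessel(z_l)\big)$ are super-skew with no factor of $\imath$. Each of the remaining generators $\rho(e_l,0,0)$ and $\rho(0,0,e_l)$ is $-\tfrac{\imath}{2}$ times a sum of super-symmetric operators—for instance $z_k+\mbessel(z_k)+2L_{0k}$, or $z_0+\mbessel(z_0)+M-2+2\E$—and is therefore super-skew. In every case the parity of the operator equals the parity of the corresponding element of $\mf g$, so all signs line up.

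I expect the only delicate point to be the consistent tracking of parity signs, in particular deriving the second $\mbessel(z_i)$-relation from the first and confirming that each summand inside the bracketed expressions carries the same super-symmetry sign $(-1)^{|k||p|}$; beyond this, no genuinely new computation is required past Propositions~\ref{degreeshift} and~\ref{supersym} and Corollary~\ref{PropLij}.
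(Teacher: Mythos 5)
Your proposal is correct and takes essentially the same approach as the paper's proof: verify skew-supersymmetry on a spanning set of generators, using Proposition~\ref{degreeshift} plus superhermitianity for the operators $z_l\pm\mbessel(z_l)$, Corollary~\ref{PropLij} for $L_{ij}$ and $L_{0i}$, orthogonality (so that $\E$ acts as a real scalar) for the Euler term, and the observation that the factor $\imath$ turns super-symmetric operators into super-skew ones under the sesquilinear form. The only cosmetic difference is your choice of spanning set ($\rho(e_l,0,0)$, $\rho(0,0,e_l)$, $\rho(0,L_{e_l},0)$, $\rho(0,[L_{e_i},L_{e_j}],0)$) versus the paper's combinations such as $\rho(e_i,0,-e_i)$ and $\rho(e_k,0,e_k)$, which changes nothing of substance.
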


\begin{proof}
Suppose $p,q\in \Fock$. From Corollary \ref{PropLij} we obtain
\begin{gather*}
\bfip{\rho(e_i,0,-e_i)p,q} = \bfip{-2\imath L_{0i}p,q}= -(-1)^{|i||p|}\bfip{p,2\overline\imath L_{0i}q} \\
\hphantom{\bfip{\rho(e_i,0,-e_i)p,q}}{}
 = -(-1)^{|i||p|}\bfip{p,\rho(e_i,0,-e_i)q}
\end{gather*}
and
\begin{gather*}
\bfip{\rho(0, [L_{e_i}, L_{e_j}], 0)p,q} = \bfip{ L_{ij}p,q} = -(-1)^{(|i|+|j|)|p|}\bfip{p,L_{ij}q} \\
\hphantom{\bfip{\rho(0, [L_{e_i}, L_{e_j}], 0)p,q}}{} = - (-1)^{(|i|+|j|)|p|}\bfip{p,\rho(0, [L_{e_i}, L_{e_j}], 0)q},
\end{gather*}
for $i,j \in \{1,\ldots, m+2n-1\}$.
Furthermore, we have
\begin{gather*}
\bfip{\rho(e_k,0,e_k)p,q} = \big\langle {-}\imath \big(z_k+\mbessel(z_k)\big)p,q\big\rangle_{\mathcal B}
 = -\imath\big( \bfip{z_kp,q}+\big\langle\mbessel(z_k)p,q\big\rangle_{\mathcal B}\big) \\
\hphantom{\bfip{\rho(e_k,0,e_k)p,q}}{} = -\imath(-1)^{|k||p|}\big( \big\langle p,\mbessel(z_k)q\big\rangle_{\mathcal B}+\bfip{p,z_k q}\big)\\
\hphantom{\bfip{\rho(e_k,0,e_k)p,q}}{}
 = -(-1)^{|k||p|}\big\langle p,\overline{\imath} \big( z_k+\mbessel(z_k)\big)q\big\rangle_{\mathcal B} \\
\hphantom{\bfip{\rho(e_k,0,e_k)p,q}}{}
 = - (-1)^{|k||p|}\bfip{p,\rho(e_k,0,e_k)q}
\end{gather*}
and similarly
\begin{gather*}
\bfip{\rho(0,L_{e_k},0)p,q} = \left\langle \dfrac{1}{2}\big( z_k-\mbessel(z_k)\big)p,q\right\rangle_{\mathcal B} = - (-1)^{|k||p|}\bfip{p,\rho(0,L_{e_k},0)q},
\end{gather*}
for $k \in \{0,\ldots, m+2n-1\}$. Because of Proposition~\ref{PropOrthog} we may assume $p$ and $q$ are homogeneous polynomials the same degree. We now have
\begin{align*}
\bfip{\rho(e_0,0,-e_0)p,q}& = \bfip{-2\imath (M-2+2\E)p,q}= -\bfip{p,-2\imath (M-2+2\E)q} \\
& = -\bfip{p,\rho(e_0,0,-e_0)q},
\end{align*}
which proves the theorem.
\end{proof}

\subsection[The $(\mf g, \mf k)$-module $F$]{The $\boldsymbol{(\mf g, \mf k)}$-module $\boldsymbol{F}$}\label{Section module F}
We define
\begin{gather*}
F := U(\mathfrak{g})1 \mod \big\langle R^2\big\rangle ,
\end{gather*}
where the $\mf g$ module structure is given by the Fock representation $\rho$. We also introduce the notation
\begin{gather*}
F_k := \bigoplus_{l=0}^k z_0^l\mathcal{H}_{k-l}\big(\C^{m-1|2n}\big) \mod \big\langle R^2\big\rangle
\end{gather*}
and reintroduce $r^2$ from Section \ref{SSRadSub} as its complexified version:
\begin{align*}
r^2 := \sum_{i=1}^{m+2n-1}z^i z_i.
\end{align*}
We have $R^2 = -z_0^2+r^2$ and since we are working modulo $\big\langle R^2\big\rangle $ this implies $z_0^2 = r^2$. In the following, we will work modulo $\big\langle R^2\big\rangle $ but omit $\big\langle R^2\big\rangle $ from our notation. For $M-1\not\in -2\N$ we now find
\begin{align*}
F_k &= \bigoplus_{l=0}^k z_0^l\mathcal{H}_{k-l}\big(\C^{m-1|2n}\big)
 = \bigoplus_{l=0}^{\big\lfloor\frac{k}{2}\big\rfloor}z_0^{2l}\mathcal{H}_{k-2l}\big(\C^{m-1|2n}\big) \oplus \bigoplus_{l=0}^{\big\lfloor\frac{k+1}{2}\big\rfloor-1}z_0^{2l+1}\mathcal{H}_{k-2l-1}\big(\C^{m-1|2n}\big)\\
&= \bigoplus_{l=0}^{\big\lfloor\frac{k}{2}\big\rfloor}r^{2l}\mathcal{H}_{k-2l}\big(\C^{m-1|2n}\big) \oplus \bigoplus_{l=0}^{\big\lfloor\frac{k+1}{2}\big\rfloor-1}z_0 r^{2l}\mathcal{H}_{k-2l-1}\big(\C^{m-1|2n}\big)\\
&\cong \mc P_k(\C^{m-1|2n})\oplus z_0\mc P_{k-1}\big(\C^{m-1|2n}\big),
\end{align*}
where we made use of the Fischer decomposition (Theorem~\ref{LemFC}) in the last step. In particular, by Proposition~\ref{Prop dimension},
\begin{gather*}
\dim F_k = \dim \mc P_k\big(\C^{m-1|2n}\big) + \dim\mc P_{k-1}\big(\C^{m-1|2n}\big) = \dim \mc H_k\big(\C^{m|2n}\big).
\end{gather*}
If $M\not\in -2\N$, then for $p\in F_k$ the Fischer decomposition on $ \mc P_k\big(\C^{m|2n}\big)$ also gives
\begin{gather*}
p = \sum_{l=0}^{\left\lfloor\frac{k}{2}\right\rfloor} R^{2l} h_{k-2l} = h_{k} \mod \big\langle R^2\big\rangle ,
\end{gather*}
with $h_{k-2l}\in \mc H_{k-2l}\big(\C^{m|2n}\big)$. This implies $F_k\cong \mc H_{k}\big(\C^{m|2n}\big)$ for $M\geq 2$.

\begin{Theorem}[decomposition of $F$]\label{ThDecF}
For $M\geq 3$, we have the following:
\begin{itemize}\itemsep=0pt
\item[$(1)$]$F_k$ is an irreducible $\mathfrak{k}$-module.
\item[$(2)$]$F$ is an irreducible $\mathfrak{g}$-module and its $\mathfrak{k}$-type decomposition is given by
\begin{gather*}
F = \bigoplus_{k=0}^\infty F_k.
\end{gather*}
\item[$(3)$]An explicit decomposition of $F_k$ into irreducible $\mathfrak{k}_{0}$-modules is given by
\begin{gather*}
F_k = \bigoplus_{l=0}^k z_0^l\mathcal{H}_{k-l}\big(\C^{m-1|2n}\big) \mod \big\langle R^2\big\rangle .
\end{gather*}
\end{itemize}
\end{Theorem}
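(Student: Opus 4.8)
The three assertions rest on the structural facts recorded immediately before the statement, so my plan is to deduce (3) and (1) quickly from the representation theory of spherical harmonics and to concentrate the real effort on the irreducibility in (2). For part~(3) I would note that $\mf k_0 = \Inn(J) \cong \mf{osp}(m-1|2n)$ acts on $\Fock$ through the operators $L_{ij}$ with $i,j \in \{1,\ldots,m+2n-1\}$, all of which fix the variable $z_0$. Hence each summand $z_0^l\mc H_{k-l}\big(\C^{m-1|2n}\big)$ is isomorphic, as a $\mf k_0$-module, to $\mc H_{k-l}\big(\C^{m-1|2n}\big)$, which is irreducible by Proposition~\ref{PropIrred} because the superdimension $M-1 \geq 2$ does not lie in $-2\N$. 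That the displayed sum is direct modulo $\langle R^2\rangle$ is exactly the computation carried out before the theorem (reduce even and odd powers of $z_0$ using $z_0^2 = r^2$ and apply the Fischer decomposition of Proposition~\ref{LemFC} on $\C^{m-1|2n}$), so (3) follows.

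For part~(1) I would use the identification $F_k \cong \mc H_k\big(\C^{m|2n}\big)$ valid for $M \geq 2$, realised by the harmonic projection $\mc P_k\big(\C^{m|2n}\big) \to \mc H_k\big(\C^{m|2n}\big)$. This projection is $\mf k$-equivariant: by Lemma~\ref{Lemsltriple} the operators $L_{ij}$ commute with $R^2$ and with $\Delta$, so they preserve both $\mc H_k\big(\C^{m|2n}\big)$ and $R^2\mc P_{k-2}\big(\C^{m|2n}\big)$ and descend to $F_k$. Under $\mf k \cong \mf{osp}(m|2n) \oplus \R$ the first factor then acts by the standard realisation on $\mc H_k\big(\C^{m|2n}\big)$, irreducible by Proposition~\ref{PropIrred} since $M \notin -2\N$, while the central $\R$, represented through $\E$, acts by the scalar $k$. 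Thus $F_k$ is an irreducible $\mf k$-module.

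The heart of the argument is part~(2). The observation I would build on is that multiplication by each coordinate $z_l$ and each modified Bessel operator $\mbessel(z_l)$ belongs to the associative algebra generated by $\rho(\mf g)$: from the explicit formulas for $\rho(e_l,0,0)$, $\rho(0,0,e_l)$ and $\rho(0,L_{e_l},0)$ one solves linearly for the triple $z_l$, $\mbessel(z_l)$, $L_{0l}$. Consequently $U(\mf g)\cdot 1$ contains every monomial, whence $F = \Fock$; and since $\E$ is an affine combination of $\rho(e_0,0,-e_0) \in \rho(\mf k)$ and acts by $k$ on the degree-$k$ part $F_k$, the $\E$-eigenspace decomposition of $\Fock$ is precisely $F = \bigoplus_k F_k$. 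For irreducibility I would take a nonzero $\mf g$-submodule $N$; being $\E$-stable it is graded, $N = \bigoplus_k (N \cap F_k)$, and by part~(1) each $N \cap F_k$ equals $0$ or $F_k$, so $N = \bigoplus_{k \in S} F_k$ for some $S \subseteq \N$. Taking $k_0 = \min S$ and a harmonic representative $0 \neq p \in F_{k_0}$, the lowering operator gives $\mbessel(z_l)p = \pm(M - 2 + 2k_0)\pt{z^l}p$ with $\pt{z^l}p$ harmonic of degree $k_0 - 1$ (a nonzero harmonic represents a nonzero class in $\Fock$); if $k_0 \geq 1$ then $p$ is non-constant and $M - 2 + 2k_0 > 0$, so $\mbessel(z_l)p \neq 0$ for some $l$ and, since $\mbessel(z_l) \in \rho(U(\mf g))$, it lies in $N \cap F_{k_0-1}$, contradicting minimality. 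Hence $0 \in S$, so $1 \in N$ and $N \supseteq U(\mf g)\cdot 1 = F$.

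The main obstacle is exactly this part~(2): one must verify the linear solvability that exhibits $z_l,\, \mbessel(z_l) \in \rho(U(\mf g))$ — this is what turns the coordinate multiplications and Bessel operators into genuine module maps along the $\mf k$-ladder — and then check the non-vanishing of the lowering step, which reduces on harmonics to $M - 2 + 2k_0 \neq 0$. Parts~(1) and~(3) are comparatively routine once Proposition~\ref{PropIrred} and the dimension identity of Proposition~\ref{Prop dimension} are in hand, and the hypothesis $M \geq 3$ enters only to guarantee $M,\, M-1,\, M-2 \notin -2\N$ together with $M \geq 2$, which are the conditions under which all the invoked Fischer decompositions and irreducibility results apply.
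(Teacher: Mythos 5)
Your proposal is correct and takes essentially the same approach as the paper: parts (1) and (3) follow from Proposition~\ref{PropIrred} together with the identification $F_k\cong\mc H_k\big(\C^{m|2n}\big)$, and part (2) is the same ladder argument --- coordinate multiplications and Bessel operators realised inside the algebra generated by $\rho(\mf g)$ move between consecutive $\mf k$-types, each irreducible --- which the paper simply runs on the explicit vectors $z_0^k$ instead of your minimal-degree/harmonic-representative bookkeeping. One harmless slip: for harmonic $p$ of degree $k_0$ the lowering step reads $\mbessel(z_l)p=\pm(M+2k_0-4)\pt{z^l}p$ (the Euler operator acts after $\pt{z^l}$), not $\pm(M-2+2k_0)\pt{z^l}p$; since this constant is still strictly positive for $M\geq 3$ and $k_0\geq 1$, your argument is unaffected.
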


\begin{proof}We have the following elements of the action of $\mf g$:
\begin{gather*}
\rho^+_0 := \rho\left(c^{-1}\left(\frac{-e_0}{2},0,0\right)\right) =\imath z_0,\\
\rho^-_0 := \rho\big(c^{-1}(0,0,-2e_0)\big) = \imath\mbessel(z_0),\\
\rho_{0i} := \rho\left(-\frac{e_i}{2},0,\frac{e_i}{2} \right) = \imath L_{0i},\\
\rho_{ij} :=\rho(0,[L_{e_i},L_{e_j}],0) = L_{ij},
\end{gather*}
with $i,j\in\{1,\ldots, m+2n-1\}$.
The elements $\rho_{ij}$ for $i, j \in \{1,\ldots , m+2n-1\}$, $i\leq j$ give rise to an irreducible representation of $\mf k_{0}$ on $\mc H_k\big(\C^{m-1|2n}\big)$ as a result of Proposition~\ref{PropIrred}. These elements leave $r^2 = z_0^2$ invariant and therefore also leave powers of $z_0$ invariant, which proves~$(3)$.

 Again by Proposition~\ref{PropIrred} the elements $\rho_{ij}$ and $\rho_{0i}$ give rise to an irreducible representation of $\mf k$ on $\mc H_k\big(\C^{m|2n}\big)\cong F_k$, which proves~$(1)$. For the first two elements we have
\begin{gather*}
\rho^+_0\big(z_0^k\big) = \imath z_0^{k+1},\\
\rho^-_0\big(z_0^k\big) = \imath\mbessel(z_0) z_0^k= \imath k(M+2k-4)z_0^{k-1} - \imath k(k-1)z_0^{k-1} = \imath k(M+k-3)z_0^{k-1},
\end{gather*}
which shows that $\rho^+_0$ allows us to go to polynomials of higher degrees while $\rho^-_0$ allows us to go the other direction for $M\geq 3$. Therefore we obtain~$(2)$.
\end{proof}

The following isomorphism is a direct result of this theorem.

\begin{Corollary}Suppose $M\geq 3$ and let
\begin{gather*}
\Fock = \mathcal P\big(\C^{m|2n}\big) / \big\langle R^2\big\rangle
\end{gather*}
be the polynomial Fock space defined in Definition~{\rm \ref{DefFock}}. We have $\Fock \cong F$.
\end{Corollary}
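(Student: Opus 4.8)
The plan is to prove the stronger statement that $F=\Fock$, i.e., that the cyclic vector $1$ generates all of $\Fock$ under $\rho$; the isomorphism is then tautological, and since $F=U(\mf g)1\bmod\big\langle R^2\big\rangle$ is by construction a $\mf g$-submodule of $\Fock$, the equality is automatically one of $\mf g$-modules. Everything reduces to a dimension count carried out degree by degree.

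First I would exploit the grading. The element $R^2$ is homogeneous of degree $2$, so $\big\langle R^2\big\rangle$ is a graded ideal and $\Fock=\bigoplus_{k\geq 0}\Fock_k$ inherits the polynomial grading, where $\Fock_k$ denotes the image of $\mc P_k\big(\C^{m|2n}\big)$. Because $M\geq 3$ forces $M\notin -2\N$, the Fischer decomposition of Proposition~\ref{LemFC} applies: writing $\mc P_k\big(\C^{m|2n}\big)=\bigoplus_{j\geq 0}R^{2j}\mc H_{k-2j}\big(\C^{m|2n}\big)$, all summands with $j\geq 1$ lie in $\big\langle R^2\big\rangle$, whence $\Fock_k\cong\mc H_k\big(\C^{m|2n}\big)$ and $\dim\Fock_k=\dim\mc H_k\big(\C^{m|2n}\big)$.

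Next I would place $F$ inside this grading. By Theorem~\ref{ThDecF}(3) the summand $F_k=\bigoplus_{l=0}^k z_0^l\mc H_{k-l}\big(\C^{m-1|2n}\big)$ consists of homogeneous elements of total degree $k$, hence $F_k\subseteq\Fock_k$. The dimension computation carried out just before Theorem~\ref{ThDecF}, together with Proposition~\ref{Prop dimension}, gives $\dim F_k=\dim\mc H_k\big(\C^{m|2n}\big)=\dim\Fock_k$. As both spaces are finite-dimensional and $F_k\subseteq\Fock_k$, equality $F_k=\Fock_k$ follows for every $k$. Summing over $k$ and using the $\mf k$-type decomposition $F=\bigoplus_k F_k$ from Theorem~\ref{ThDecF}(2) yields $F=\bigoplus_k\Fock_k=\Fock$.

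The argument is essentially bookkeeping, since the substantive work---irreducibility of each $F_k$, the explicit decomposition, and the key dimension identity $\dim F_k=\dim\mc H_k\big(\C^{m|2n}\big)$---is already contained in Theorem~\ref{ThDecF} and the discussion preceding it. The only point requiring a little care is that $F$, defined as $U(\mf g)1$, is not manifestly graded: the operators appearing in $\rho$ raise, lower, and preserve polynomial degree. I would therefore lean on Theorem~\ref{ThDecF} to supply the graded decomposition $F=\bigoplus_k F_k$, which is exactly what legitimises the degree-by-degree comparison against $\Fock=\bigoplus_k\Fock_k$.
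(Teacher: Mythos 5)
Your proposal is correct and follows essentially the same route as the paper: the corollary is pure bookkeeping from Theorem~\ref{ThDecF}, the Fischer decomposition $\Fock \cong \bigoplus_k \mc H_k\big(\C^{m|2n}\big)$, and the dimension identity $\dim F_k = \dim \mc H_k\big(\C^{m|2n}\big)$ established in the discussion just before that theorem. Your only refinement is to organise these facts as a containment $F_k \subseteq \Fock_k$ plus a finite-dimensional count in each degree, which sharpens the paper's abstract isomorphism to the literal equality $F = \Fock$ of $\mf g$-submodules.
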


\section{The Segal--Bargmann transform}\label{The Segal--Bargmann transform}
In this section we construct the Segal--Bargmann transform and show that it is an isomorphism from $W$ as defined in Section~\ref{SSSchrodRep} to $F$ as defined in Section~\ref{Section module F}. It will make use of the integral~$\int_W$ we defined in Definition~\ref{DefIntW}. This integral is only defined for $M\geq 4$. Therefore we will always assume $M\geq 4$ throughout this section.

\subsection{Definition and properties}

Let $\widetilde I_\alpha(t)$ be the I-Bessel function as introduced in Appendix~\ref{AppBes}. We define an entire function~$\B_\alpha$ on~$\C$ by
\begin{gather*}
\B_\alpha(t) := \Gamma\left(\dfrac{M}{2}-1\right)\widetilde I_{\frac{M}{2}-2+\alpha}\big(2\sqrt{t}\big) = \Gamma\left(\dfrac{M}{2}-1\right)\sum_{l=0}^\infty \dfrac{1}{l! \Gamma\big(l+\frac{M}{2}-1+\alpha\big)}t^l.
\end{gather*}
Clearly we have $\B_0(0)= 1$ and
\begin{gather*}
\pt t^j\B_0(t) = \Gamma\left(\dfrac{M}{2}-1\right)\pt t^j \left(\widetilde I_{\frac{M}{2}-2}\big(2\sqrt{t}\big)\right) = \Gamma\left(\dfrac{M}{2}-1\right)\widetilde I_{\frac{M}{2}-2+j}\big(2\sqrt{t}\big) = \B_{j}(t).
\end{gather*}
We are now able to state the Segal--Bargmann transform that extends the one from the bosonic case obtained in~\cite{HKMO}.
\begin{Definition}\label{DefSB}
For $f\in W$ the \textit{Segal--Bargmann transform} is defined as
\begin{gather*}
\SB f(z) := \exp(-z_0)\int_W \B_0(x|z)\exp(-2x_0)f(x),
\end{gather*}
where $x|z$ is defined as
\begin{gather*}
x|z := 2x_0z_0+2\sum_{i,j=1}^{m+2n-1}x_i\beta^{ij}z_j
\end{gather*}
and we view $\B_\alpha(x|z)$ as a radial superfunction in the sense of equation~\eqref{EqDefRadSup2}.
\end{Definition}

Note that $\B_0(4(x|z))$ is the reproducing kernel $\mathbb {K}(x,z)$ of the Fock space we found in Theo\-rem~\ref{Theorem repr kernel}.

\begin{Proposition}For $M\geq 4$ the Segal--Bargmann transform $\SB$ is well defined.
\end{Proposition}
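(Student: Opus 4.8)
To call $\SB$ \emph{well defined} three things must be checked: that for each fixed $z$ the integrand $x\mapsto \B_0(x|z)\exp(-2x_0)f(x)$ lies in the domain of the integral $\int_W$ of Definition~\ref{DefIntW} and that this integral converges; that the outcome is a genuine (super)function of $z$, so that after the factor $\exp(-z_0)$ it defines an element attached to the Fock side; and that the value is unchanged if $f$ is replaced by another representative modulo $\langle R^2_x\rangle$. My plan is to settle these by reducing everything to the explicit formula~\eqref{EqExplW}.

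By Theorem~\ref{ThDecW} the module $W$ is the direct sum $\bigoplus_j W_j$ with each $W_j$ finite-dimensional and spanned by terms $\Lambda_{2,j-k}^{M-3+2k,2l-1}(2|X|)\big(\mathcal H_k(\R^{m-1|2n})\otimes\mathcal H_l(\R)\big)$, each of which decays like a polynomial in $|X|$ times $\exp(-2|X|)$. By sesquilinearity it suffices to prove convergence for one such $f$. I would then expand $\B_0$ through its defining series $\Gamma(\tfrac M2-1)\sum_l\frac{1}{l!\,\Gamma(l+\frac M2-1)}(x|z)^l$. Since $x|z$ is linear in $x$, each power $(x|z)^l$, regarded as a radial superfunction in $x$ via~\eqref{EqDefRadSup2}, is a finite sum (the odd part of $x|z$ being nilpotent) of I-Bessel kernels $\B_p$ in the even variables times monomials in the odd ones, so the formula~\eqref{EqExplW} applies to each term.

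Inserting the $l$-th term into~\eqref{EqExplW} and restricting to $s=x_0=\rho$, the weight $\exp(-2x_0)f$ contributes $\exp(-4\rho)$ (using $|X|\big|_{s=x_0=\rho}=\rho$) while $(x|z)^l$ and the action of $\phi^\sharp$ contribute only polynomial factors in $\rho$; hence the $\rho$-integral converges absolutely at infinity. Near $\rho=0$ the dangerous contributions are the finitely many negative powers $\rho^{m-3-2j}$ produced by $(1+\eta)^{m-3}$ and by $\phi^\sharp$ with $j\le n$, and integrability demands $m-3-2j>-1$, i.e.\ $2j<m-2$; this holds for every $j\le n$ exactly because $M=m-2n\ge 4$ forces $m-2>2n$. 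This is precisely where the hypothesis $M\ge4$ enters. To sum over $l$ I would use the moment bound $\int_0^\infty\rho^{m-3+l}\exp(-4\rho)\,\mathrm d\rho=(m-3+l)!\,4^{-(m-2+l)}$ together with the coefficient $\frac{1}{l!\,\Gamma(l+\frac M2-1)}$: the extra factor $\Gamma(l+\frac M2-1)^{-1}$ makes the resulting coefficients in $z$ decay super-exponentially, so the series is dominated by an everywhere convergent power series, which both justifies interchanging $\sum_l$ with $\int_W$ and shows that $\int_W[\B_0(x|z)\exp(-2x_0)f]$ is a well-defined entire superfunction of $z$. Multiplication by $\exp(-z_0)$ preserves this, so $\SB f$ is a well-defined superfunction.

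For independence of the representative, if $f=R^2_x h\in\langle R^2_x\rangle$ then, $R^2_x$ being a central multiplication operator, $\B_0(x|z)\exp(-2x_0)f=R^2_x\big(\B_0(x|z)\exp(-2x_0)h\big)$ again lies in $\langle R^2_x\rangle$, on which $\int_W$ vanishes by the remark following Definition~\ref{DefIntW}; hence $\SB f$ depends only on the class of $f$ in $W$. That the entire superfunction $\SB f$ in fact represents an element of the polynomial Fock space $\Fock=\mathcal P(\C^{m|2n})/\langle R^2_z\rangle$ is the content of the later isomorphism statement (Theorem~\ref{ThIP}), the mechanism being that the integral equals $\exp(z_0)$ times a superpolynomial which the prefactor $\exp(-z_0)$ then unmasks; for mere well-definedness it is enough that $\SB f$ is a convergent, representative-independent superfunction. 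I expect the main obstacle to be the uniform estimate needed to exchange summation and integration, namely balancing the I-Bessel growth $\widetilde I_{\frac M2-2}(2\sqrt{x|z})\sim\exp(2\sqrt{x|z})$ against the $\exp(-4\rho)$ decay while simultaneously controlling the $\rho\to0$ behaviour — the step that consumes the assumption $M\ge4$.
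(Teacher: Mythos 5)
Your overall strategy (reduce everything to the explicit formula~\eqref{EqExplW}, expand the odd variables, and check convergence separately as $\rho\to\infty$ and $\rho\to 0$) is the same as the paper's, and your treatment of the behaviour at infinity and of the vanishing on $\big\langle R^2\big\rangle$ is fine. The genuine gap is your description of the elements of $W$. You assert that each constituent of $W_j$ ``decays like a polynomial in $|X|$ times $\exp(-2|X|)$'' and consequently that the only negative powers of $\rho$ near the origin come from the weight $(1+\eta)^{m-3}(1+\xi)^{-1}$ and from $\phi^\sharp$. This is not correct. The Laguerre functions $\Lambda_{2,j}^{\mu,\nu}$ are built from K-Bessel functions $\widetilde K_{\nu/2}$, and the decomposition the paper imports from the proof of Theorem~8.13 of~\cite{BF} writes a general element of $W$ as a sum of terms $P_k\,\widetilde K_{-\frac{1}{2}+\alpha_1+\alpha_2}(2|X|)$ with $\alpha_1,\alpha_2\in\N$ subject to $k\geq\alpha_1+2\alpha_2$ and $M\geq 2\alpha_1+2$. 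For $\alpha_1+\alpha_2>0$ the factor $\widetilde K_{-\frac{1}{2}+\alpha_1+\alpha_2}(2\rho)$ blows up like $\rho^{1-2(\alpha_1+\alpha_2)}$ as $\rho\to 0$. Moreover, these factors must be read as radial superfunctions via~\eqref{EqDefRadSup2}, and the expansion $\phi^\sharp\big(\widetilde K_\alpha(c|X|)\big)=\sum_j\frac{(-1)^jc^{2j}\theta^{2j}}{j!8^j}\widetilde K_{\alpha+j}(c\rho)$ produces higher-index K-Bessel functions with still worse singularities; this already bites for the Gaussian-type generator, whose $\theta$-corrections have coefficients $\widetilde K_{-\frac{1}{2}+j}(2\rho)\sim C\rho^{1-2j}$. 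So the integrand contains singular factors $\widetilde K_{-\frac{1}{2}+j_4}(2\rho)\,\widetilde K_{-\frac{1}{2}+\alpha_1+\alpha_2+j_5}(2\rho)$ that your bookkeeping never sees, because your premise about $W$ erases the indices $k$, $\alpha_1$, $\alpha_2$ entirely.

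As a result, your integrability criterion ``$m-3-2j>-1$ for all $j\leq n$'' is not the right one. The correct condition, which the paper derives from the asymptotics of $\int_0^\infty\rho^{\sigma-1}\widetilde I_{\beta_1}\big(\sqrt{a\rho}\big)\widetilde K_{\beta_2}(2\rho)\widetilde K_{\beta_3}(2\rho)\,\mathrm{d}\rho$, is
\begin{gather*}
m-2+k-j_1+l_3-2l_3l_2-2j_2-2j_3 > 2\max\left\{-\tfrac{1}{2}+j_4,0\right\}+2\max\left\{-\tfrac{1}{2}+\alpha_1+\alpha_2+j_5,0\right\},
\end{gather*}
constrained by the Berezin integral forcing the total odd degree to equal $2n$, and it reduces to $M>2$ only after invoking the inequalities $k\geq\alpha_1+2\alpha_2$ and $M\geq 2\alpha_1+2$ from~\cite{BF} --- exactly the data your description of $W$ discards; without them the origin need not be integrable. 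Relatedly, your claim that the inequality $m-2>2n$ ``is precisely where the hypothesis $M\geq 4$ enters'' is misplaced: the convergence estimate itself only requires $M>2$, while $M\geq 4$ is needed because the integral $\int_W$ of Definition~\ref{DefIntW} and the structural facts about $W$ used above are only available in that range. To repair your argument you would have to replace your structural claim about $W$ by the $P_k\,\widetilde K_{-\frac{1}{2}+\alpha_1+\alpha_2}(2|X|)$ decomposition and redo the $\rho\to 0$ analysis with the K-Bessel singularities included.
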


\begin{proof}We wish to prove that the integral
\begin{gather*}
\int_W \B_0(x|z)\exp(-2x_0)f(x)
\end{gather*}
is convergent for all $f\in W$ and that $\SB(R^2)=0$. As shown in the proof of Theorem~8.13 in~\cite{BF}, the elements of $W$ can be decomposed into elements of the form
\begin{gather*}
P_k \widetilde{K}_{-\frac{1}{2}+\alpha_1+\alpha_2}(2|X|),
\end{gather*}
with $P_k$ a homogeneous polynomial of degree $k$. Here $\widetilde K_\alpha$ is the K-Bessel function introduced in Appendix~\ref{AppBes} interpreted as a radial superfunction as in Section~\ref{SSRadSub}. Furthermore $\alpha_1, \alpha_2\in \N$ are subject to the relations $k \geq \alpha_1+2\alpha_2$ and $M\geq 2\alpha_1+2$. Also, observe that
\begin{gather}\label{EqXx_0}
|X| = \sqrt{\dfrac{x_0^2 + r^2}{2}} = \sqrt{x_0^2+\dfrac{R^2}{2}} = \sqrt{x_0^2}\quad \mod \big\langle R^2\big\rangle ,
\end{gather}
which is equal to $x_0$ within the domain of integration of $\int_W$. Because of all this and equation~\eqref{Eq Bessel is exp} it suffices to prove
\begin{gather*}
\int_W P_k\B_0(x|z)\widetilde{K}_{-\frac{1}{2}}(2|X|)\widetilde{K}_{-\frac{1}{2}+\alpha_1+\alpha_2}(2|X|)
\end{gather*}
is convergent for all $k\in \N$ and $\alpha_1, \alpha_2 \in \N$ subject to the above mentioned relations. We will use the explicit description of $\int_W$ given in equation~\eqref{EqExplW}. The morphism $\phi^\sharp$ leaves the degree of a polynomial unchanged. Hence, we can expand
\begin{gather*}
\big(\phi^\sharp(P_k)\big)_{|s=x_0=\rho} = \sum_{j=0}^k\rho^{k-j}a_j(\theta)b_j(\omega),
\end{gather*}
where $a_j(\theta)$ is a polynomial in $\mc P\big(\R^{0|2n}\big)$ of degree $j$ and $b_j(\omega)$ is a function depending on the spherical coordinates $\omega$. For $c\in \mathbb Z$ we obtain
\begin{gather*}
(1+\eta)^{c}= \sum_{j=0}^n \dfrac{1}{j!}\left(\dfrac{-c}{2}\right)_j\dfrac{\theta^{2j}}{2^j s^{2j}}, \qquad (1+\xi)^{c} = \sum_{j=0}^n \dfrac{(-1)^j}{j!}\left(\dfrac{-c}{2}\right)_j\dfrac{\theta^{2j}}{2^j x_0^{2j}}
\end{gather*}
and
\begin{gather*}
\phi^\sharp\big(\widetilde{K}_{\alpha}(c|X|)\big) = \widetilde{K}_{\alpha}(c|X|) = \sum_{j=0}^n \dfrac{(-1)^jc^{2j}\theta^{2j}}{j!8^j}\widetilde{K}_{\alpha+j}(c\rho),
\end{gather*}
from the proof of Lemma~8.6 in~\cite{BF}. Here $\eta$, $\xi$ and $\theta^2$ were defined in Section~\ref{SSIntC}. We introduce the notations
\begin{gather*}
\omega|z := 2\sum_{i=1}^{m-1}\omega_i z_i \qquad \mbox{and} \qquad \theta|z := 2 \sum_{i,j=m}^{m+2n-1} x_i\beta^{ij}z_j.
\end{gather*}
We can use equation~\eqref{EqDefRadSup2} with $h = \B_0$ and $f= x|z$ as a function in the $x$ variables to obtain
\begin{gather*}
\B_0(x|z) = \B_0(2 x_0 z_0 + s\omega|z + \theta|z) = \sum_{j=0}^{2n}\dfrac{1}{j!}(\theta|z)^j \B_j(2 x_0 z_0 + s\omega|z).
\end{gather*}
Using the properties of $\phi^\sharp$ described in Lemma~8.3 of~\cite{BF} and the expansion of $(1+\xi)$ and $(1+\eta)$, we now find
\begin{gather*}
\phi^\sharp(\B_0(x|z))_{|s=x_0=\rho} = \sum_{l_1=0}^{2n}\dfrac{1}{l_1!}(\theta|z)^{l_1} \B_{l_1}(2\rho(1+\xi)z_0 + \rho(1+\eta)\omega|z)_{|s=x_0=\rho}\\
\hphantom{\phi^\sharp(\B_0(x|z))_{|s=x_0=\rho}}{}
= \sum_{l_1=0}^{2n}\dfrac{1}{l_1!}(\theta|z)^{l_1} \B_{l_1}\left(\rho\sum_{l_2=0}^n\dfrac{1}{l_2!}\left(-\dfrac{1}{2}\right)_{l_2}\dfrac{\theta^{2l_2}}{2^{l_2}\rho^{2l_2}}(2(-1)^{l_2} z_0+ \omega|z)\right).
\end{gather*}
We use equation~\eqref{EqDefRadSup2} again, this time with $h=\B_{l_1}$ and $f$ equal to the sum over $l_2$. Note that the $l_2=0$ term corresponds with $f_0$. We obtain
\begin{gather*}
\phi^\sharp(\B_0(x|z))_{|s=x_0=\rho}= \sum_{l_1,l_3=0}^{2n}\dfrac{1}{l_1!l_3!}(\theta|z)^{l_1}\left(\rho\sum_{l_2=1}^n\dfrac{1}{l_2!}\left(-\dfrac{1}{2}\right)_{l_2}\dfrac{\theta^{2l_2}}{2^{l_2}\rho^{2l_2}}(2(-1)^{l_2} z_0+ \omega|z)\right)^{l_3}\\
 \hphantom{\phi^\sharp(\B_0(x|z))_{|s=x_0=\rho}=}{} \times \B_{l_1+l_3}(\rho(2z_0 +\omega|z)).
\end{gather*}

Combining all this, we see that
\begin{gather*}
 \dfrac{1}{\gamma} \int_0^\infty\int_{\mathbb S^{m-2}}\int_B\rho^{m-3}(1+\eta)^{m-3}(1+\xi)^{-1}\\
 \qquad{}\times \phi^\sharp(P_k\B_0(x|z)\widetilde{K}_{-\frac{1}{2}}(2|X|)\widetilde{K}_{-\frac{1}{2}+\alpha_1+\alpha_2}(|X|))_{|s=x_0=\rho}\mathrm{d}\rho \mathrm{d}\omega,
\end{gather*}
converges if
\begin{gather*}
 \int_0^\infty\int_B \rho^{m-3}\sum_{j_1=0}^k\sum_{j_2, j_3, j_4, j_5=0}^n\sum_{l_1,l_3=0}^{2n} \dfrac{1}{j_2!}\left(\dfrac{3-m}{2}\right)_{j_2}\dfrac{\theta^{2j_2}}{2^{j_2} \rho^{2{j_2}}} \dfrac{(-1)^{j_3}}{j_3!}\left(\dfrac{1}{2}\right)_{j_3}\dfrac{\theta^{2{j_3}}}{2^{j_3} \rho^{2{j_3}}}\\
 \qquad{} \times\rho^{k-j_1}a_{j_1}(\theta) \dfrac{1}{l_1!l_3!}(\theta|z)^{l_1} \left(\rho\sum_{l_2=1}^n\dfrac{1}{l_2!}\left(-\dfrac{1}{2}\right)_{l_2}\dfrac{\theta^{2l_2}}{2^{l_2}\rho^{2l_2}}c_1\right)^{l_3} \B_{l_1+l_3}(c_2\rho) \\
 \qquad {} \times\dfrac{(-1)^{j_4}\theta^{2{j_4}}}{{j_4}!2^{j_4}}\widetilde{K}_{-\frac{1}{2}+{j_4}}(2\rho)\dfrac{(-1)^{j_5}\theta^{2{j_5}}}{{j_5}!2^{j_5}}\widetilde{K}_{-\frac{1}{2}+\alpha_1+\alpha_2+j_5}(2\rho)\mathrm{d}\rho
\end{gather*}
converges for all $c_1, c_2\in\C$. This in turn converges if
\begin{gather*}
 \int_0^\infty\int_B \rho^{m-3+k-j_1+l_3-2l_3l_2-2j_2-2j_3}a_{j_1}(\theta)(\theta|z)^{l_1}\theta^{2j_2+2j_3+2j_4+2j_5+2l_3l_2} \B_{l_1+l_3}(c\rho)\\
 \qquad \times \widetilde{K}_{-\frac{1}{2}+{j_4}}(2\rho)\widetilde{K}_{-\frac{1}{2}+\alpha_1+\alpha_2+j_5}(2\rho)\mathrm{d}\rho
\end{gather*}
converges for all $0\leq j_1 \leq k$, $0\leq j_2, j_3, j_4, j_5 \leq n$, $0\leq l_1, l_3 \leq 2n$, $1\leq l_2 \leq n$ and all $c\in \C$. The Berezin integral is zero unless $j_1 + l_1 + 2j_2+2j_3+2j_4+2j_5+2l_3l_2 =2n$. The integral
\begin{gather*}
\int_0^\infty \rho^{\sigma -1}\widetilde I_{\beta_1}\big(\sqrt{a\rho}\big)\widetilde{K}_{\beta_2}(2\rho)\widetilde{K}_{\beta_3}(2\rho)\mathrm{d}\rho,
\end{gather*}
with $\beta_1 \geq 0$ converges if $\sigma > 2 \max\{\beta_2, 0\}+ 2 \max\{\beta_3, 0\}$. This follows from the asymptotic behaviour of the Bessel functions, see Appendix~\ref{AppBes}. Therefore we get the following condition
\begin{gather*}
m-2+k-j_1+l_3-2l_3l_2-2j_2-2j_3 \\
\qquad {}> 2 \max\left\lbrace -\frac{1}{2}+{j_4}, 0\right\rbrace
 + 2 \max\left\lbrace -\frac{1}{2}+\alpha_1+\alpha_2+j_5, 0\right\rbrace,
\end{gather*}
with $j_1 + l_1 + 2j_2+2j_3+2j_4+2j_5+2l_3l_2 =2n$. Taking into account $k\geq \alpha_1+2\alpha_2$ and $M\geq 2\alpha_1+2$ the condition reduces to $M > 2$. We still need that $\SB\big(R^2\big)=0$, but this follows easily from $\big(\phi^\sharp\big(R^2\big)\big)_{|s=x_0=\rho} = \big({-}x_0^2+s^2\big)_{|s=x_0=\rho}=0$.
\end{proof}

We can now show that $\SB$ intertwines the Schr\" odinger model with the Fock model.

\begin{Theorem}[intertwining property]\label{ThIP}
For $M\geq 4$ the Segal--Bargmann transform intertwines the action $\pi$ on $W$ with the action $\rho$ on $F$, i.e.,
\begin{gather*}
\SB\circ \pi(X) = \rho(X)\circ \SB,
\end{gather*}
for all $X\in \mathfrak{g}$.
\end{Theorem}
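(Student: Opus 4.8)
The plan is to leverage that both $\pi$ and $\rho$ are representations of $\mf g=\TKK(J)$ and that $\SB\colon W\to F$ is a well-defined linear map (by the previous proposition), so that the set $S:=\big\{X\in\mf g\colon \SB\circ\pi(X)=\rho(X)\circ\SB\big\}$ is a linear subspace of $\mf g$ which is moreover closed under the Lie superbracket. Indeed, if $X,Y\in S$ then, since $\pi([X,Y])=\pi(X)\pi(Y)-(-1)^{|X||Y|}\pi(Y)\pi(X)$ and likewise for $\rho$, one gets $\SB\circ\pi([X,Y])=\rho([X,Y])\circ\SB$ by sliding $\SB$ through each factor. Hence it suffices to verify the intertwining identity on a generating set of $\mf g$. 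Since $J^+$ and $J^-$ together generate $\TKK(J)$ — for instance $[e_0^+,e_l^-]=2L_{e_l}$ recovers the multiplication part of the structure algebra, whose brackets then give the inner derivations — I would take as generators the elements $e_l^-\in J^-$ and $e_l^+\in J^+$, for which $\pi(e_l^-)=-2\imath x_l$ is a multiplication operator and $\pi(e_l^+)=-\tfrac{\imath}{2}\mbessel(x_l)$ is a Bessel operator.

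For a fixed generator $X$ the task is to move $\pi(X)$, which acts on the integrand $x\mapsto f(x)$ in $\SB f(z)=\exp(-z_0)\int_W\B_0(x|z)\exp(-2x_0)f(x)$, onto the kernel $K(x,z):=\B_0(x|z)\exp(-2x_0)$, and then to recognise the resulting operator in $x$ applied to $K$ as the operator $\rho(X)$ in $z$ applied to $\exp(z_0)\SB f$, which can be pulled out of the $x$-integral. To transfer $\pi(X)$ onto $K$ I would use the skew-supersymmetry of $\pi$ with respect to $\int_W$ (Proposition~\ref{PropSkewSymPi}), being mindful of the complex conjugation built into $\ip{\cdot\,,\cdot}$: writing $\int_W(\pi(X)f)K=\ip{\pi(X)f,\overline K}$ and applying the skew-symmetry lands one on $\int_W f\,(\overline{\pi(X)}\,K)$ up to a sign, where $\overline{\pi(X)}$ denotes the operator with conjugated coefficients. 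The computation of $\overline{\pi(X)}K$ — that is, of $x_l K$ and of $\mbessel(x_l)K$ — rests on the chain rule for the radial superfunction $\B_0(x|z)$ of \eqref{EqDefRadSup2} together with the derivative property $\pt t^j\B_0=\B_j$, which produce, schematically, $\pt{x^l}\B_0(x|z)=2z^l\B_1(x|z)$ (with a sign flip at $l=0$ reflecting $\mbessel(x_0)=-\bessel(x_0)$) and, on a second application, a $\Delta_x\B_0(x|z)$ proportional to $R^2_z$. The ensuing combinations of $\B_1$, $\B_2$ and the argument $x|z$ are then collapsed by the recurrence relations for the I-Bessel function collected in the appendix, while the factors $\exp(-2x_0)$ and $\exp(-z_0)$ supply exactly the first-order and multiplicative terms needed to reassemble $\rho(X)$.

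I expect the genuine difficulty to lie with the $J^+$ generators, i.e., with the second-order Bessel operators $\mbessel(x_l)$. Transferring $\mbessel(x_l)$ onto $K$ and re-expressing the outcome requires the full second-order chain-rule expansion of $\B_0(x|z)$ and a delicate cancellation of the terms proportional to $R^2$ — whether in the integration variable, where they are annihilated by $\int_W$, or in the output variable, where they vanish in $F$ — all of which must nonetheless be tracked faithfully. Moreover every step has to be carried out inside the explicit description \eqref{EqExplW} of $\int_W$, with its passage to spherical coordinates, the algebra morphism $\phi^\sharp$ and the Berezin integral over the odd variables, while reducing modulo $\big\langle R^2\big\rangle$; one must in addition check that the integration by parts produces no boundary contributions and that every integral that appears converges, exactly as in the preceding well-definedness argument. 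Keeping the signs, the parities and the shifted Bessel indices consistent through this bookkeeping is where the length and technicality concentrate, which is presumably why the argument is deferred to Appendix~\ref{AppIP}; once the kernel identities for $x_l K$ and $\mbessel(x_l)K$ are in hand, the remaining generators follow automatically from the bracket-closure argument of the first paragraph.
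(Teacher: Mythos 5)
Your proposal is correct in strategy and shares the computational core of the paper's proof (transfer the operators onto the kernel $\B_0(x|z)\exp(-2x_0)$ under $\int_W$, compute kernel identities, reassemble $\rho(X)$ outside the integral), but it is organised differently. The paper does not use a generation argument at all: it verifies the identity case by case over the full decomposition $\mf g=J^-\oplus\mf{istr}(J)\oplus J^+$ (seven cases), whereas you reduce to the generators $e_l^\pm\in J^\pm$ via closure of the intertwining relation under the superbracket, recovering $\mf{istr}(J)$ from $[e_0^+,e_l^-]=2L_{e_l}$; this is a legitimate and genuinely shorter route, eliminating the paper's Cases 3--5, and since $\SB$ is an even linear map the sign bookkeeping in $\SB\circ\pi([X,Y])=\rho([X,Y])\circ\SB$ works out as you say. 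What your plan misses is the key simplification that makes the paper's $J^+$ cases tractable: instead of a second-order chain-rule expansion producing $\B_1$, $\B_2$ and I-Bessel recurrences (which, incidentally, are not collected in Appendix~\ref{AppBes}), the paper proves in Lemma~\ref{Properties of B_0} the exact eigenfunction identity $\mbessel(x_i)\B_0(x|z)=4z_i\B_0(x|z)$ of equation~\eqref{EqB0I} by a direct power-series computation (the same one as for the reproducing kernel, with the $R^2$-proportional remainder discarded exactly as you describe), and then the product rule of Proposition~\ref{PropProdRule} together with the elementary identities for $\exp(-z_0)$ and $\exp(-2x_0)$ does the rest; your anticipated ``delicate cancellation'' thus collapses into one clean lemma. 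One caveat applies equally to your proposal and to the paper: invoking Proposition~\ref{PropSkewSymPi} (or the Bessel-supersymmetry of $\int_W$) with the kernel as one argument is formally an application of these statements outside $W$, since $\B_0(x|z)\exp(-2x_0)$ is a power series rather than an element of $W$; the paper silently accepts this (Case~5 and Cases~6--7 of Appendix~\ref{AppIP}), so it is not a gap relative to the paper's standard of rigor, but in a fully self-contained write-up you would want to justify the integration by parts against such kernels, for example by the convergence analysis already carried out in the well-definedness proposition.
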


\begin{proof}The proof of this theorem is a technical and long but rather straightforward calculation. We refer to Appendix \ref{AppIP} for more details.
\end{proof}

\begin{Proposition} \label{Prop inverse SB transform}
The Segal--Bargmann transform $\SB$ induces a $\mf g$-module isomorphism between~$W$ and~$F$.
\end{Proposition}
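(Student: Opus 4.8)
The plan is to combine the intertwining property (Theorem~\ref{ThIP}) with a careful comparison of the module structures of $W$ and $F$. The intertwining relation $\SB\circ\pi(X) = \rho(X)\circ\SB$ already tells us that $\SB$ is a $\mf g$-module homomorphism; the remaining work is to show that it is a \emph{bijection}. First I would verify that $\SB$ actually maps $W$ into $F$. Since $\SB$ intertwines the two actions and $W = U(\mf g)\exp(-2|X|)$, it suffices to understand the image of the cyclic generator: I would compute $\SB(\exp(-2|X|))$ explicitly and check that it lands in $F$ (one expects, by the reproducing-kernel remark following Definition~\ref{DefSB}, that this image is a nonzero constant or another natural generator). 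Once one generator maps into $F$, the intertwining property propagates the whole of $U(\mf g)\exp(-2|X|) = W$ into $U(\mf g)\cdot\SB(\exp(-2|X|)) \subseteq F$.

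Next I would establish \textbf{surjectivity}. Because $F = U(\mf g)1$ is an irreducible $\mf g$-module (Theorem~\ref{ThDecF}(2)), any nonzero $\mf g$-submodule of $F$ equals $F$. The image $\SB(W)$ is a $\mf g$-submodule of $F$ by the intertwining property, so as soon as I confirm that $\SB$ is not identically zero --- equivalently that $\SB(\exp(-2|X|))\neq 0$ --- irreducibility of $F$ forces $\SB(W) = F$. Thus surjectivity reduces entirely to the nonvanishing of a single explicit integral, which the well-definedness proof of $\SB$ and the reproducing-kernel computation make accessible.

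For \textbf{injectivity} I would invoke irreducibility on the other side: by Theorem~\ref{ThDecW}(2), $W$ is a simple $\mf g$-module, so the kernel of the $\mf g$-homomorphism $\SB$ is either $0$ or all of $W$. Since $\SB$ is not the zero map (again by $\SB(\exp(-2|X|))\neq 0$), its kernel is $0$ and $\SB$ is injective. Combining this with surjectivity yields that $\SB\colon W\to F$ is a $\mf g$-module isomorphism.

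\textbf{The main obstacle} I anticipate is the first step: confirming that $\SB$ genuinely sends $W$ into $F$ and, in particular, pinning down $\SB(\exp(-2|X|))$ and checking it is nonzero. The convergence analysis already done for well-definedness handles the analytic side, but one must still carry out the radial-superfunction integral against the kernel $\B_0(x|z)$ to identify the image concretely; this is where the special-function identities for the I- and K-Bessel functions (Appendix~\ref{AppBes}) and the explicit form of $\int_W$ from Definition~\ref{DefIntW} enter. Once that single computation is secured, the two irreducibility statements (simplicity of $W$ and of $F$) make the isomorphism claim essentially automatic, so the proof is short modulo that explicit evaluation.
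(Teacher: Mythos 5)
Your proposal is correct and takes essentially the same route as the paper: a Schur-type argument combining the intertwining property (Theorem~\ref{ThIP}), the nonvanishing of $\SB$ on the cyclic generator $\exp(-2|X|)$, and the irreducibility of $W$ and $F$ (Theorems~\ref{ThDecW} and~\ref{ThDecF}) to get injectivity and surjectivity simultaneously. The only difference is that the paper dispatches your anticipated ``main obstacle'' trivially: it does not compute $\SB(\exp(-2|X|))(z)$ for general $z$ (that is done only afterwards, in Lemma~\ref{Lemma SB constant}, using this very proposition), but merely evaluates at $z=0$, where $\B_0(0)=1$ and the normalisation $\int_W \exp(-4|X|)=1$ together with equation~(\ref{EqXx_0}) immediately give $\SB(\exp(-2|X|))(0)=1\neq 0$, with no Bessel-function integration required.
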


\begin{proof}From the way we normalized the integral $\int_W$ and equation~(\ref{EqXx_0}) it is clear that
\begin{gather*}
\SB (\exp(-2|X|))(0) = \int_W \exp(-4|X|) = 1.
\end{gather*}
Therefore the Segal--Bargmann transform maps a non-zero element of $W$ to a non-zero element of~$F$. It also intertwines the actions $\pi$ and $\rho$. Since Theorems~\ref{ThDecW} and~\ref{ThDecF} give us that~$W$ and~$F$ are irreducible $\mf g$-modules, we conclude that $\SB$ is an isomorphism of $\mf g$-modules.
\end{proof}

\begin{Lemma}\label{Lemma SB constant}
We have $\SB(\exp(-2|X|)(z) = 1$.
\end{Lemma}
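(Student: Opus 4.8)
The plan is to avoid computing the integral directly and instead exploit the intertwining property (Theorem~\ref{ThIP}) together with the normalization $\SB(\exp(-2|X|))(0)=\int_W\exp(-4|X|)=1$ already obtained in the proof of Proposition~\ref{Prop inverse SB transform}. The guiding observation is that $\exp(-2|X|)$ should be an eigenvector of a distinguished element of $\mathfrak k\subset\mathfrak g$ which, transported to the Fock side, is essentially the Euler operator. If so, its image $g:=\SB(\exp(-2|X|))$ is killed by $\E_z$, hence lies in $F_0\cong\mc H_0(\C^{m|2n})=\C$, and the known constant term then forces $g=1$.

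Concretely, I would single out the element $(e_0,0,-e_0)\in\mf k$. From the explicit list for $\rho$ one reads off $\rho(e_0,0,-e_0)=\rho(e_0,0,0)-\rho(0,0,e_0)=-\imath(M-2+2\E_z)$, while from the list for $\pi$ (using $\mbessel(x_0)=-\bessel(x_0)$) one gets $\pi(e_0,0,-e_0)=-2\imath x_0-\tfrac{\imath}{2}\bessel(x_0)$. The central computational step is then to establish the eigenvalue identity $\pi(e_0,0,-e_0)\exp(-2|X|)=-\imath(M-2)\exp(-2|X|)$. Since $\bessel(x_0)$ is tangential to $\langle R^2\rangle$ (Proposition~\ref{PropTangBes}), it descends to the quotient, so I may work modulo $\langle R^2\rangle$ where $|X|\equiv x_0$ by equation~\eqref{EqXx_0} and represent $\exp(-2|X|)$ by $\exp(-2x_0)$. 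Evaluating the three constituents of $\bessel(x_0)=(M-2+2\E)\pt0-x_0\Delta$ on $\exp(-2x_0)$, namely $\pt0\exp(-2x_0)=2\exp(-2x_0)$, $\E\exp(-2x_0)=-2x_0\exp(-2x_0)$ and $\Delta\exp(-2x_0)=-4\exp(-2x_0)$, yields $\bessel(x_0)\exp(-2x_0)=2(M-2)\exp(-2x_0)-4x_0\exp(-2x_0)$, and substituting this into $\pi(e_0,0,-e_0)$ produces exactly the scalar $-\imath(M-2)$, the $x_0$-terms cancelling.

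With the eigenvalue identity in hand, Theorem~\ref{ThIP} gives $\rho(e_0,0,-e_0)g=\SB\big(\pi(e_0,0,-e_0)\exp(-2|X|)\big)=-\imath(M-2)g$; comparing with $\rho(e_0,0,-e_0)=-\imath(M-2+2\E_z)$ forces $\E_z g=0$. By the Fischer decomposition underlying Theorem~\ref{ThDecF} (for $M\geq 4$ we have $M\notin -2\N$), the kernel of $\E_z$ in $\Fock$ is $F_0=\C$, so $g$ is a constant; its constant term is $\SB(\exp(-2|X|))(0)=1$, whence $g=1$. The main obstacle is the eigenvalue computation: one must justify replacing $\exp(-2|X|)$ by its representative $\exp(-2x_0)$ modulo $\langle R^2\rangle$ (licensed by tangentiality) and then carefully track the non-commuting factors $(M-2+2\E)$ and $\pt0$ in the correct order. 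Everything else is routine bookkeeping with the explicit formulas for $\pi$ and $\rho$.
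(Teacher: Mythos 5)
Your argument is correct, but it reaches the key step by a genuinely different route than the paper. The paper's proof is purely structural: $\exp(-2|X|)$ lies in $W_0$, and since Proposition~\ref{Prop inverse SB transform} makes $\SB$ a $\mf g$-module (hence $\mf k$-module) isomorphism, it must carry $W_0$ into the lowest $\mf k$-type $F_0=\C$; the normalization $\SB(\exp(-2|X|))(0)=1$ then finishes. You instead pin the image down by an explicit computation with the central element $(e_0,0,-e_0)$ of $\mf k$: your eigenvalue identity $\pi(e_0,0,-e_0)\exp(-2|X|)=-\imath(M-2)\exp(-2|X|)$ is correct (with $\beta_{00}=-1$ one indeed has $\pt{0}\exp(-2x_0)=2\exp(-2x_0)$, $\E\exp(-2x_0)=-2x_0\exp(-2x_0)$, $\Delta\exp(-2x_0)=-4\exp(-2x_0)$, and the $x_0$-terms cancel as you say), the replacement of $\exp(-2|X|)$ by the representative $\exp(-2x_0)$ modulo $\langle R^2\rangle$ on $x_0>0$ is exactly how the paper itself uses equation~\eqref{EqXx_0}, and Theorem~\ref{ThIP} together with $\rho(e_0,0,-e_0)=-\imath(M-2+2\E_z)$ then forces $\E_z\,\SB(\exp(-2|X|))=0$. (Your value of $\rho(e_0,0,-e_0)$, read off from the explicit list, is the consistent one; the factor $-2\imath$ written in the proof of Proposition~\ref{PropSkewSymRho} for this element is a harmless slip in the paper.) What your route buys is a concrete justification of the point the paper's one-line proof leaves implicit, namely why the isomorphism must send $W_0$ to $F_0$ rather than to some $F_k$ with $k>0$: the $\mf k$-central character, i.e., the $\E_z$-eigenvalue, singles out $F_0$. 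What it costs is one prerequisite you use without stating it: to pass from $\E_z g=0$ to ``$g$ is constant'' via the grading of $\Fock$, you must first know that $g:=\SB(\exp(-2|X|))$ lies in $\Fock$ (or at least is an entire superfunction, where the kernel of $\E_z$ is also $\C$); within the paper's toolkit this membership comes from Proposition~\ref{Prop inverse SB transform}, which you cite only for the normalization at $z=0$, so you should invoke it for this as well. With that one sentence added, your proof is complete, and it is arguably more self-contained than the paper's, at the price of the explicit Bessel-operator computation.
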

\begin{proof}
Since $\exp(-2|X|)$ is in $W_0$ Proposition \ref{Prop inverse SB transform} implies that $\SB (\exp(-2|X|))(z)$ is in $F_0=\C$. Hence $\SB (\exp(-2|X|))(z)$ is a constant. From the way we normalized the integral $\int_W$ we have $\SB (\exp(-2|X|))(0) = 1$ and therefore $\SB (\exp(-2|X|))(z) = 1$.
\end{proof}

\begin{Theorem}[unitary property]\label{PropUnitSB}
For $M\geq 4$ the Segal--Bargmann transform preserves the sesquilinear forms, i.e.,
\begin{gather*}
\bfip{\SB f,\SB g} = \ip{f, g},
\end{gather*}
for all $f,g\in W$.
\end{Theorem}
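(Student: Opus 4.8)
The plan is to recognise both sides as $\mf g$-invariant sesquilinear forms on the irreducible module $W$, to conclude by Schur's lemma that they are proportional, and to fix the proportionality constant on the cyclic vector $\exp(-2|X|)$.

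First I would introduce the pulled-back form $b(f,g) := \bfip{\SB f, \SB g}$ for $f,g\in W$, which is sesquilinear since $\SB$ is linear and $\bfip{\cdot\,,\cdot}$ is sesquilinear. The point is that $b$ is skew-supersymmetric with respect to $\pi$. Indeed, using the intertwining property (Theorem~\ref{ThIP}) and then the skew-supersymmetry of $\rho$ (Proposition~\ref{PropSkewSymRho}), and recalling that $\SB$ is even so that $|\SB f| = |f|$,
\begin{align*}
b(\pi(X)f, g) &= \bfip{\SB\pi(X)f, \SB g} = \bfip{\rho(X)\SB f, \SB g}\\
&= -(-1)^{|X||f|}\bfip{\SB f, \rho(X)\SB g} = -(-1)^{|X||f|}b(f, \pi(X)g),
\end{align*}
for all $X\in\mf g$, where the last equality again uses Theorem~\ref{ThIP}. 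Thus $b$ and $\ip{\cdot\,,\cdot}$ (Proposition~\ref{PropSkewSymPi}) are both $\mf g$-invariant sesquilinear forms on $W$.

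Next I would exploit the irreducibility of $W$ (Theorem~\ref{ThDecW}$(2)$). Since $\ip{\cdot\,,\cdot}$ is non-degenerate, and since both forms are even and $\mf k$-invariant, so that they vanish between distinct, mutually non-isomorphic $\mf k$-types $W_j$ (Theorem~\ref{ThDecW}) and $\ip{\cdot\,,\cdot}$ restricts to a non-degenerate pairing on each finite-dimensional $W_j$, there is a unique even operator $T\in\End(W)$ with $b(f,g) = \ip{Tf, g}$ for all $f,g$. Combining the skew-supersymmetry of $b$ with that of $\ip{\cdot\,,\cdot}$ gives, for all $X\in\mf g$ and $f,g\in W$,
\begin{gather*}
\ip{T\pi(X)f, g} = b(\pi(X)f, g) = -(-1)^{|X||f|}\ip{Tf, \pi(X)g} = \ip{\pi(X)Tf, g},
\end{gather*}
so by non-degeneracy $T$ commutes with the $\mf g$-action. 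As $W$ is an irreducible $\mf g$-module of countable dimension over the algebraically closed field $\C$, Schur's lemma (in Dixmier's form) yields $\End_{\mf g}(W) = \C$, hence $T = c\,\mathrm{id}$ for some $c\in\C$, i.e., $b = c\,\ip{\cdot\,,\cdot}$.

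Finally I would pin down $c$ on the cyclic vector $v := \exp(-2|X|)\in W$. By Lemma~\ref{Lemma SB constant} we have $\SB v = 1$, so $b(v,v) = \bfip{1,1} = 1$. On the other hand $v$ is a real radial superfunction, whence $\overline{v} = v$ and $v^2 = \exp(-4|X|)$, so by the normalisation of $\int_W$,
\begin{gather*}
\ip{v, v} = \int_W v\,\overline{v} = \int_W \exp(-4|X|) = 1.
\end{gather*}
Therefore $c = 1$ and $b = \ip{\cdot\,,\cdot}$, which is the claim. The one delicate point is the Schur step: one must check that $T$ is a genuinely well-defined \emph{even} $\mf g$-endomorphism, which is where the finiteness of the $\mf k$-types and the non-degeneracy of $\ip{\cdot\,,\cdot}$ on each of them enter; the remaining steps are routine sign bookkeeping.
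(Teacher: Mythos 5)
Your strategy is genuinely different from the paper's. The paper first proves the identity for the cyclic vector $v=\exp(-2|X|)$ paired against an \emph{arbitrary} $g\in W$, by an explicit computation: using Lemma~\ref{Lemma SB constant} and superhermitianity, $\bfip{1,\SB g}=\bfipbar{\SB g,1}$ unwinds to $\int_W\exp(-2x_0)\overline{g}$ because the operators $\B_0\big(x|\mbessel(z)\big)$ and $\exp\big({-}\mbessel(z_0)\big)$ act trivially on the constant $1$; the general case $f=\pi(Y)v$, $Y\in U(\mf g)$, is then reduced to this one by cyclicity and invariance of both forms. You instead invoke an abstract uniqueness principle for invariant sesquilinear forms on the irreducible module $W$, which lets you get away with the single evaluation $b(v,v)=\ip{v,v}=1$; that evaluation, and the verification that $b$ is $\pi$-skew-supersymmetric (via Theorem~\ref{ThIP} and Proposition~\ref{PropSkewSymRho}), are correct. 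What the paper's route buys is that it never needs any completeness/orthogonality structure of the $\mf k$-type decomposition; what your route would buy, if completed, is a cleaner conceptual statement (uniqueness of the invariant form) with the analysis confined to one normalisation constant.

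The genuine gap is in the step that produces $T$: you assert that both forms vanish between distinct blocks $W_i$, $W_j$ because these are ``mutually non-isomorphic $\mf k$-types (Theorem~\ref{ThDecW})''. First, Theorem~\ref{ThDecW} does not state that the $W_j$ are irreducible $\mf k$-modules, nor that they are pairwise non-isomorphic; an explicit $\mf k$-module identification is only given there for $m$ even. Second, and more fundamentally, for \emph{sesquilinear} invariant forms non-isomorphism is not the right hypothesis: an invariant pairing between irreducibles $V_1$, $V_2$ induces an intertwiner from $V_1$ to the \emph{conjugate dual} of $V_2$, so what must be excluded is $V_1\cong\overline{V_2}^{\,*}$, not $V_1\cong V_2$; since neither form here is positive definite, this cannot be brushed aside as in the compact/unitary setting. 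The repair is available inside the paper. The even element $Z=(e_0,0,-e_0)\in\mf k$ satisfies $\rho(Z)=-\imath(M-2+2\E)$, so it acts on $F_k$ by the distinct, purely imaginary scalars $-\imath(M-2+2k)$; for any $\rho$-skew-supersymmetric sesquilinear form $\gamma$, applying invariance with this $Z$ gives $(M-2+2k)\gamma(p,q)=(M-2+2l)\gamma(p,q)$ for $p\in F_k$, $q\in F_l$, hence $\gamma(F_k,F_l)=0$ for $k\neq l$. So run your argument with the blocks $\widetilde W_k:=\SB^{-1}(F_k)$ (finite-dimensional, $\mf k$-stable, spanning $W$ by Proposition~\ref{Prop inverse SB transform}): orthogonality of $b$ between blocks is then exactly Proposition~\ref{PropOrthog}, orthogonality of $\ip{\cdot\,,\cdot}$ follows from the eigenvalue argument applied to $\pi(Z)=\SB^{-1}\rho(Z)\SB$, and non-degeneracy of $\ip{\cdot\,,\cdot}$ on each block follows from global non-degeneracy plus this orthogonality (likewise for $\bfip{\cdot\,,\cdot}$, using Proposition~\ref{nondeg}, valid since $M\geq 4$). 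With that, $T$ exists blockwise and is even, and you can even avoid Dixmier's form of Schur's lemma: $T$ preserves the finite-dimensional block $\widetilde W_0$, hence has an eigenvector there, and $\ker(T-c)$ is a nonzero $\mf g$-submodule of the irreducible module $W$, so $T=c\,\mathrm{id}$ and $c=1$ by your normalisation.
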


\begin{proof}
We first look at the case $f = \exp(-2|X|)$. Because of Lemma \ref{Lemma SB constant} and the superhermitian property of the Bessel--Fischer product we have
\begin{gather*}
\bfip{\SB(\exp(-2|X|)), \SB g} = \bfip{1, \SB g} = \bfipbar{\SB g, 1} = \SB(\overline{g(x)})\big(\mbessel\big)1\big|_{z=0}\\
 \hphantom{\bfip{\SB(\exp(-2|X|)), \SB g}}{} = \int_W \exp(-2x_0) \big( \B_0(x|\mbessel(z))\exp\big({-}\mbessel(z_0)\big) 1 \big) \overline{g(x)} \big|_{z=0},
\end{gather*}
for all $g\in W$. Here $\B_0\big(x|\mbessel(z)\big)$ and $\exp\big({-}\mbessel(z_0)\big)$ should be considered as infinite power sums of the Bessel operator with
\begin{gather*}
x|\mbessel(z) := 2x_0\mbessel(z_0)+2\sum_{i,j=1}^{m+2n-1}x_i\beta^{ij}\mbessel(z_j)=2\sum_{i,j=0}^{m+2n-1}x_i\beta^{ij}\bessel(z_j).
\end{gather*}
Since they act on a constant with respect to the variable $z$ we get
\begin{gather*}
\B_0\big(x|\mbessel(z)\big)\exp\big({-}\mbessel(z_0)\big)1 =\B_0(0)\exp(0)1=1,
\end{gather*}
which gives
\begin{gather*}
\bfip{\SB(\exp(-2|X|)), \SB g} = \int_W \exp(-2x_0)\overline{g(x)} = \ip{\exp(-2|X|), g},
\end{gather*}
if we use equation~(\ref{EqXx_0}). Now suppose $f, g\in W$. Since $W$ is an irreducible $\mf g$-module (Theorem~\ref{ThDecW}), there exists a $Y\in U(\mf g)$ such that $f= \pi(Y)\exp(-2|X|)$. Therefore we can reduce the general case to the previous case using the intertwining property (Theorem \ref{ThIP}) and the fact that the sesquilinear forms are skew symmetric for~$\pi$ and~$\rho$ (Propositions~\ref{PropSkewSymPi} and~\ref{PropSkewSymRho}):
\begin{gather*}
\bfip{\SB f, \SB g} = \bfip{\SB(\pi(Y)\exp(-2|X|)), \SB g} = \bfip{\rho(Y)\SB(\exp(-2|X|)), \SB g}\\
\hphantom{\bfip{\SB f, \SB g}}{} = - \bfip{\SB(\exp(-2|X|)), \rho(Y)\SB g} = -\bfip{\SB(\exp(-2|X|)), \SB (\pi(Y)g)}\\
\hphantom{\bfip{\SB f, \SB g}}{}= -\ip{\exp(-2|X|), \pi(Y)g} = \ip{\pi(Y)\exp(-2|X|), g} = \ip{f,g},
\end{gather*}
which proves the theorem.
\end{proof}

\subsection{The inverse Segal--Bargmann transform}

\begin{Definition}\label{Def SB inv}
For $p\in \mc P\big(\C^{m|2n}\big)$ the \textit{inverse Segal--Bargmann transform} is defined as
\begin{gather*}
\iSB p(x) := \exp(-2|X|)\B_0\big(x|\mbessel(z)\big)\exp\big({-}\mbessel(z_0)\big)p(z)\big|_{z=0},
\end{gather*}
with
\begin{gather*}
x|\mbessel(z) := 2x_0\mbessel(z_0)+2\sum_{i,j=1}^{m+2n-1}x_i\beta^{ij}\mbessel(z_j)=2\sum_{i,j=0}^{m+2n-1}x_i\beta^{ij}\bessel(z_j).
\end{gather*}
\end{Definition}

Note that both $\B_0\big(x|\mbessel(z)\big)$ and $\exp\big({-}\mbessel(z_0)\big)$ are infinite power sums of the Bessel operator. However, they are well defined operators on polynomials in $z$ since the Bessel operator lowers the degree of the polynomials and therefore the power operators become zero after a finite number of terms. Thus $\iSB$ is a well-defined operator on $\mc P\big(\C^{m|2n}\big)$. Because of Proposition~\ref{PropTangBes} it maps $\big\langle R^2 \big\rangle $ to zero and thus $\iSB$ can be restricted to~$F$. Moreover, it is also well defined as the inverse of the Segal--Bargmann transform. This follows from the following proposition.

\begin{Proposition}\label{PropInv}The inverse Segal--Bargmann transform is well defined as the inverse of the Segal--Bargmann transform defined in Definition~{\rm \ref{DefSB}}.
\end{Proposition}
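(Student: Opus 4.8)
By Proposition~\ref{Prop inverse SB transform} the Segal--Bargmann transform $\SB\colon W\to F$ is a bijection; denote its set-theoretic inverse by $T\colon F\to W$. To prove the statement it suffices to show that $\iSB$ restricted to $F$ coincides with $T$. Since we have already shown that $\SB$ preserves the sesquilinear forms (Theorem~\ref{PropUnitSB}) and that $\ip{\cdot\,,\cdot}$ is non-degenerate on $W$ (Section~\ref{SSIntC}), the plan is to characterise $T$ as the adjoint of $\SB$ with respect to the two forms and then to recognise $\iSB$ as precisely this adjoint. Concretely, I would prove the kernel identity
\begin{gather*}
\ip{f,\iSB p} = \bfip{\SB f, p}\qquad\text{for all } f\in W,\ p\in F,
\end{gather*}
and then combine it with unitarity and non-degeneracy.

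First I would check that the left-hand side even makes sense: unwinding Definition~\ref{Def SB inv}, the factor $\B_0(x|\mbessel(z))\exp({-}\mbessel(z_0))p(z)|_{z=0}$ is a genuine polynomial in $x$, since the Bessel operators lower the $z$-degree and hence only finitely many terms survive. Thus $\iSB p = \exp(-2|X|)\,q(x)$ for some polynomial $q$, so $\iSB p\in W$. Next I would establish the kernel identity itself. Expanding the right-hand side by the definition of the Bessel--Fischer product gives $\bfip{\SB f,p} = (\SB f)(\mbessel)\,\overline p(z)|_{z=0}$; substituting the integral expression for $\SB f$ from Definition~\ref{DefSB} and replacing each $z_i$ by $\mbessel(z_i)$ turns this into $\exp({-}\mbessel(z_0))\int_W \B_0(x|\mbessel(z))\exp(-2x_0)f(x)\,\overline p(z)\big|_{z=0}$. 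Interchanging the (finite) Bessel evaluation at $z=0$ with the integral $\int_W$, using the supercommutativity of the Bessel operators so that $\exp({-}\mbessel(z_0))$ commutes with $\B_0(x|\mbessel(z))$, and using $|X|=x_0$ on the domain of integration, one recovers $\int_W f\,\overline{\iSB p} = \ip{f,\iSB p}$. This is essentially the manipulation already carried out for the generator $\exp(-2|X|)$ inside the proof of Theorem~\ref{PropUnitSB}.

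Finally I would conclude. Given $p\in F$, write $p=\SB g$ with $g=Tp\in W$; then unitarity yields $\bfip{\SB f,p}=\bfip{\SB f,\SB g}=\ip{f,g}=\ip{f,Tp}$, so together with the kernel identity $\ip{f,\iSB p}=\ip{f,Tp}$ for every $f\in W$. Non-degeneracy of $\ip{\cdot\,,\cdot}$ on $W$ then forces $\iSB p=Tp$, and since $p$ was arbitrary, $\iSB=T$ on $F$. I expect the main obstacle to be the rigorous justification of the kernel identity: one must justify interchanging $\int_W$ with the substitution $z_i\mapsto\mbessel(z_i)$ followed by evaluation at $z=0$, and carefully track both the coefficient-conjugations entering $\overline p$ and the Koszul signs produced when the odd part of $f(x)$ is moved past the Bessel operators acting in the $z$-variables. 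Everything else is bookkeeping built on results already available.
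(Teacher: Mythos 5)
Your proposal is correct and follows essentially the same route as the paper: the paper likewise takes the inverse $A$ guaranteed by Proposition~\ref{Prop inverse SB transform}, uses unitarity (Theorem~\ref{PropUnitSB}) together with superhermitianity to write $\ip{Ap,\psi}=(-1)^{|p||\psi|}\bfipbar{\SB\psi,p}$, expands this via the integral formula for $\SB$ and interchanges $\int_W$ with the Bessel-operator evaluation at $z=0$ to recognise $\ip{\iSB p,\psi}$, and concludes by non-degeneracy of $\ip{\cdot\,,\cdot}$. The only cosmetic difference is that you put $\SB f$ directly into the first slot of the Bessel--Fischer product (your kernel identity) where the paper instead flips $\bfip{p,\SB\psi}$ by superhermitianity before expanding; the underlying computation, including the interchange and sign bookkeeping you flag, is identical.
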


\begin{proof} From Proposition \ref{Prop inverse SB transform}, we know that $\SB$ has an inverse.
Suppose the operator $A$ is this inverse. Using Theorem \ref{PropUnitSB} we then have the following calculation:
\begin{align*}
\ip{A p,\psi} &= \bfip{\SB(A p),\SB\psi} = (-1)^{|p||\psi|}\bfipbar{\SB\psi,p} = (-1)^{|p||\psi|} \overline{\SB\psi}\big(\mbessel\big)p(z)\big|_{z=0}\\
&= (-1)^{|p||\psi|} \left.\left(\exp\big({-}\mbessel(z_0)\big)\int_W \B_0\big(x|\mbessel(z)\big)\exp(-2x_0)\overline{\psi(x)}p(z)\right)\right|_{z=0}\\
&= \int_W \big(\exp\big({-}\mbessel(z_0)\big) \B_0\big(x|\mbessel(z)\big)\exp(-2x_0)p(z)\big)\big|_{z=0}\overline{\psi(x)}\\
&= \big\langle \exp\big({-}\mbessel(z_0)\big) \B_0\big(x|\mbessel(z)\big)\exp(-2|X|)p(z)\big|_{z=0},\psi\big\rangle_W,
\end{align*}
where we used equation~(\ref{EqXx_0}) in the last step. Since the sesquilinear form $\ip{\cdot\, , \cdot}$ is non-degenerate we obtain $A=\SB^{-1}$.
\end{proof}

We can make the inverse Segal--Bargmann transform more explicit on the space of homogeneous polynomials. To the best of our knowledge, this explicit expression is also new for the bosonic case.

\begin{Proposition}\label{PropExplicitInv}
For $p\in \mc P_k\big(\C^{m|2n}\big)$, $k\in \N$ the inverse Segal--Bargmann transform $\iSB$ is given by
\begin{gather*}
\iSB p(x) = \exp(-2|X|) \sum_{j=0}^k \dfrac{(-1)^j}{j!(k-j)!}\dfrac{\Gamma\big(\frac{M}{2}-1\big)}{\Gamma\big(k-j+\frac{M}{2}-1\big)}\big\langle z_0^j(x|z)^{k-j}, \overline{p}\big\rangle.	
\end{gather*}
\end{Proposition}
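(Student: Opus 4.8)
The plan is to start directly from Definition~\ref{Def SB inv}, abbreviate $B := x|\mbessel(z)$ and $C := \mbessel(z_0)$, and rewrite
\begin{gather*}
\iSB p(x) = \exp(-2|X|)\,\B_0(B)\exp(-C)\,p(z)\big|_{z=0}.
\end{gather*}
Each summand of $B$ pairs variables of equal parity, so $B$ and $C$ are \emph{even} operators, and the power series $\B_0(B) = \Gamma\big(\tfrac{M}{2}-1\big)\sum_l \tfrac{1}{l!\Gamma(l+\frac{M}{2}-1)}B^l$ and $\exp(-C) = \sum_i \tfrac{(-1)^i}{i!}C^i$ are legitimate; I would then expand their product termwise.

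Next comes a degree count. As used in the proof of Proposition~\ref{PropOrthog}, every Bessel operator lowers the $z$-degree of a homogeneous polynomial by one, hence so do $B$ and $C$, and $B^lC^i$ lowers it by $l+i$. Applying this to $p\in\mc P_k\big(\C^{m|2n}\big)$ and evaluating at $z=0$ kills all terms except those with $l+i=k$. Setting $i=j$ and $l=k-j$ yields
\begin{gather*}
\B_0(B)\exp(-C)p(z)\big|_{z=0} = \sum_{j=0}^k \frac{(-1)^j}{j!(k-j)!}\,\frac{\Gamma\big(\frac{M}{2}-1\big)}{\Gamma\big(k-j+\frac{M}{2}-1\big)}\,B^{k-j}C^j p(z)\big|_{z=0},
\end{gather*}
which already produces the exact scalar prefactors appearing in the statement.

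The actual content is the third step: identifying $B^{k-j}C^j p(z)|_{z=0}$ with a Bessel--Fischer product. Since $\bfip{q,\bar p} = q(\mbessel)\,\overline{\bar p}(z)|_{z=0}$ and conjugating coefficients twice gives $\overline{\bar p}=p$, one has $\bfip{z_0^j(x|z)^{k-j},\bar p} = \big(z_0^j(x|z)^{k-j}\big)(\mbessel)\,p(z)|_{z=0}$, so it suffices to show $\big(z_0^j(x|z)^{k-j}\big)(\mbessel) = B^{k-j}C^j$. Because the Bessel operators supercommute with exactly the signs of the coordinates, the substitution $z_i\mapsto\mbessel(z_i)$ extends to an algebra homomorphism on $\mc P\big(\C^{m|2n}\big)$; the elements $z_0$ and $x|z$ are even and mutually commuting and are sent to $C$ and $B$ respectively, so $z_0^j(x|z)^{k-j}$ is sent to $C^jB^{k-j}$. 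Finally $[B,C]=0$, since $\mbessel(z_0)$ commutes with every $\mbessel(z_j)$ and with the scalar coefficients $x_i$, whence $C^jB^{k-j}=B^{k-j}C^j$ and the identification is complete.

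The main obstacle is not conceptual but bookkeeping: one must verify that the substitution $q\mapsto q(\mbessel)$ really is multiplicative on the even subalgebra generated by $z_0$ and $x|z$, so that no spurious signs arise when expanding $(x|z)^{k-j}$ or when commuting odd coefficients $x_i$ past the $z$-Bessel operators, and that the double conjugation $\overline{\bar p}=p$ and the commutation $[B,C]=0$ are handled carefully. Once these are in place, combining the three steps and factoring out the common $\exp(-2|X|)$ gives precisely the asserted formula.
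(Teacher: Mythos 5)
Your proposal is correct and takes essentially the same approach as the paper: the paper likewise rewrites $\iSB p(x)$ as $\exp(-2|X|)\bfip{\B_0(x|z)\exp(-z_0),\overline{p}}$, then invokes orthogonality of the Bessel--Fischer product (which rests on exactly the degree-lowering fact you use) to isolate the total-degree-$k$ part of the kernel and read off the coefficients. The only differences are cosmetic --- you perform the degree count on the operator series before identifying it with the Bessel--Fischer product, whereas the paper converts first and counts degrees second, and your explicit verifications of $[B,C]=0$ and of the multiplicativity of $q\mapsto q\big(\mbessel\big)$ spell out steps the paper leaves implicit.
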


\begin{proof}
For $p\in \mc P_k\big(\C^{m|2n}\big)$ we have
\begin{align*}
\iSB p(x) &= \exp(-2|X|)\B_0\big(x|\mbessel(z)\big)\exp\big({-}\mbessel(z_0)\big)p(z)\big|_{z=0}\\
&= \exp(-2|X|)\bfip{\B_0(x|z)\exp(-z_0), \overline{p}}.
\end{align*}
Because of the orthogonality of the Bessel--Fischer product we only need to look at the homogeneous polynomial term of degree $k$ in the expansion of $\B_0(x|z)\exp(-z_0)$. Explicitly, we need the homogeneous term of degree $k$ in
\begin{gather*}
\Gamma\left(\dfrac{M}{2}-1\right)\sum_{l=0}^\infty \dfrac{1}{l! \Gamma\big(l+\frac{M}{2}-1\big)}(x|z)^l\sum_{j=0}^\infty \dfrac{(-1)^j}{j!}z_0^j,
\end{gather*}
which is
\begin{gather*}
\Gamma\left(\frac{M}{2}-1\right)\sum_{j=0}^k \dfrac{1}{(k-j)!\Gamma\big(k-j+\frac{M}{2}-1\big)}(x|z)^{k-j} \dfrac{(-1)^j}{j!}z_0^j
\end{gather*}
as desired.
\end{proof}

\subsection{The generalized Hermite functions}
As a standard application of the Segal--Bargmann transform, we can construct generalized Hermite functions which extend the ones of the bosonic case given in~\cite{HKMO}.

\begin{Definition}
The \textit{generalized Hermite functions on $W$} are defined by
\begin{gather*}
h_\alpha(x) := \exp(2|X|)\left(\dfrac{1}{2}\mbessel\right)^\alpha\exp(-4|X|),
\qquad \text{with} \quad
\left(\dfrac{1}{2}\mbessel\right)^\alpha := \prod_i \dfrac{1}{2^{\alpha_i}}\mbessel(e_i)^{\alpha_i},
\end{gather*}
for $\alpha\in \N^{m|2n}$. The \textit{generalized Hermite polynomials} $H_\alpha$ are defined by the equation
\begin{gather*}
h_\alpha(x) = H_\alpha(x)\exp(-2|X|).
\end{gather*}
\end{Definition}

\begin{Proposition}[Hermite to monomial property] We have
\begin{gather*}
\SB h_\alpha = (2z)^\alpha.
\end{gather*}
\end{Proposition}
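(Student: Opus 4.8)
The plan is to reduce the statement to a single ``creation operator'' identity and then induct on $|\alpha|$. First I would record that inside $\int_W$ one has $|X|=x_0$ by \eqref{EqXx_0}, so that the factor $\exp(-2x_0)\exp(2|X|)$ appearing after inserting the definition of $h_\alpha$ into $\SB h_\alpha$ collapses to $1$ modulo $\big\langle R^2\big\rangle$. Writing $\mathcal A_l := \exp(2|X|)\circ\tfrac12\mbessel(x_l)\circ\exp(-2|X|)$ for the conjugated modified Bessel operator, the definition of $h_\alpha$ telescopes into $h_\alpha=\mathcal A^\alpha\exp(-2|X|)$ (the inner $\exp(-2|X|)\exp(2|X|)$ pairs cancel), and since $h_0=\exp(-2|X|)$ satisfies $\SB h_0=1$ by Lemma~\ref{Lemma SB constant}, it suffices to prove the operator identity $\SB\circ\mathcal A_l=(2z_l)\circ\SB$ on $W$, where the right-hand side is multiplication by $2z_l$ in $F$. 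Feeding this into the telescoping then yields $\SB h_\alpha=(2z)^\alpha$ by a straightforward induction.

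For the creation-operator identity I would compute, for $f\in W$,
\[
\SB(\mathcal A_l f)(z)=\exp(-z_0)\int_W \B_0(x|z)\,\tfrac12\mbessel(x_l)\big(\exp(-2|X|)f(x)\big),
\]
again using $\exp(-2x_0)\exp(2|X|)=1$ in the integrand. The key move is an integration by parts transferring $\tfrac12\mbessel(x_l)$ from $\exp(-2|X|)f$ onto the kernel $\B_0(x|z)$; this self-adjointness of the Bessel operator with respect to $\int_W$ is exactly the mechanism underlying the skew-supersymmetry of $\pi$ in Proposition~\ref{PropSkewSymPi}. Since $\B_0(x|z)=\sum_k \tfrac{1}{k!}\big(\tfrac{M}{2}-1\big)_k^{-1}(x|z)^k$ is even, a computation identical to the one in the proof of Lemma~\ref{Lemma rep kernel} (now with $x$ the active and $z$ the passive variable) gives $\mbessel(x_l)(x|z)^k\equiv 4z_l\,k\big(k+\tfrac{M}{2}-2\big)(x|z)^{k-1}$ modulo $\big\langle R^2_z\big\rangle$, and after resumming the series this is precisely $\tfrac12\mbessel(x_l)\B_0(x|z)\equiv 2z_l\,\B_0(x|z)$. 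The remaining integral $\exp(-z_0)\int_W \B_0(x|z)\exp(-2|X|)f(x)$ is $\SB f(z)$, which closes the identity.

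The main obstacle is the super-sign and well-definedness bookkeeping rather than any conceptual difficulty. Two points require care. First, the integrand $\B_0(x|z)\exp(-2x_0)f(x)$ is not itself an element of $W$, so the integration-by-parts step and the manipulations modulo $\big\langle R^2\big\rangle$ must be justified within the larger class of integrands for which $\int_W$ was shown to converge in the well-definedness proposition for $\SB$. Secondly, for odd indices $l$ one must verify that the signs produced by the integration by parts, by the supercommutativity of the Bessel operators, and by commuting the scalars $2z_l$ to the front of the integral all cancel, so that iterating $\mathcal A_l$ reproduces $(2z)^\alpha$ with the ordering convention fixed in the definitions of $\big(\tfrac12\mbessel\big)^\alpha$ and $(2z)^\alpha$; since $\B_0(x|z)$ is even these signs are governed solely by the parities $|l|$ and collapse in the expected way.
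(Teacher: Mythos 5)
Your proposal is correct and follows essentially the same route as the paper: both rest on the supersymmetry of $\int_W$ with respect to the Bessel operators, the eigenfunction identity $\mbessel(x_l)\B_0(x|z)=4z_l\B_0(x|z)$, and the normalisation $\SB(\exp(-2|X|))=1$ of Lemma~\ref{Lemma SB constant}. The only difference is organisational: you induct on $|\alpha|$ via the conjugated creation-operator identity $\SB\circ\mathcal{A}_l=(2z_l)\circ\SB$, whereas the paper transfers all the Bessel operators onto the kernel $\B_0(x|z)$ at once in a single chain of equalities.
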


\begin{proof}
We will use the fact that $\int_W$ is supersymmetric with respect to the Bessel operators \cite[Proposition 8.9]{BF}:
\begin{gather*}
\int_W (\bessel(x_k)f)g = (-1)^{|f||k|} \int_W f(\bessel(x_k)g).
\end{gather*}

Combining this with equation~\eqref{EqB0I} of Lemma \ref{Properties of B_0} we find
\begin{align*}
\SB h_\alpha(z) &= \exp(-z_0)\int_W \B_0(x|z)\exp(-2x_0)h_\alpha(x)\\
&= \exp(-z_0)\int_W \B_0(x|z)\prod_i \dfrac{1}{2^{\alpha_i}}\mbessel(x_i)^{\alpha_i}\exp(-4|X|)\\
&= \exp(-z_0)\int_W \prod_i \dfrac{1}{2^{\alpha_i}}\mbessel(x_i)^{\alpha_i}\left(\B_0(x|z)\right)\exp(-4|X|)\\
&= \exp(-z_0)\int_W \prod_i\dfrac{1}{2^{\alpha_i}} (4z_i)^{\alpha_i}\left(\B_0(x|z)\right)\exp(-4|X|)\\
&= \exp(-z_0)\int_W (2z)^\alpha\B_0(x|z)\exp(-4|X|)\\
&= (2z)^\alpha \SB(\exp(-2|X|))(z).
\end{align*}
Because of Lemma \ref{Lemma SB constant} the theorem follows.
\end{proof}

\appendix
\section{Special functions}\label{Special functions}

\subsection{Bessel functions}\label{AppBes}
The I-Bessel function $I_\alpha(t)$ (or modified Bessel function of the first kind) is defined by
\begin{gather*}
I_\alpha(t) := \left(\dfrac{t}{2}\right)^{\alpha}\sum_{k=0}^\infty \dfrac{1}{k!\Gamma(k+\alpha+1)}\left(\dfrac{t}{2}\right)^{2k}
\end{gather*}
and the K-Bessel function $K_\alpha$ (or modified Bessel function of the third kind) by
\begin{gather*}
K_\alpha(t) := \dfrac{\pi}{2\sin(\pi \alpha)}(I_{-\alpha}(t)-I_\alpha(t)),
\end{gather*}
for $\alpha, t\in\C$, see \cite[Section~4.12]{AAR}. In this paper we will need the following renormalisations
\begin{gather*}
\widetilde I_\alpha(t) := \left(\dfrac{t}{2}\right)^{-\alpha} I_\alpha(t), \qquad \widetilde K_\alpha(t) := \left(\dfrac{t}{2}\right)^{-\alpha} K_\alpha(t).
\end{gather*}
Remark that we have the following special case \cite[equation~(4.12.4)]{AAR}
\begin{gather} \label{Eq Bessel is exp}
\widetilde K_{-\frac{1}{2}}(t) = \frac{\sqrt{\pi}}{2}\exp(-t).
\end{gather}

The asymptotic behaviour of the I-Bessel function can be deducted from equations (4.12.7) and (4.12.8) of \cite{AAR},
\begin{gather*}
\widetilde I_\alpha(t) = \dfrac{1}{\sqrt{2\pi}}\big({-}t^2\big)^{-\frac{2\alpha+1}{4}}\left(\exp\left(-i\left(\dfrac{(2\alpha+1)\pi}{4}-\sqrt{-t^2}\right)\right)\left(1+\mc O\left(\dfrac{1}{t}\right)\right)\right.\\
 \left. \hphantom{\widetilde I_\alpha(t) =}{} + \exp\left(i\left(\dfrac{(2\alpha+1)\pi}{4}-\sqrt{-t^2}\right)\right)\left(1+\mc O\left(\dfrac{1}{t}\right)\right)\right),
\end{gather*}
for $|t|\rightarrow +\infty$. The asymptotic behaviour of the K-Bessel function for $t\in \R$ is given in Appendix~B.2 of~\cite{BF},
\begin{gather*}
\mbox{for }t\rightarrow 0\colon \quad \widetilde K_\alpha(t) = \begin{cases}
\displaystyle \frac{\Gamma(\alpha)}{2}\left(\frac{t}{2}\right)^{-2\alpha} + o\big(t^{-2\alpha}\big) & \mbox{if }\alpha >0,\vspace{1mm}\\
\displaystyle -\log\left(\frac{t}{2}\right)+o\left(\log\left(\frac{t}{2}\right)\right) & \mbox{if }\alpha =0,\vspace{1mm}\\
\displaystyle \frac{\Gamma(-\alpha)}{2}+o(1) & \mbox{if }\alpha <0,
\end{cases}\\
\mbox{for }t\rightarrow +\infty\colon \quad \widetilde K_\alpha(t) = \dfrac{\sqrt{\pi}}{2}\left(\dfrac{t}{2}\right)^{-\alpha-\frac{1}{2}}e^{-t}\left(1+\mc O\left(\dfrac{1}{t}\right)\right).
\end{gather*}

\subsection{Generalised Laguerre functions}\label{AppLag}
Consider the generating function
\begin{gather*}
G_2^{\mu,\nu}(t,x) := \dfrac{1}{(1-t)^{\frac{\mu+\nu+2}{2}}}\widetilde I_{\frac{\mu}{2}}\left(\dfrac{tx}{1-t}\right)\widetilde K_{\frac{\nu}{2}}\left(\dfrac{x}{1-t}\right),
\end{gather*}
for complex parameters $\mu$ and $\nu$. The generalised Laguerre functions $\Lambda_{2,j}^{\mu,\nu}(x)$ are defined in~\cite{HKMM} as the coefficients in the expansion
\begin{gather*}
G_2^{\mu,\nu}(t,x) = \sum_{j=0}^{\infty}\Lambda_{2,j}^{\mu,\nu}(x)t^j.
\end{gather*}

\section{Proof of Theorem \ref{ThIP}}\label{AppIP}

To give the proof of Theorem \ref{ThIP} we first need a few technical lemmas.

\begin{Lemma}[properties of $\B_0$]\label{Properties of B_0} For $k\in\{1, \ldots, m+2n-1\}$ we have
\begin{alignat}{3} \label{EqZI}
& \pt {z^k} \B_0(x|z)= 2x_k \B_1(x|z), \qquad && \pt {x^k} \B_0(x|z)= 2z_k \B_1(x|z),&\\
\label{EqEI}
& \pt {z^0} \B_0(x|z)= -2x_0 \B_1(x|z), \qquad && \pt {x^0} \B_0(x|z)= -2z_0 \B_1(x|z),&\\
 & \E_z \B_0(x|z)= (x|z)\B_1(x|z), \qquad && \E_x \B_0(x|z)= (x|z)\B_1(x|z),&\nonumber
\end{alignat}
and for $i\in\{0, \ldots, m+2n-1\}$ we have
\begin{gather}\label{EqB0I}
\mbessel(z_i)\B_0(x|z) = 4x_i\B_0(x|z), \qquad \mbessel(x_i)\B_0(x|z) = 4z_i\B_0(x|z).
\end{gather}
The last equation expresses that $\B_0(x|z)$ is an eigenfunction of $\mbessel$.
\end{Lemma}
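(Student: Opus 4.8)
The plan is to derive every identity from two ingredients: a chain rule for the radial superfunction $\B_0(x|z)$ and a single three--term recurrence for the one--variable function $\B_\alpha$. Since the argument $x|z$ is an even superfunction (its soul is a sum of products $x_iz_j$ of two odd generators) and $\B_0$ is built from an entire function through \eqref{EqDefRadSup2} with $\partial_t^j\B_0=\B_j$, the defining Taylor expansion gives, for any homogeneous derivation $D$ in the $x$-- or $z$--variables, the chain rule $D\,\B_\alpha(x|z)=\B_{\alpha+1}(x|z)\,D(x|z)$; as $\B_{\alpha+1}(x|z)$ is even its position is immaterial. Thus everything reduces to computing $D(x|z)$ for the relevant $D$.

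First I compute $\pt{z^k}(x|z)$ algebraically. Writing $\pt{z^k}=\sum_c\beta_{kc}\,\pt{z_c}$ and using $\pt{z_c}(z_d)=\delta_{cd}$ together with the block form of $\beta$, the contraction $\sum_j\beta^{ij}\beta_{kj}=(-1)^{|k|}\delta_{ik}$ (which uses the supersymmetry of both $\beta$ and $\beta^{-1}$) yields $\pt{z^k}(x|z)=2x_k$ for $k\geq 1$ and $\pt{z^0}(x|z)=-2x_0$, the sign at $0$ being exactly the discrepancy between $x|z$ and $\sum_{i,j}x_i\beta^{ij}z_j$. Feeding these into the chain rule proves \eqref{EqZI} and \eqref{EqEI}. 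For the two Euler identities I argue instead that $\E_z$ measures the $z$--degree and that $x|z$ is $z$--homogeneous of degree one, so $\E_z\B_0(x|z)=(x|z)\,\B_0'(x|z)=(x|z)\B_1(x|z)$; the $x$--statement is identical once one notes $x|z=z|x$, a one--line reordering via supersymmetry of $\beta$.

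The substantial identity is \eqref{EqB0I}. I will first record the recurrence
\[
t\,\B_2(t)+\left(\frac{M}{2}-1\right)\B_1(t)=\B_0(t),
\]
read off coefficientwise from $\B_\alpha(t)=\Gamma(\frac{M}{2}-1)\sum_l t^l/\big(l!\,\Gamma(l+\frac{M}{2}-1+\alpha)\big)$; equivalently it is the differential equation $t\B_0''+(\frac{M}{2}-1)\B_0'=\B_0$ satisfied by $\B_0$. Next I compute the Laplacian: applying the chain rule twice gives $\pt{z^a}\pt{z^b}\B_0(x|z)=(-1)^{|a||b|}\,\pt{z^b}(x|z)\,\pt{z^a}(x|z)\,\B_2(x|z)$, and the contraction $\sum_{a,b}\beta^{ab}(-1)^{|a||b|}\,\pt{z^b}(x|z)\,\pt{z^a}(x|z)=4R^2_x$ collapses the sum to $\Delta_z\B_0(x|z)=4R^2_x\,\B_2(x|z)$. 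Writing $\pt{z^i}(x|z)=2\epsilon_i x_i$ and $\mbessel(z_i)=\epsilon_i\bessel(z_i)$ with $\epsilon_0=-1$, $\epsilon_{i}=1$ for $i\geq 1$, I assemble
\[
\bessel(z_i)\B_0=(M-2+2\E_z)\big(2\epsilon_i x_i\B_1\big)-z_i\,4R^2_x\B_2=4\epsilon_i x_i\B_0-4R^2_x z_i\B_2,
\]
where the recurrence converts $(M-2)\B_1+2(x|z)\B_2$ into $2\B_0$. Multiplying by $\epsilon_i$ gives $\mbessel(z_i)\B_0=4x_i\B_0-4\epsilon_i R^2_x z_i\B_2$, whose remainder lies in $\langle R^2\rangle$; hence \eqref{EqB0I} holds modulo $\langle R^2\rangle$, the convention in force throughout. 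The $\bessel(x_i)$ statement follows from the $x\leftrightarrow z$ symmetry, its remainder then lying in $\langle R^2_z\rangle$.

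The main obstacle is the graded sign bookkeeping: establishing the two contractions $\sum_j\beta^{ij}\beta_{kj}=(-1)^{|k|}\delta_{ik}$ and $\sum_{a,b}\beta^{ab}(-1)^{|a||b|}\,\pt{z^b}(x|z)\,\pt{z^a}(x|z)=4R^2_x$ correctly, since both rely on the supersymmetry (not plain symmetry) of $\beta$ and $\beta^{-1}$ and on the anticommutativity of the odd generators, which I would verify against the standardized block form of $\beta$. By contrast the $\B$--recurrence is routine once the series is written out, and the $R^2_x$ remainder in \eqref{EqB0I} is exactly what one expects, mirroring the $R^2_w$ term already encountered in Lemma~\ref{Lemma rep kernel}.
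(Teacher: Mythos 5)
Your proof is correct, and for the first-order identities \eqref{EqZI}, \eqref{EqEI} and the Euler relations it coincides with the paper's route (chain rule for radial superfunctions, then the definition of the Euler operators); your sign contractions $\sum_j\beta^{ij}\beta_{kj}=(-1)^{|k|}\delta_{ik}$ and $\sum_{a,b}\beta^{ab}(-1)^{|a||b|}\pt{z^b}(x|z)\pt{z^a}(x|z)=4R^2_x$ do check out against the standardized block form of $\beta$. Where you genuinely diverge is \eqref{EqB0I}. The paper quotes the monomial identity $\mbessel(z_k)(x|z)^l=4x_kl\big(l+\frac{M}{2}-2\big)(x|z)^{l-1}$ from the proof of Lemma~\ref{Lemma rep kernel} (where it is derived modulo the square of the radial coordinate in the second set of variables) and then resums the defining series of $\B_0$ with an index shift, the work being done by the recursion $\Gamma\big(l+\frac{M}{2}-1\big)=\big(l+\frac{M}{2}-2\big)\Gamma\big(l+\frac{M}{2}-2\big)$; you instead stay at the level of the function, computing $\Delta_z\B_0(x|z)=4R^2_x\B_2(x|z)$ by the chain rule and collapsing the Bessel operator with the recurrence $t\B_2+\big(\frac{M}{2}-1\big)\B_1=\B_0$, which is precisely the coefficientwise content of that same Gamma recursion. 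The two computations are equivalent in substance, but your organization buys one real improvement: it exhibits the exact remainder term (proportional to $R^2_xz_i\B_2(x|z)$) and thereby makes explicit that \eqref{EqB0I} holds modulo $\big\langle R^2_x\big\rangle$ for $\mbessel(z_i)$, respectively modulo $\big\langle R^2_z\big\rangle$ for $\mbessel(x_i)$, rather than as an exact identity of superfunctions --- a caveat that the paper's statement and proof leave implicit. Your reading is the right one, and it is exactly what the later applications require: the $z$-identity is only ever used under $\int_W$, which annihilates $\big\langle R^2_x\big\rangle$, and the $x$-identity is used for images in $F=\mc P\big(\C^{m|2n}\big)/\big\langle R^2\big\rangle$.
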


\begin{proof}
Equation~\eqref{EqZI} follows from the chain rule. Equation \eqref{EqEI} follows immediately from equation~\eqref{EqZI} and the definition of the Euler operator.
Using the same calculation as in the proof of Lemma \ref{Lemma rep kernel}, we obtain \begin{gather*}
\mbessel(z_k)(x|z)^l=4x_kl\left(l+\frac{M}{2}-2\right)(x|z)^{l-1}.
\end{gather*}
We then find
\begin{align*}
\mbessel(z_k)\B_0(x|z) &= \Gamma\left(\dfrac{M}{2}-1\right)\sum_{l=0}^\infty \dfrac{1}{l!\,\Gamma\left(l+\frac{M}{2}-1\right)}\mbessel(z_k)(x|z)^l,\\
 &= 4x_k\Gamma\left(\dfrac{M}{2}-1\right)\sum_{l=1}^\infty \dfrac{l\left(l+\frac{M}{2}-2\right)}{l!\,\Gamma\left(l+\frac{M}{2}-1\right)}(x|z)^{l-1}\\
&= 4x_k\Gamma\left(\dfrac{M}{2}-1\right)\sum_{l-1=0}^\infty \dfrac{1}{(l-1)!\,\Gamma\left((l-1)+\frac{M}{2}-1\right)}(x|z)^{l-1}\\
&= 4x_k\B_0(x|z).
\end{align*}
The calculations for the case $\mbessel(x_k)\B_0(x|z)$ is analogous.
\end{proof}

\begin{Lemma}[properties of $\exp$] For $k\in \{1, \ldots, m+2n-1\}$ we have
\begin{alignat}{3}\label{EqEexp}
& \E_z \exp(-z_0)=-z_0\exp(-z_0), \qquad && \E_x \exp(-2x_0)= -2x_0\exp(-2x_0),&\\\label{EqBzkexp}
&\mbessel(z_k)\exp(-z_0)= z_k \exp(-z_0), \qquad && \mbessel(x_k)\exp(-2x_0)= 4x_k\exp(-2x_0),&\\
&\label{EqBz0exp}
\mbessel(z_0)\exp(-z_0)=(2-M+z_0)\exp(-z_0).\quad &&&
\end{alignat}
\end{Lemma}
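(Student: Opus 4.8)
The plan is to reduce all five identities to three elementary computations on the purely even exponentials $\exp(-z_0)$ and $\exp(-2x_0)$: the action of $\pt k$, of $\Delta$, and of $\E$. Since neither function involves odd variables, the radial-superfunction construction of equation~\eqref{EqDefRadSup2} collapses to the ordinary power series $\exp(-z_0)=\sum_{j\geq0}(-z_0)^j/j!$, so no Grassmann subtleties arise and one may differentiate termwise. The two columns (in $z$ and in $x$) are identical up to the factor coming from the exponent, so I would carry out the $z$-computation in full and read off the $x$-version.

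First I would record that the Euler operator simplifies. Writing $\pt j=\sum_i\partial^i\beta_{ji}$ and using the inverse relation $\sum_j\beta^{ij}\beta_{jk}=\delta_{ik}$ gives $\E=\sum_{ij}\beta^{ij}x_i\pt j=\sum_i x_i\partial^i$. Because $\exp(-z_0)$ depends only on $z_0$, only the $i=0$ term survives and $\partial^0\exp(-z_0)=-\exp(-z_0)$, whence $\E_z\exp(-z_0)=-z_0\exp(-z_0)$; the $x$-case is the same with the factor $2$ from the exponent, giving both identities in~\eqref{EqEexp}.

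Next, with the standardized metric ($\beta_{00}=-1$, $\beta_{i0}=0$ for $i\neq0$, $\beta^{00}=-1$) I would compute $\pt j\exp(-z_0)=-\beta_{j0}\exp(-z_0)=\delta_{j0}\exp(-z_0)$, and hence $\Delta\exp(-z_0)=\sum_{ij}\beta^{ij}\pt i\pt j\exp(-z_0)=\beta^{00}\exp(-z_0)=-\exp(-z_0)$. Substituting into $\bessel(z_k)=(M-2+2\E)\pt k-z_k\Delta$ (recall $\lambda=2-M$, so $-\lambda=M-2$) makes the first term vanish for $k\in\{1,\dots,m+2n-1\}$, since there $\pt k\exp(-z_0)=0$, leaving $\bessel(z_k)\exp(-z_0)=z_k\exp(-z_0)$; as $\mbessel(z_k)=\bessel(z_k)$ this is the first half of~\eqref{EqBzkexp}, while the analogous $x$-computation produces the factor $4$ (from $\Delta_x\exp(-2x_0)=-4\exp(-2x_0)$), giving the second half.

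The only genuinely delicate identity is~\eqref{EqBz0exp}, where the main point to watch is the extra sign from $\mbessel(z_0)=-\bessel(z_0)$. Now $\pt 0\exp(-z_0)=\exp(-z_0)$ does not vanish, so both terms of $\bessel(z_0)$ contribute: $(M-2+2\E)\pt 0\exp(-z_0)=(M-2-2z_0)\exp(-z_0)$ and $-z_0\Delta\exp(-z_0)=z_0\exp(-z_0)$, summing to $(M-2-z_0)\exp(-z_0)$; applying the minus sign from the modified operator yields $\mbessel(z_0)\exp(-z_0)=(2-M+z_0)\exp(-z_0)$, as claimed. The whole argument is a short direct calculation, and the only real hazard is the sign bookkeeping stemming from $\beta_{00}=-1$ together with the defining sign of $\mbessel(z_0)$.
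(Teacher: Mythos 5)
Your proof is correct and takes essentially the same route as the paper's: a direct computation of $\pt k$, $\Delta$, and $\E$ applied to $\exp(-z_0)$ and $\exp(-2x_0)$ using the standardized metric, with the sign from $\mbessel(z_0)=-\bessel(z_0)$ accounted for at the end. The paper's proof is simply a terser inline version of the same calculation, so there is nothing to fix.
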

\begin{proof} Equation \eqref{EqEexp} is immediate. For equation~\eqref{EqBz0exp} we have
\begin{align*}
\mbessel(z_0)\exp(-z_0) &= (2-M-2\E)\pt 0\exp(-z_0) + z_0\Delta\exp(-z_0)\\
&= (2-M-2\E)\exp(-z_0) - z_0\pt 0\pt 0 \exp(-z_0)\\
&= (2-M+2z_0)\exp(-z_0) - z_0 \exp(-z_0)\\
&= (2-M+z_0)\exp(-z_0),
\end{align*}
while for equation~\eqref{EqBzkexp} we compute
\begin{gather*}
\mbessel(z_k)\exp(-z_0) = 0 - z_k\Delta\exp(-z_0)= z_k\pt 0\pt 0 \exp(-z_0) = z_k \exp(-z_0).
\end{gather*}
A similar calculation shows $\mbessel(x_k)\exp(-2x_0) = 4x_k\exp(-2x_0)$.
\end{proof}

\begin{Lemma}[properties of $\SB$] We have
\begin{gather}\label{EqEuler}
\E_z \SB f(z) = -z_0\SB f(z)+ \int_W \left(x|z\right)\B_1(x|z)\exp(-z_0-2x_0)f(x),\\
\dfrac{1}{2}\mbessel(z_0)\SB f(z) = \dfrac{1}{2}(2-M+z_0)\SB f(z) + 2\SB(x_0 f)(z)\nonumber\\
\hphantom{\dfrac{1}{2}\mbessel(z_0)\SB f(z) =}{} - \int_W (x|z )\B_1(x|z)\exp(-z_0-2x_0)f(x),\label{EqBessel0}\\
\dfrac{1}{2}\mbessel(z_k)\SB f(z) = \dfrac{1}{2}z_k\SB f(z) + 2\SB(x_k f)(z)\nonumber\\
\hphantom{\dfrac{1}{2}\mbessel(z_k)\SB f(z) =}{} - 2\int_W (z_0x_k+z_kx_0 )\B_1(x|z)\exp(-z_0-2x_0)f(x),\label{EqBesselk}\\
\label{EqImpuls}
L_{0k}^z \SB f(z) = -z_k\SB f(z) + 2\int_W \left(z_0x_k+z_kx_0\right)\B_1(x|z)\exp(-z_0-2x_0)f(x).
\end{gather}
\end{Lemma}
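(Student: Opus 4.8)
The plan is to move each $z$-operator inside the integral $\int_W$ (which integrates only over the $x$-variables) and let it act on the integrand, exploiting that $\SB f(z)=\exp(-z_0)\int_W\B_0(x|z)\exp(-2x_0)f(x)$ is the product of $\exp(-z_0)$ with an $x$-integral whose entire $z$-dependence sits in $\B_0(x|z)$. For the first-order operators $\E_z$ and $L_{0k}^z$ I would simply apply the Leibniz rule to the product $\exp(-z_0)\cdot\int_W(\cdots)$, differentiate under the integral sign, and substitute the values $\E_z\B_0(x|z)=(x|z)\B_1(x|z)$, $\pt{z^k}\B_0(x|z)=2x_k\B_1(x|z)$ and $\pt{z^0}\B_0(x|z)=-2x_0\B_1(x|z)$ from Lemma \ref{Properties of B_0}, together with $\E_z\exp(-z_0)=-z_0\exp(-z_0)$ from \eqref{EqEexp}. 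This yields \eqref{EqEuler} at once, and \eqref{EqImpuls} after writing $L_{0k}^z=z_0\pt{z^k}-z_k\pt{z^0}$, so that $L_{0k}^z\B_0(x|z)=2(z_0x_k+z_kx_0)\B_1(x|z)$ while $L_{0k}^z\exp(-z_0)=-z_k\exp(-z_0)$.

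For the two Bessel identities \eqref{EqBessel0} and \eqref{EqBesselk} the operator $\mbessel(z_i)$ is no longer a derivation, so I would instead invoke the product rule of Proposition \ref{PropProdRule} (applied in the $z$-variables) to the product $\exp(-z_0)\cdot\int_W\B_0(x|z)\exp(-2x_0)f(x)$, with $\phi=\exp(-z_0)$ even and $\psi$ the integral. This produces five terms: the two pure terms $\bessel(z_i)(\exp(-z_0))\,\psi$ and $\exp(-z_0)\,\bessel(z_i)(\psi)$, plus three cross terms involving $\E$ and $\pt{i}$ of each factor. The pure terms are evaluated from $\mbessel(z_k)\exp(-z_0)=z_k\exp(-z_0)$ and $\mbessel(z_0)\exp(-z_0)=(2-M+z_0)\exp(-z_0)$ in \eqref{EqBzkexp}--\eqref{EqBz0exp}, together with the eigenfunction property $\mbessel(z_i)\B_0(x|z)=4x_i\B_0(x|z)$ of \eqref{EqB0I}; the latter is precisely what turns $\exp(-z_0)\int_W 4x_i\B_0\exp(-2x_0)f$ into $4\SB(x_if)(z)$. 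The three cross terms are then computed from the same derivative formulas \eqref{EqZI}--\eqref{EqEI} used above.

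The main bookkeeping obstacle, and the step I expect to require the most care, is tracking the signs in the Bessel cases. First, $\mbessel(z_0)=-\bessel(z_0)$, so the $i=0$ computation must be run for $\bessel(z_0)$ and negated at the end. Second, the last cross term in Proposition \ref{PropProdRule} carries $\sum_{r,s}\beta^{rs}\pt{z^r}(\phi)\pt{z^s}(\psi)$, and since $\pt{z^r}\exp(-z_0)$ is nonzero only for $r=0$, where $\beta^{00}=-1$ and $\pt{z^0}\exp(-z_0)=\exp(-z_0)$, this term enters with a sign that must be matched against the contribution of $2\E(\exp(-z_0))\pt{i}(\psi)$. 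In the $i=0$ case these two contributions produce $\pm4\int_W z_0x_0\B_1\exp(-z_0-2x_0)f$ and cancel, leaving the $(x|z)\B_1$ integral of \eqref{EqBessel0} coming from $2\pt{z^0}(\exp(-z_0))\E(\psi)$; in the $i=k$ case ($k\geq1$) the term $2\pt{z^k}(\exp(-z_0))\E(\psi)$ vanishes and the two surviving cross terms combine into the single integral of $(z_0x_k+z_kx_0)\B_1$ in \eqref{EqBesselk}. Finally I would divide the resulting expressions for $\mbessel(z_0)\SB f$ and $\mbessel(z_k)\SB f$ by $2$ to recover the stated normalization.
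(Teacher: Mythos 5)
Your proposal is correct and follows essentially the same route as the paper's proof: the Leibniz rule combined with Lemma~\ref{Properties of B_0} and the exponential identities for the first-order operators $\E_z$ and $L_{0k}^z$, and Proposition~\ref{PropProdRule} applied to $\exp(-z_0)$ times the $\B_0$-factor (running the $i=0$ case through $\bessel(z_0)=-\mbessel(z_0)$) for \eqref{EqBessel0} and \eqref{EqBesselk}. Your sign bookkeeping matches the paper's computation exactly, including the cancellation of the two $4z_0x_0\B_1$ cross terms in the $i=0$ case and the merging of the two surviving cross terms into $(z_0x_k+z_kx_0)\B_1$ in the $i=k$ case.
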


\begin{proof}Equation (\ref{EqEuler}) follows from~(\ref{EqEI}) and~(\ref{EqEexp}). To prove equation~(\ref{EqBessel0}) we
 apply the product rule (Proposition~\ref{PropProdRule}) to $-\mathcal{B}(z_0) (\exp(-z_0)\B_0(x|z))$ and substituting equations \eqref{EqB0I}, \eqref{EqBz0exp}, and the following easily verified identities
\begin{gather*}
\E_z(\exp(-z_0))\pt {z^0}(\B_0(x|z)) = 2z_0x_0\exp(-z_0)\B_1(x|z),\\
 \pt {z^0} (\exp(-z_0))\E_z(\B_0(x|z)) = \left(x|z\right)\exp(-z_0)\B_1(x|z),\\
z_0\sum_{r,s}\beta^{rs}\pt {z^r}(\exp(-z_0))\pt {z^s}(\B_0(x|z)) = 2z_0x_0\exp(-z_0)\B_1(x|z).
\end{gather*}
Equation (\ref{EqBesselk}) follows in the same way using equations (\ref{EqB0I}), (\ref{EqBzkexp}), and
\begin{gather*}
\E_z(\exp(-z_0))\pt {z^k}(\B_0(x|z)) =- 2z_0x_k\exp(-z_0)\B_1(x|z),\\
 \pt {z^k} (\exp(-z_0))\E_z(\B_0(x|z)) = 0,\\
z_k\sum_{r,s}\beta^{rs}\pt {z^r}(\exp(-z_0))\pt {z^s}(\B_0(x|z)) = 2z_kx_0\exp(-z_0)\B_1(x|z),
\end{gather*}
while equation~(\ref{EqImpuls}) follows immediately from
\begin{gather*}
L_{0k}^z (\exp(-z_0)\B_0(x|z) ) =L_{0k}^z (\exp(-z_0) )\B_0(x|z) + \exp(-z_0)L_{0k}^z (\B_0(x|z) )\\
\qquad{} = -z_k \exp(-z_0) \B_0(x|z) + \exp(-z_0) (z_0(2x_k\B_1(x|z))-z_k(-2x_0\B_1(x|z)) )\\
\qquad{} = -z_k\exp(-z_0) \B_0(x|z) + 2\exp(-z_0) (z_0x_k+z_kx_0 )\B_1(x|z).
\end{gather*}
This proves the lemma.
\end{proof}

We can now prove Theorem \ref{ThIP}. For convenience we restate it here.

\begin{Theorem}[intertwining property]For $M\geq 4$ the Segal--Bargmann transform intertwines the action $\pi$ on $W$ with the action $\rho$ on $F$, i.e.,
\begin{gather}\label{Eq Intertwining}
\SB\circ \pi(X) = \rho(X)\circ \SB,
\end{gather}
for all $X\in \mathfrak{g}$.
\end{Theorem}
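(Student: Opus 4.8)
The plan is to exploit that the relation \eqref{Eq Intertwining} is linear in $X$ and, since $\pi$ and $\rho$ are representations, stable under taking Lie brackets: if $\SB\circ\pi(X)=\rho(X)\circ\SB$ and $\SB\circ\pi(Y)=\rho(Y)\circ\SB$, then $\SB\circ\pi([X,Y])=\big(\rho(X)\rho(Y)-(-1)^{|X||Y|}\rho(Y)\rho(X)\big)\circ\SB=\rho([X,Y])\circ\SB$. Hence it suffices to verify the intertwining relation on a generating set of the Lie superalgebra $\mf g=J^-\oplus\mf{istr}(J)\oplus J^+$. The bracket $[x,u]=2L_{xu}+2[L_x,L_u]$ shows that $[J^+,J^-]$ already spans all of $\mf{istr}(J)$: taking $x,u$ among $e_0,e_i$ gives $[e_0^+,e_0^-]=2L_{e_0}$, $[e_k^+,e_0^-]+[e_0^+,e_k^-]=4L_{e_k}$, and $[e_i^+,e_j^-]=2\tilde\beta_{ij}L_{e_0}+2[L_{e_i},L_{e_j}]$, which produce every $L_{e_l}$ and every $[L_{e_i},L_{e_j}]$. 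Thus $J^-$ and $J^+$ generate $\mf g$, and I only need to check \eqref{Eq Intertwining} for $X=(e_l,0,0)$ and $X=(0,0,e_l)$; the entire inner structure subalgebra comes for free.

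For the generators in $J^-$ one has $\pi(e_l,0,0)=-2\imath x_l$, so the left-hand side of \eqref{Eq Intertwining} is $-2\imath\,\SB(x_l f)$. On the right-hand side $\rho(e_l,0,0)$ is an explicit combination of the multiplication operators $z_l$, the modified Bessel operators $\mbessel(z_l)$, the Euler operator $\E_z$ and the operators $L_{0k}^z$, each of which is evaluated on $\SB f$ by the four identities \eqref{EqEuler}, \eqref{EqBessel0}, \eqref{EqBesselk} and \eqref{EqImpuls}. Substituting these, the pure $\SB f$-terms cancel after a short arithmetic; the $\SB(x_l f)$-contributions assemble, after the overall factor $-\tfrac{\imath}{2}$, to exactly $-2\imath\,\SB(x_l f)$; and the two families of correction integrals $\int_W(x|z)\B_1(x|z)\exp(-z_0-2x_0)f$ and $\int_W(z_0x_k+z_kx_0)\B_1(x|z)\exp(-z_0-2x_0)f$ carried by those identities cancel in pairs. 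No input beyond the lemma is needed here.

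The generators in $J^+$ are the real work. Here $\pi(0,0,e_l)=-\tfrac{\imath}{2}\mbessel(x_l)$, so the left-hand side is $-\tfrac{\imath}{2}\SB(\mbessel(x_l)f)$, which the four identities do not evaluate. Running the same substitution on the right-hand side reduces the claim to an identity of the shape $\SB(\mbessel(x_l)f)=4z_l\,\SB f+4\,\SB(x_l f)-(\text{correction integrals})$, with an extra $2(2-M)\SB f$ shift when $l=0$. To prove this I would transfer the Bessel operator off $f$ and onto the kernel using the supersymmetry of $\int_W$ with respect to the Bessel operators, $\int_W(\bessel(x_l)g)h=(-1)^{|g||l|}\int_W g\,(\bessel(x_l)h)$ from \cite[Proposition~8.9]{BF}, and then expand $\bessel(x_l)\big(\B_0(x|z)\exp(-2x_0)\big)$ by the product rule of Proposition~\ref{PropProdRule}. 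The eigenfunction property $\mbessel(x_l)\B_0(x|z)=4z_l\B_0(x|z)$ from \eqref{EqB0I} together with the $\exp$-identities of Lemma~\ref{Properties of B_0} handle the two diagonal terms, while the three cross terms of the product rule, those involving $\E_x$, $\pt l$ and the contraction $\sum_{r,s}\beta^{rs}\pt r\pt s$, reproduce via \eqref{EqZI} exactly the correction integrals required on the right-hand side.

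The main obstacle is precisely this product-rule computation for the $J^+$ generators: it produces many terms whose signs, Pochhammer factors and dependence on whether $l=0$ or $l\geq 1$ must be tracked with care, and the delicate point is to see that every cross term reorganizes into one of the two correction integrals so that the books balance against the four $\SB$-identities. Everything else, namely the reduction to generators, the clean $J^-$ cancellation, and the recovery of $\mf{istr}(J)$ through brackets, is bookkeeping. This is why the verification, though conceptually transparent, is long enough to be relegated to the appendix.
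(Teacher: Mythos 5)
Your proposal is correct, and its computational core coincides with the paper's: the same four identities \eqref{EqEuler}, \eqref{EqBessel0}, \eqref{EqBesselk}, \eqref{EqImpuls}, the same clean cancellation for the $J^-$ generators, and the same strategy for the $J^+$ generators (move $\mbessel(x_l)$ onto the kernel via the supersymmetry of $\int_W$ with respect to the Bessel operators, then expand with the product rule of Proposition~\ref{PropProdRule} and the eigenfunction property \eqref{EqB0I}). Where you genuinely depart from the paper is the treatment of $\mf{istr}(J)$: the paper verifies \eqref{Eq Intertwining} case by case on a full spanning set of $\mf g$ (seven cases, of which three are the inner structure algebra), whereas you verify it only on $J^\pm$ and recover the cases $X=(0,L_{e_l},0)$ and $X=(0,[L_{e_i},L_{e_j}],0)$ for free from the bracket $[x,u]=2L_{xu}+2[L_x,L_u]$, using that both $\pi$ and $\rho$ are representations so that the intertwining relation is stable under supercommutators. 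Your bracket computations ($[e_0^+,e_0^-]=2L_{e_0}$, $[e_k^+,e_0^-]+[e_0^+,e_k^-]=4L_{e_k}$ since $[L_{e_0},L_{e_k}]=0$, and $[e_i^+,e_j^-]=2\tilde\beta_{ij}L_{e_0}+2[L_{e_i},L_{e_j}]$) are correct, so $J^+\oplus J^-$ does generate $\mf g$ and the reduction is sound. What this buys you: three fewer cases, and in particular you never need the auxiliary facts the paper invokes there, namely $\int_W(\E_x+M-2)f=0$ and the skew-supersymmetry of $\pi$ with respect to $\ip{\cdot\,,\cdot}$ (Proposition~\ref{PropSkewSymPi}, used in the paper's Case~5). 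What the paper's route buys: each case is self-contained and the $\mf{istr}(J)$ verifications are actually the easy ones, so the direct check serves as an independent consistency test of the formulas for $\rho$ rather than relying on $\rho=\pi_\C\circ c$ being a homomorphism. One small slip in your write-up: the identities $\mbessel(x_k)\exp(-2x_0)=4x_k\exp(-2x_0)$ etc.\ come from the lemma on properties of $\exp$, not from Lemma~\ref{Properties of B_0}; this does not affect the argument.
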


\begin{proof}
We will use the decomposition $\mathfrak{g}=J^-\oplus \mf{istr}(J) \oplus J^+$ to prove equation~\eqref{Eq Intertwining} case by case.

\textbf{Case 1:} $X=(e_0,0,0)$. We wish to prove
\begin{gather*}
\SB(2x_0f)(z)=\dfrac{1}{2}z_0\SB f(z)+\dfrac{1}{2}\mbessel(z_0)\SB f(z)+\left(\dfrac{M}{2}-1\right)\SB f(z) + \E_z \SB f(z).
\end{gather*}
If we substitute (\ref{EqEuler}) and (\ref{EqBessel0}) into the equation the result follows.

\textbf{Case 2:} $X=(e_k,0,0)$, $k\neq 0$. Substituting \eqref{EqBesselk} and \eqref{EqImpuls} in
\begin{gather*}
\SB(2x_kf)(z)=\dfrac{1}{2}z_k\SB f(z)+\dfrac{1}{2}\mbessel(z_k)\SB f(z)+L_{0k}^z \SB f(z)
\end{gather*}
proves equation~\eqref{Eq Intertwining} for $X=(e_k,0,0)$.

\textbf{Case 3:} $X=(0,L_{e_0},0)$. We wish to prove
\begin{gather*}
\dfrac{1}{2}\SB ((2-M) f )-\SB (\E_x f )(z)=\dfrac{1}{2}z_0\SB f(z)-\dfrac{1}{2}\mbessel(z_0)\SB f(z).
\end{gather*}
Using (\ref{EqBessel0}) this becomes
\begin{gather*}
\SB ((2-M) f ) =\SB (\E_x f )(z) -2\SB(x_0 f)(z) + \int_W (x|z )\B_1(x|z)\exp(-z_0-2x_0)f(x).
\end{gather*}
From the proof of \cite[Proposition~8.9]{BF} it follows that $\int_W(\E_x+M-2)f =0$ when $\int_W f$ is well defined. This gives
\begin{align*}
\SB ((2-M) f ) &= \exp(-z_0)\int_W (2-M)\B_0(x|z)\exp(-2x_0)f(x)\\
&= \exp(-z_0)\int_W \E_x (\B_0(x|z)\exp(-2x_0)f(x) )\\
&= \int_W (x|z )\B_1(x|z)\exp(-z_0-2x_0)f(x) - 2\SB(x_0 f)(z) + \SB (\E_x f )(z),
\end{align*}
where we used (\ref{EqEI}) and (\ref{EqEexp}) to obtain the last equality.

\textbf{Case 4:} $X=(0,L_{e_k},0)$, $k\neq 0$. In a similar fashion to (\ref{EqImpuls}), we find
\begin{align*}
-\SB\big(L_{0k}^x f\big)(z) &= -\exp(-z_0)\int_W \B_0(x|z)\exp(-2x_0)L_{0k}^xf(x)\\
&= \exp(-z_0)\int_W L_{0k}^x (\B_0(x|z)\exp(-2x_0) )f(x)\\
&= 2\int_W (x_0z_k+z_0x_k )\B_1(x|z)\exp(-z_0-2x_0)f(x) - 2\SB(x_k f)(z).
\end{align*}
Because of (\ref{EqBesselk}), this is equivalent to
\begin{gather*}
-\SB\big(L_{0k}^x f\big)(z)=\dfrac{1}{2}z_k\SB f(z)-\dfrac{1}{2}\mbessel(z_k)\SB f(z),
\end{gather*}
proving equation~\eqref{Eq Intertwining} for $X=(0,L_{e_k},0)$.

\textbf{Case 5:} $X=(0,[L_{e_i},L_{e_j}],0)$, $i,j\neq 0$. Using Proposition \ref{PropSkewSymPi}, we obtain
\begin{align*}
\SB\big(L_{ij}^x f\big)(z)&=\exp(-z_0)\int_W \B_0(x|z)\exp(-2x_0)L_{ij}^xf(x)\\
&=-\exp(-z_0)\int_W L_{ij}^x (\B_0(x|z)\exp(-2x_0) )f(x)\\
&= -2\exp(-z_0)\int_W \big(x_iz_j-(-1)^{|i||j|}x_jz_i\big)\B_1(x|z)\exp(-2x_0)f(x)\\
&= \exp(-z_0)\int_W L_{ij}^z (\B_0(x|z) )\exp(-2x_0)f(x)\\
&=L_{ij}^z\SB f(z),
\end{align*}
thus equation~\eqref{Eq Intertwining} holds.

\textbf{Case 6:} $X=(0,0,e_0)$. Using (\ref{EqEuler}) and (\ref{EqBessel0}), we can rewrite
\begin{gather*}
\dfrac{1}{2}z_0\SB f(z)+\dfrac{1}{2}\mbessel(z_0)\SB f(z)+\left(1-\dfrac{M}{2}\right)\SB f(z) - \E_z \SB f(z)
\end{gather*}
as
\begin{gather*}
2z_0\SB f(z) + 2\SB(x_0 f)(z)+(2-M)\SB f(z) - 2\int_W (x|z )\B_1(x|z)\exp(-z_0-2x_0)f(x).
\end{gather*}
In a similar fashion to (\ref{EqBessel0}), we find
\begin{gather*}
\dfrac{1}{2}\SB\big(\mbessel(x_0)f\big)(z) = \dfrac{1}{2}\exp(-z_0)\int_W \B_0(x|z)\exp(-2x_0)\mbessel(x_0)f(x)\\
\hphantom{\dfrac{1}{2}\SB\big(\mbessel(x_0)f\big)(z)}{} =\dfrac{1}{2}\exp(-z_0)\int_W \mbessel(x_0)\left(\B_0(x|z)\exp(-2x_0)\right)f(x)\\
\hphantom{\dfrac{1}{2}\SB\big(\mbessel(x_0)f\big)(z)}{} = 2z_0\SB f(z) + (2-M)\SB f(z) +2 \SB(x_0 f)(z) \\
\hphantom{\dfrac{1}{2}\SB\big(\mbessel(x_0)f\big)(z)=}{} -2\int_W (x|z )\B_1(x|z)\exp(-z_0-2x_0)f(x).
\end{gather*}
So we conclude that equation~\eqref{Eq Intertwining} holds for $X=(0,0,e_0)$.

\textbf{Case 7:} $X=(0,0,e_k)$, $k\neq 0$. To show
\begin{gather*}
\dfrac{1}{2}\SB\big(\mbessel(x_k)f\big)(z)=\dfrac{1}{2}z_k\SB f(z)+\dfrac{1}{2}\mbessel(z_k)\SB f(z)-L_{0k}^z \SB f(z),
\end{gather*}
we substitute (\ref{EqBesselk}) and (\ref{EqImpuls}) into the equation. So we obtain
\begin{gather*}
\dfrac{1}{2}\SB\big(\mbessel(x_k)f\big)(z) = 2z_k\SB f(z) + 2\SB(x_k f)(z)\\
\hphantom{\dfrac{1}{2}\SB\big(\mbessel(x_k)f\big)(z) =}{} - 4\int_W (z_0x_k+z_kx_0 )\B_1(x|z)\exp(-z_0-2x_0)f(x).
\end{gather*}
In a similar fashion to (\ref{EqBesselk}), we find
\begin{gather*}
\dfrac{1}{2}\SB\big(\mbessel(x_k)f\big)(z) = \dfrac{1}{2}\exp(-z_0)\int_W \B_0 (x|z )\exp(-2x_0)\mbessel(x_k)f(x)\\
\hphantom{\dfrac{1}{2}\SB\big(\mbessel(x_k)f\big)(z)}{} =\dfrac{1}{2}\exp(-z_0)\int_W \mbessel(x_k) (\B_0(x|z)\exp(-2x_0) )f(x)\\
\hphantom{\dfrac{1}{2}\SB\big(\mbessel(x_k)f\big)(z)}{} = 2z_k\SB f(z) + 2\SB(x_k f)(z)\\
\hphantom{\dfrac{1}{2}\SB\big(\mbessel(x_k)f\big)(z)=}{} - 4\int_W (z_0x_k+z_kx_0 )\B_1(x|z)\exp(-z_0-2x_0)f(x).
\end{gather*}
This proves the theorem.
\end{proof}

\subsection*{Acknowledgements}
SB is supported by a BOF Postdoctoral Fellowship from Ghent University.
HDB is supported by the Research Foundation Flanders (FWO) under Grant EOS 30889451.
The authors would like to thank the anonymous referees for carefully reading the paper and for their useful comments.

\pdfbookmark[1]{References}{ref}
\LastPageEnding

\end{document}